 \documentclass[10 pt]{article}
%         \usepackage{bbm, mathrsfs, enumerate, amssymb, fourier}
%       \usepackage{mathrsfs, enumerate, amssymb}
%          \usepackage[foot]{amsaddr}

%\documentclass[a4paper,oneside,11pt,reqno,final]{amsart}

% \documentclass[a4paper,11pt,twoside,reqno,final]{amsart}

%%%%%%%%PACKAGES - MISE EN PAGE%%%%%%%%%%%%
 \usepackage[latin1]{inputenc}
\usepackage{amsfonts,amssymb,amsmath,amsthm}
\usepackage{tikz}
\usepackage{fancybox}
 \usepackage{pstricks}
 \usepackage[headinclude,DIV13]{typearea}
 \usepackage{hyperref}
% \usepackage{mathrsfs,bbm,stmaryrd}
% \usepackage[color]{showkeys}
% \definecolor{refkey}{rgb}{1,0,0} 
% \definecolor{labelkey}{rgb}{0,0,1}

% \usepackage{geometry}
% \geometry{a4paper,portrait,left=2.8cm,right=2.8cm,top=2.5cm,bottom=1.5cm}

\usepackage{graphicx, psfrag}

 \usepackage{amsmath,amsfonts,amssymb}
 \usepackage{color}
% \usepackage{authblk}
%  \usepackage{setspace}
% \usepackage{amsthm}
% \usepackage{titlesec}

% \documentclass[11pt,a4paper]{article}
%  \addtolength{\voffset}{-1cm}
% \addtolength{\textheight}{2cm}
%  \addtolength{\hoffset}{-1.5cm}
% \addtolength{\textwidth}{2.2 cm} 
% \sloppy
% \usepackage[latin1]{inputenc}
% \usepackage{indentfirst}
% \usepackage{amsmath}
% \usepackage{amsfonts}
% \usepackage{amsthm}
% \usepackage{amssymb}
% \usepackage{graphics}
% \usepackage{graphicx}
% \usepackage{multicol}
% \usepackage{color}
% \usepackage{authblk}
% \usepackage{epsfig}

\newtheorem{rem}{Remark}
\newtheorem{lem}{Lemma}[section]
\newtheorem{pro}{Proposition}
\newtheorem{defi}{Definition}[section]

\newtheorem{theo}{Theorem}

\newtheorem{ass}{Assumption}

% \renewcommand\section{\@startsection {section}{1}{\z@}%
%  	{-3.5ex \@plus -1ex \@minus -.2ex}%
%  	{2.3ex \ at plus.2ex}%
%  	{\reset@font\Large\bfseries}}

% \titleformat{\section}
%   [runin]% style : hang, display, runin, leftmargin, ...
%   {\itshape\normalsize\mdseries}% fonte numÃ©ro + titre
%   {\thesection}% numÃ©ro
%   {1em}% espace entre le numÃ©ro et le titre
%   {}% fonte titre

% \titlespacing*{\subsubsection}

% \makeatletter
% 
%   \renewcommand\section{\@startsection {section}{1}{\parindent}%
%                                      {\medskipamount}%
%                                      {-10pt}%
%                                      {\normalsize\upshape\bfseries\mathversion{bold}}}
% 
%  \renewcommand\subsection{\@startsection {subsection}{2}{\parindent}%
%                                      {\medskipamount}%
%                                      {-10pt}%
%                                      {\subsection@shape}}
%   \def\subsection@shape{\normalsize\itshape}
%   \def\subsection@prefix{\upshape}
% 
% 
% \makeatother

\renewcommand{\P}{\mathbb{P}}
\newcommand{\R}{\mathbb{R}}

\newcommand{\E}{\mathbb{E}}

\newcommand{\N}{\mathbb{N}}
\newcommand{\Z}{\mathbb{Z}}

\newcommand{\eps}{\varepsilon}

\numberwithin{equation}{section}

\textheight = 660pt
\voffset=1 cm

\title{The interplay of two mutations in a population of varying size: a stochastic eco-evolutionary model for clonal interference}

\author{Sylvain Billiard
\thanks{Unité Evo-Eco-Paléo, UMR CNRS 8198, Université des Sciences et Technologies Lille, Cité scientifique,
59655 Villeneuve d'Ascq Cedex-France; E-mail:{\texttt{sylvain.billiard@univ-lille1.fr}}}, Charline Smadi
\thanks{Irstea, UR LISC, Laboratoire d'Ingénierie des Systèmes Complexes, 9 avenue Blaise Pascal-CS 20085, 63178 Aubière, France
 and Department of Statistics, University of Oxford, 1 South Park Road, Oxford OX1 3TG, UK; E-mail:{\texttt{charline.smadi@polytechnique.edu}}}}

 \begin{document}
 
\maketitle
 
\begin{abstract}
Clonal interference, competition between multiple co-occurring beneficial mutations, has a major role in adaptation of asexual populations. 
We provide a simple individual based stochastic model of clonal interference taking into account a wide variety of competitive interactions
which can be found in nature. 
In particular, we relax the classical assumption of transitivity between the different mutations.
It allows us to predict genetic patterns, such as coexistence of several mutants or emergence of Rock-Paper-Scissors cycles, which were not explained by
existing models. In addition, we call into questions some classical preconceived ideas about fixation time and fixation probability of competing mutations.

\end{abstract}

\noindent\emph{Key words:} Eco-Evolution; Clonal interference; Birth and Death processes; Lotka-Volterra Systems; Couplings; Population Genetics

\medskip
\noindent\emph{MSC 2010:} 92D25, 60J80, 60J27, 92D15, 92D10

% \numberwithin{equation}{section}

% \tableofcontents

\section*{Introduction}

From the works of the 'great trinity' of Fisher \cite{fisher1922dominance,fisher1931evolution}, 
Wright \cite{wright1931evolution} and Haldane \cite{haldane1927mathematical} the questions of fixation probability and fixation time of new 
beneficial mutations 
have been widely studied.
These are indeed fundamental questions if we aim at understanding how and how fast a population can adapt to a changing environment, 
the dynamics of genetic diversity, or the long term behaviour of ecological systems.

The first models of adaptation in asexual populations postulated that the beneficial mutations were rare enough for populations to evolve 
sequentially by rapid fixations of a positively selected mutation alternating with long periods with no mutation 
(see \cite{orr2005genetic} 
for a review of these models).
However, various empirical evidence \cite{de2006clonal,lang2013pervasive,maddamsetti2015adaptation}
show that in large asexual populations, several mutations can co-occur, which especially can lead to a competition between beneficial mutations. This phenomenon is known as 
clonal interference \cite{gerrish1998fate}, and has a major importance for adaptation of asexual populations, such as bacteria and other prokaryotes, yeasts and other fungi, or cancers.
Consequently in recent years there has been a growing interest in developing experimental studies and theoretical models to analyse clonal interference 
\cite{barton1995linkage,gerrish1998fate,de2006clonal,campos2004modelling,desai2007beneficial,lang2011genetic,lang2013pervasive}.

Most models investigating how clonal interference might affect the probability and time of fixation of beneficial mutations, and thus adaptation, made two important but limiting assumptions: 
first population sizes are constant and independent of the fitness of the individuals, and second fitnesses only depend on the type of the mutations and not on the state of the population, 
i.e. fitnesses are assumed transitive. However, as emphasized by Nowak and Sigmund \cite{nowak2004evolutionary}, there is a reciprocal feedback between adaptation and environmental changes 
because of ecological interactions between individuals, especially competition. Mutation with the highest fitness invades 
the population, but its fitness might depend on the density of the 
other mutations present in the population and might thus change during the course of adaptation. Thus, fitnesses are not necessarily transitive, i.e. selection can be frequency dependent, 
and phenomena such as cyclical dynamics or stable coexistence can occur. Interestingly, non-transitivity of fitnesses has been documented empirically in asexual 
populations \cite{paquin1983relative,kirkup2004antibiotic}.

In this paper we aim at providing and studying a simple individual based stochastic model taking into account the wide variety of competitive interactions
which can be found in nature. 
We show that type dependent competitive interactions are able to generate ecological patterns which are observed but not explained 
by conventional models.
In particular we relax the classical assumption of transitivity between the different mutations 
(see the discussion in Section \ref{discutransi})
and call into questions some classical preconceived ideas about clonal interference (see Propositions \ref{propspeedup}, \ref{propincreaseinvproba}
 and \ref{proannulationeffet}).
To model precisely the interactions between individuals we extend the model introduced in \cite{champagnat2006microscopic} where the author
only considered the occurrence of one mutation. 
The population dynamics, described in Section \ref{model}, is a multitype birth and death Markov process 
with density-dependent competition.
We reflect the carrying capacity of the underlying environment by a scaling parameter $K\in \N$ and state results in the limit for 
large $K$.

Such an eco-evolutionary approach has been introduced by Metz and coauthors \cite{metz1996adaptive} and has been made rigorous in the seminal paper of Fournier 
and M\'el\'eard \cite{fournier2004microscopic}. Then it has been developed by 
Champagnat, M\'el\'eard and coauthors (see
\cite{champagnat2006microscopic,
champagnat2011polymorphic,champagnat2014adaptation} and references therein) for 
the haploid asexual case, by Smadi \cite{smadi2014eco} for the haploid sexual case, and by Collet, M\'el\'eard and Metz \cite{collet2011rigorous} and Coron and coauthors \cite{coron2014stochastic,coron2013slow} for the diploid sexual case.

This work is the first one to study the impact of type dependent competition on clonal interference. 
By using couplings with birth and death processes and comparison with deterministic system, we are able to provide
a complete description of the possible population dynamics 
when two mutations are in competition. We show that a large variety of behaviours can be observed, but that in 
all cases, the dynamics consists in an alternation of long stochastic phases (with a duration 
of order $\log K$) and short phases (with a duration of order $1$) 
which can be approximated by deterministic processes.
In the case of Rock-Paper-Scissors dynamics, we can even have a non bounded number of such alternations (see Proposition \ref{prolizards}).
 The deterministic approximations will be either two-dimensional or three-dimensional Lotka-Volterra competitive systems.
Unlike the two-dimensional case where the final state is completely determined by the signs of the invasion fitnesses 
(defined in \eqref{deffitinv} and \eqref{deffitinv2}),
 the long time behaviour of a three dimensional competitive Lotka-Volterra system can depend on the initial state. 
Moreover, the flows of three dimensional Lotka-Volterra systems
do not necessarily converge to a stable equilibrium but can exhibit cyclical behaviours.
We will see that the time of appearance of the second mutation
is crucial to determine the final population state. For example, the dynamics described in Proposition \ref{prolizards} can only be obtained if the 
second mutation occurs when the fraction of the wild type-individuals in the population is small.
\\

In Sections \ref{model} and \ref{results} we present the model and the main results
of the paper. Sections \ref{firstphase} to \ref{thirdphase} are 
devoted to the study of the dynamics of stochastic and deterministic phases.
Section \ref{proofmainth} is dedicated to the proofs of the main results.
Finally in the Appendices we present technical results (\ref{knownresults}) and provide a complete description of the possible population
dynamics (\ref{completedescr}).

\section{Model}\label{model}

We consider an asexual haploid population and focus on one locus. The individuals may carry three distinct alleles $0$, $1$ or $2$,
and we denote by $\mathcal{E}:=\{0,1,2\}$ the type space.
The population process
\begin{align*}
N^K= (N^{K}(t), t \geq 0)=((N_{0}^{K}(t),N_{1}^{K}(t),N_{2}^{K}(t)), t \geq 0),
\end{align*}
where $N_{i}^{K}(t)$ denotes the number of $i$-individuals 
at time $t$ when the carrying capacity is $K$,
is a multitype birth and death process. The birth rate of $i$-individuals is
\begin{equation}\label{defbirthrate}
 b_i(n)=\beta_i n_i,
\end{equation}
where $\beta_i>0$ is the individual birth rate of $i$-individuals and $n=(n_0,n_1,n_2) \in \Z_+^3$ denotes the current state of the population.
An $i$-individual can die either from a natural death (rate $\delta_i>0$), or from type-dependent competition:
 the parameter $C_{i,j}> 0$ models the impact an 
individual of type $j$ (resp. $i$) has on an individual of type $i$, where $(i,j) \in \mathcal{E}^2$. 
The strength of the competition also depends on
the carrying capacity $K\in \N$, which is a measure of the total quantity of food or space available.
This 
results in the total death rate of individuals carrying the allele $i \in \mathcal{E}$:
\begin{equation}\label{deathrate}
 d^{K}_{i}(n) = \left( \delta_{i} +  \frac{C_{i,0}}{K}n_{0}+  \frac{C_{i,1}}{K}n_{1}+  \frac{C_{i,2}}{K}n_{2} \right) n_{i}.
\end{equation}

Let us now introduce the notions of invasion, final state and time of invasion that we will use in the sequel. 
We say that a type $i \in \mathcal{E}$ invades if there exist $\bar{n} \in \Z_+^3$, $\beta,V>0$ and $K \in \N$ such that
the following event holds:
\begin{equation}\label{definv} I(i,K,\bar{n},\beta,V):=\{ \forall \ \beta \log K < t < e^{VK},\ N^K_i(t)>\bar{n}K\}.  \end{equation}
When $K$ is large, it means that $i$-individuals represent a non negligible fraction of the population during a very long time. 
It is necessary to bound this duration 
(by $e^{VK}$) as a birth and death process with density dependent competition almost surely gets extinct in finite time.
To characterize the long time behaviour of the $0$-, $1$-, and $2$- population sizes, we introduce the following notion of final state set 
for $\bar{n} \in \Z_+^3$, $\eps,\beta,V>0$ and $K \in \N$:
\begin{equation}
\{\bar{n} \in FS(\eps,K,\beta,V)\}:=\Big\{\sup_{\beta\log K < t < e^{VK}}\|N^K(t)/K-\bar{n}\|\leq \eps  \Big\},
\end{equation}
where $\|.\|$ denotes the $L^1$-Norm on $\R^\mathcal{E}$.
Finally, we introduce the time needed for the rescaled population process $N^K/K$ to hit
a vicinity of $\bar{n} \in \Z_+^3$ and stay in the latter during an exponential time.
For $\eps,V>0$ and $K \in \N$:
\begin{equation}\label{definvtime}
 T_{FS(V)}^{(\eps,K)}(\bar{n}):=\inf\{s>0,\forall s < t < e^{VK},\|N^K(t)/K-\bar{n}\|\leq \eps  \}.
\end{equation}

As a quantity summarizing the advantage or disadvantage a mutant with allele type $i$ has in a $j$-population at equilibrium, 
we introduce the so-called invasion fitness $S_{i j}$ through
\begin{equation} \label{deffitinv}
 S_{ij} := \beta_{i} -\delta_{i} - C_{i,j}\bar{n}_{j},
\end{equation}
where the equilibrium density $\bar{n}_{i}$ is defined by
\begin{align*}
\bar{n}_{i}: =\frac{\beta_{i} -\delta_i}{C_{i,i}}.
\end{align*}
The role of the invasion fitness $S_{i j}$ and the 
definition of the equilibrium density $\bar{n}_{i}$ follow from the properties of the two-dimensional competitive Lotka-Volterra 
system:
\begin{equation} \label{S}
\left\{\begin{array}{ll} \dot{n}_i^{(z)}=(\beta_i-\delta_i-C_{i,i}n_i^{(z)}-C_{i,j}n_j^{(z)})n_i^{(z)},& n_i^{(z)}(0)=z_i,\\
        \dot{n}_j^{(z)}=(\beta_j-\delta_j-C_{j,i}n_i^{(z)}-C_{j,j}n_j^{(z)})n_j^{(z)},& n_j^{(z)}(0)=z_j,
       \end{array}
\right.
 \end{equation}
for $z=(z_i,z_j) \in \R_+^2$.
If we assume 
\begin{equation}\label{defnbara}
 \bar{n}_{i}>0,\quad \bar{n}_{j}>0,\quad \text{and} \quad S_{ji}<0<S_{ij},
\end{equation}
then $\bar{n}_{i}$ is the equilibrium size of a monomorphic $i$-population and 
the system \eqref{S} has a unique stable equilibrium $(\bar{n}_i,0)$ and two unstable steady states $(0,\bar{n}_j)$ and $(0,0)$.
Thanks to Theorem 2.1 p. 456 in \cite{ethiermarkov} we can prove that 
if $N_i^K(0)$ and $N_j^K(0)$ are of order $K$, $K$ is large and 
$N_k^K(0)=0$ for $k=\mathcal{E}\setminus \{i,j\}$, 
the process $(N_i^K/K,N_j^K/K)$ is close to the solution of 
\eqref{S} during any finite time interval (see \eqref{eq1lemmeA} for a precise statement).
The invasion 
fitness $S_{ij}$ corresponds to the \textit{per capita} initial growth rate of the mutant $i$ when it appears in a monomorphic 
population of individuals $j$ at their equilibrium size $\lfloor \bar{n}_jK\rfloor$.
Hence the dynamics of the type $i$ is very dependent on the properties of the system \eqref{S}. It is proven in 
\cite{champagnat2006microscopic} 
that under Condition \eqref{defnbara} one mutant $i$ appearing in a monomorphic $j$-population at its equilibrium size 
$\lfloor \bar{n}_j K \rfloor$
has a positive probability to fix.
More precisely, {for $\eps>0$, there exist two finite constants $\beta(\eps)$ and $V(\eps)$ such that
\begin{equation}\label{invichezj}
    \P\Big(  \bar{n}_i e_i \in FS(\eps,K,\beta(\eps),V(\eps)) \Big|
N^K(0)
=e_i+ \lfloor \bar{n}_jK \rfloor e_j\Big)=
\frac{S_{ij}}{\beta_i}+O_K(\eps),
\end{equation}
where $(e_{i}, i\in \mathcal{E} )$ is the canonical 
basis of $\R^\mathcal{E}$ and $O_K(\eps)$ is a function of $K$ and $\eps$ satisfying
\begin{equation}\label{defOKeps}
 \limsup_{K \to \infty}|O_K(\eps)|\leq c \eps,
\end{equation}
for a finite $c$. Similarly, 
\begin{equation}\label{noninvichezj}
 \lim_{K \to \infty} \P\Big(  \bar{n}_j e_j \in FS(\eps,K,\beta(\eps),V(\eps))  \Big|
N^K(0)
=e_i+ \lfloor \bar{n}_jK \rfloor e_j\Big)= 1-
\frac{S_{ij}}{\beta_i}+O_K(\eps).
\end{equation}
Moreover, the invasion time of the mutant $i$, as defined in \eqref{definvtime}, satisfies for a finite $c$
\begin{equation}\label{invtime}\P\Big( (1-c\eps)\frac{\log K}{{S}_{ij}}<T^{(\eps,K)}_{FS(V(\eps))}( \bar{n}_i e_i)<
(1+c\eps)\frac{\log K}{{S}_{ij}}\Big|N^K(0)
=e_i+ \lfloor \bar{n}_jK \rfloor e_j \Big)=\frac{S_{ij}}{\beta_i}+O_K(\eps).\end{equation}}

If we assume on the contrary
\begin{equation}\label{defnbara2}
 \bar{n}_{i}>0,\quad \bar{n}_{j}>0, \quad S_{ij}>0, \quad S_{ji}>0 \quad \text{and}\quad C_{i,i}C_{j,j}\neq C_{i,j}C_{j,i} ,
\end{equation}
then the system \eqref{S} has three unstable steady states $(\bar{n}_i,0)$, $(0,\bar{n}_j)$ and $(0,0)$ and 
a unique stable equilibrium $(\bar{n}_{ij}^{(i)},\bar{n}_{ij}^{(j)})$, where 
\begin{equation} \label{defnij}\bar{n}_{ij}^{(i)}=\frac{C_{j,j}(\beta_i-\delta_i)-C_{i,j}(\beta_j-\delta_j)}
{C_{i,i}C_{j,j}- C_{i,j}C_{j,i}} \quad \text{and} \quad
        \bar{n}_{ij}^{(j)}=\frac{C_{i,i}(\beta_j-\delta_j)-C_{j,i}(\beta_i-\delta_i)}
{C_{i,i}C_{j,j}- C_{i,j}C_{j,i}}.
 \end{equation}
It is proven in 
\cite{champagnat2011polymorphic} 
that under Condition \eqref{defnbara2} one mutant $i$ has a positive probability to invade then coexist with a $j$-population during 
a long time.
More precisely, for  $\eps>0$, there exist two finite constants $\beta(\eps)$ and $V(\eps)$ such that
\begin{equation*}
 \P\Big( \bar{n}_{ij}^{(i)} e_i+\bar{n}_{ij}^{(j)} e_j \in FS(\eps,K,\beta(\eps),V(\eps)) \Big|
N^K(0)
=e_i+ \lfloor \bar{n}_jK \rfloor e_j\Big)=
\frac{S_{ij}}{\beta_i}+O_K(\eps),
\end{equation*}
\begin{equation*}
\P\Big( \bar{n}_j e_j \in FS(\eps,K,\beta(\eps),V(\eps)) \Big|
N^K(0)
=e_i+ \lfloor \bar{n}_jK \rfloor e_j\Big)= 1-
\frac{S_{ij}}{\beta_i}+O_K(\eps).
\end{equation*}
Moreover, the invasion time of the mutant $i$, as defined in \eqref{definvtime}, satisfies for a finite $c$
\begin{equation*}\P\Big((1-c\eps)\frac{\log K}{{S}_{ij}}<T^{(\eps,K)}_{FS(V(\eps))}(\bar{n}_{ij}^{(i)} e_i+\bar{n}_{ij}^{(j)} e_j)<
(1+c\eps)\frac{\log K}{{S}_{ij}}\Big| N^K(0)
=e_i+ \lfloor \bar{n}_jK \rfloor e_j \Big)=\frac{S_{ij}}{\beta_i}+O_K(\eps).\end{equation*}

Analogously, the fate of a mutation of type $k$ occurring when the types $i$ and $j$ coexist in a population 
of large carrying capacity $K$, or the result of the competition between the three types of populations when they have all a 
size of order $K$
is very dependent on the properties of the three-dimensional competitive Lotka-Volterra system:
\begin{equation} \label{S2}
\left\{\begin{array}{ll}
	\dot{n}_i^{(z)}=(\beta_i-\delta_i-C_{i,i}n_i^{(z)}-C_{i,j}n_j^{(z)}-C_{i,k}n_k^{(z)})n_i^{(z)},& n_i^{(z)}(0)=z_i,\\
        \dot{n}_j^{(z)}=(\beta_j-\delta_j-C_{j,i}n_i^{(z)}-C_{j,j}n_j^{(z)}-C_{j,k}n_k^{(z)})n_j^{(z)},& n_j^{(z)}(0)=z_j,\\
	\dot{n}_k^{(z)}=(\beta_k-\delta_k-C_{k,i}n_i^{(z)}-C_{k,j}n_j^{(z)}-C_{k,k}n_k^{(z)})n_k^{(z)},& n_k^{(z)}(0)=z_k,
       \end{array}
\right.
 \end{equation}
where $z= (z_i,z_j,z_k) \in \R_+^3$.
 Similarly as in \eqref{deffitinv} we can define the invasion fitness of the type $k$ in a two-type $i/j$-population,
\begin{equation} \label{deffitinv2}
 S_{kij}= S_{kji} := \beta_{k} -\delta_{k} - C_{k,i}\bar{n}_{ij}^{(i)}- C_{k,j}\bar{n}_{ij}^{(j)},
\end{equation}
where we recall Definition \eqref{defnij}. It corresponds to the initial \textit{per capita} growth rate of an individual of type $k$ appearing in a large population 
in which the individuals of type $i$ and $j$ are at their coexisting equilibrium, 
$(\lfloor \bar{n}_{ij}^{(i)}K \rfloor ,\lfloor \bar{n}_{ij}^{(j)}K \rfloor)$.\\

The class of the three-dimensional competitive Lotka-Volterra sytems has been studied in detail by Zeeman and coauthors 
\cite{zeeman1993hopf,zeeman1998three,zeeman2003local}.
They exhibit much more variety than the two-dimensional ones, and we will recall the long time behaviour of their flows in Section \ref{section3dim}.
The complexity of the three-dimensional competitive Lotka-Volterra systems entails a broad variety of dynamics
in the case of two mutations occurring successively
in a population. In particular, depending on the population state when the second mutation occurs, the new type can be 
advantageous or not when rare: for example, we can have $S_{20}>0$ and $S_{210}<0$, which implies that if the mutant $2$ occurs in 
a $0$-population at equilibrium the $2$-population size has a positive probability to hit a non-negligible fraction of the population, whereas if it 
appears in a two-type $0/1$-population at its coexisting equilibrium it gets extinct immediately.\\

In the sequel, we assume that a mutant of type $1$ occurs in a population of type $0$ (also called wild type) at 
equilibrium at time $0$, and that the mutants $1$ are beneficial when rare. In other words we assume

\begin{ass}\label{asscondinitiale}
$$N^K(0)=\lfloor \bar{n}_0 K\rfloor e_0+e_1 \quad \text{and} \quad  S_{10}>0. $$
\end{ass}

\begin{figure}[h]
\centering
\includegraphics[width=10cm,height=5cm]{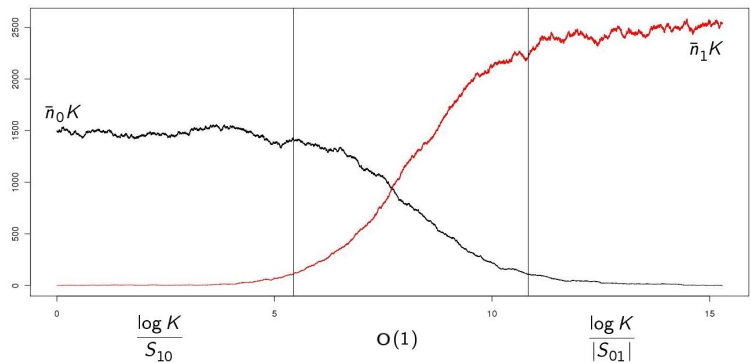}
\caption{The three phases of a mutant invasion. 
The y-axis corresponds to the two type population sizes ($0$ in black, $1$ in red), and the x-axis to the time.
In this simulation, 
$K=1000$, $(\beta_0, \beta_1) =(2,3), \delta_i=0.5, C_{i,j}=1, (i,j) \in \{0,1\}^2$.}
\label{figthreephases}
\end{figure}

Let us describe in few words what happens when there is only the first mutation (we refer to \cite{champagnat2006microscopic} for the proofs).
We can distinguish three phases in the mutant invasion (see Figure \ref{figthreephases}): an 
 initial phase in which the fraction of mutant-individuals does not exceed a fixed value $\eps>0$ and where the dynamics of the wild-type 
 population is nearly undisturbed by the invading type. A second phase where both types account for a non-negligible percentage of the 
 population and where the dynamics of the population can be well approximated by the system \eqref{S}. 
 And finally a third phase where either the roles of the types are interchanged and the wild-type population is near extinction 
 (when \eqref{defnbara} holds), 
or the two types stay close to their coexisting equilibrium (when \eqref{defnbara2} holds).
The duration of the invasion process is of order $\log K $ with a deterministic phase which only 
lasts an amount of time of order 1.
More precisely, during the first phase, when the population $1$ has a size smaller than $\lfloor\eps K \rfloor$, 
this latter can be approximated by a 
supercritical birth and death process, with respective individual birth and death rates 
$$ \beta_1 \quad  \text{ and } \quad \delta_1+\frac{C_{1,0}}{K} \bar{n}_0 K= \beta_1-S_{10}<\beta_1.$$
Hence from well known properties of supercritical birth and death processes (see \cite{MR2047480} for example) we know that 
with a probability close to $S_{10}/\beta_1$ the $1$-population size hits the value $\lfloor \eps K\rfloor$ in a time 
of order $\log K/S_{10}$ for large $K$. The second phase can be approximated 
by the dynamical system \eqref{S} with $(i,j)=(0,1)$ and the system takes a time of order $1$ to get close to 
$(N_0^K,N_1^K)=(\lfloor \bar{n}_{01}^{(0)}K\rfloor,\lfloor \bar{n}_{01}^{(1)}K\rfloor)$ (if $S_{01}>0$) or to 
$(N_0^K,N_1^K)=(\lfloor \eps^2K\rfloor,\lfloor\bar{n}_1K\rfloor)$ (if $S_{01}<0$).
Finally, if $S_{01}>0$ the populations $0$ and $1$ stay close to their coexisting equilibrium 
$(\lfloor \bar{n}_{01}^{(0)}K\rfloor ,\lfloor \bar{n}_{01}^{(1)}K\rfloor)$ 
during a time of order $e^{VK}$ for a positive $V$ and if $S_{01}<0$ the $0$-population size is 
comparable to a subcritical birth and death process, with individual birth  and death rates 
$$\beta_0 \quad \text{ and }\quad\delta_0+\frac{C_{0,1}}{K} \bar{n}_1 K= \beta_0+|S_{01}|>\beta_0.$$
Hence it gets extinct almost surely, in a time close to $\log K/|S_{01}|$.\\

To summarize, for large $K$, the first and third phases have a duration of order at least $\log K$ and the second phase has a duration of order $1$. 
Hence if a second mutation appears during the sweep, this occurs with high probability  
during the first or the third phase. We denote by $\alpha \log K$ the time of occurrence of the second mutation and we distinguish three cases:
\begin{enumerate}
 \item[$\bullet$] Either the mutation $2$ occurs during the first phase of the mutant $1$ invasion and the $1$-population size hits $\lfloor \eps K\rfloor $ before the $2$-population,
which corresponds to the Assumptions \ref{asscondinitiale} and:
\begin{ass}\label{ass1}
 $$S_{20}>0\quad \text{and} \quad 0 \vee \Big(\frac{1}{S_{10}}-\frac{1}{S_{20}}\Big)<\alpha < \frac{1}{S_{10}} . $$
\end{ass}
 \item[$\bullet$] Or it occurs during the first phase but the invasion fitness $S_{20}>0$ is large enough for the $2$-population to hit size $\lfloor \eps K \rfloor$ before the $1$-population, 
which corresponds to the Assumptions \ref{asscondinitiale} and:
\begin{ass}\label{ass2}
 $$S_{20}>0\quad \text{and} \quad 0<\alpha<\mathbf{1}_{S_{20}>S_{10}}\Big(\frac{1}{S_{10}}-\frac{1}{S_{20}}\Big)  . $$
\end{ass}
 \item[$\bullet$] Or it occurs during the third phase, which corresponds to the Assumptions \ref{asscondinitiale} and:
\begin{ass}\label{ass3}
 $$\alpha > \frac{1}{S_{10}} \quad \text{and} \quad \left\{\begin{array}{l}
 S_{01} > 0 \\
\text{or}\\
 S_{01} < 0  \text{ and }  \alpha < 1/S_{10}+1/|S_{01}|.
\end{array}\right.  $$
\end{ass}
\end{enumerate}

In Appendix \ref{completedescr} we give a complete description of the possible population dynamics.
We now highlight the biologically relevant outcomes.

\section{Results}\label{results}

\subsection{Transitive versus non-transitive fitnesses} \label{discutransi}

Let us say a word about classical models of clonal interference 
(see \cite{barton1995linkage,gerrish1998fate,de2006clonal,campos2004modelling,desai2007beneficial} for instance).
The population size is constant (or infinite). The fitness $s_i$ of an $i$-individual corresponds to its exponential growth rate (Malthusian parameter) 
and only depends on its type $i$. 
Suppose that in a population of type $0$ a beneficial mutation $1$ is followed by a second beneficial mutation $2$
 before the fixation of the type $1$ individuals. In population genetics, "beneficial" means that $s_1>0$ and $s_2>0$, which implies that 
the mutant populations have a positive probability to escape genetic drift and constitute a positive fraction of the population.
Then there are two possibilities: 
\begin{enumerate}
 \item Either $s_1>s_2$: then the $1$-population outcompetes the $0$- and $2$-populations and the mutation $1$ becomes fixed.
 \item Or $s_2>s_1$: then the $2$-population outcompetes the $0$- and $1$-populations and the mutation $1$ becomes fixed.
\end{enumerate}
If a third mutation (individuals of type $3$) occurs with fitness satisfying $s_3>s_2$, then not only the type $3$ individuals outcompete the type $2$
 individuals, but by transitivity of the total order $>$ on $\R$, they also outcompete the type $1$ and type $0$ individuals, and so on.
In other words, in population genetics of haploid asexuals, the fitnesses are transitive in the sense that if $1$ outcompetes $0$ and $2$ outcompetes $1$, 
then necessarily $2$ outcompetes $0$.

Such a model is natural when competitive interactions between individuals are simple: in an experiment with only 
one limiting resource, beneficial mutations often correspond to an increase of resource consumption efficiency.
But let us imagine an environment with two resources, $A$ and $B$. A mutant which prefers resource $A$ (resp. $B$)
will be favoured in 
a population of individuals which consume preferentially resource $B$ (resp. $A$). 
In this example there is no transitivity.
Moreover, experiments on cancer and viral cells 
have shown the long term coexistence of several mutant strains \cite{holmes1992convergent},
which cannot be explained by a model with transitive fitnesses.

More generally, it is known \cite{nowak2004evolutionary} that ecological interactions cause non-transitive phenotypic interactions.
Hence to model complex interactions, we need an other definition of "fitnesses", to make appear the dependency on the population state. 
This is achieved by the notion of invasion fitnesses (defined in \eqref{deffitinv} and \eqref{deffitinv2}), which naturally 
follows from the individual based model that we have presented.

The main novelty of our approach is to consider type dependent competitive interactions. Indeed, if 
all the competitive interactions $(C_{i,j}, (i,j) \in \mathcal{E}^2)$ have a same value $C$, then 
for three individual's types $i$, $j$ and $k$, the invasion fitnesses satisfy:
 $$ S_{ij}>0 \Longleftrightarrow   S_{ji} = \beta_j-\delta_j- C \bar{n}_i=\beta_j-\delta_j- C (\beta_i-\delta_i)/C=-S_{ij}<0, $$
and 
$$ S_{ij}>0 \text{ and } S_{jk}>0 \Longrightarrow  S_{ik}=S_{ij}+S_{jk}>0 . $$
In other words, it boils down to the case of transitive fitnesses.
We can have a more precise result on the form of competitions allowing non transitive relations between several mutants.
To state this latter we introduce the following order for $ (i,j) \in \mathcal{E}^2$:
$$ i \prec j \Longleftrightarrow S_{ij}<0<S_{ji}, $$
$$ i = j \Longleftrightarrow  S_{ij}.S_{ji}\geq 0  ,$$
and the notation 
\begin{equation} \label{tildeC} \tilde{C}_{i,j}= \frac{C_{i,j}}{C_{j,j}}. \end{equation}
Then we can state the following Lemma, which will be proven in Appendix \ref{knownresults}.

\begin{lem}\label{transivsnontransi}
 Let $C_1\leq C_2$ be two positive real numbers and $i,j,k \in \mathcal{E}$.
 \begin{enumerate}
  \item If
 \begin{equation}\label{transicas1} \Big(C_1 \vee \frac{1}{C_2} \Big)^2>C_2 \quad \text{and} \quad \Big( \frac{1}{C_1} \wedge C_2 \Big)^2<C_1 \end{equation}
then if for every $m\neq n \in \{ i,j,k \}$, $C_1 \leq \tilde{C}_{m,n} \leq C_2$, 
$$ i \prec j \quad \text{and} \quad j \prec k \Longrightarrow i \prec k . $$
\item If
 \begin{equation}\label{transicas2} \Big(C_1 \vee \frac{1}{C_2} \Big)^2>C_2 \quad \text{or} \quad \Big( \frac{1}{C_1} \wedge C_2 \Big)^2<C_1 \end{equation}
then if for every $m\neq n \in \{i,j,k\}$, $C_1 \leq \tilde{C}_{m,n} \leq C_2$, 
$$ i \prec j \quad \text{and} \quad j \prec k \Longrightarrow i \preceq k . $$
\item If
 \begin{equation}\label{transicas3}  \Big(C_1 \vee \frac{1}{C_2} \Big)^2<C_2 \quad \text{and} \quad \Big( \frac{1}{C_1} \wedge C_2 \Big)^2>C_1 \end{equation}
then there exist some ecological parameters $(\rho_m, \tilde{C}_{m,n}, m\neq n \in \{i,j,k\})$ 
such that for every $m\neq n \in \{i,j,k\}$, $C_1 \leq \tilde{C}_{m,n} \leq C_2$ and
$$ i \prec j, \quad j \prec k \quad \text{and} \quad k \prec i  .$$
  \end{enumerate}
\end{lem}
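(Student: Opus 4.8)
The plan is to translate the order $\prec$ into inequalities between the scalar ratios $\rho_m/\rho_n$, where I write $\rho_m:=\beta_m-\delta_m=C_{m,m}\bar n_m>0$. Combining \eqref{deffitinv} with $\bar n_j=\rho_j/C_{j,j}$ and the definition \eqref{tildeC} gives $S_{ij}=\rho_i-\tilde C_{i,j}\rho_j$, so that $S_{ij}<0\iff \rho_i/\rho_j<\tilde C_{i,j}$ and $S_{ji}>0\iff \rho_i/\rho_j<1/\tilde C_{j,i}$. Hence for distinct $m,n$,
\[
 m\prec n \iff \frac{\rho_m}{\rho_n}<\tilde C_{m,n}\wedge\frac{1}{\tilde C_{n,m}},
 \qquad\text{and in particular}\qquad
 k\prec i \iff \frac{\rho_i}{\rho_k}>\tilde C_{i,k}\vee\frac{1}{\tilde C_{k,i}} .
\]
I then introduce the shorthands $a:=C_2\wedge(1/C_1)$ and $b:=C_1\wedge(1/C_2)$, and record the reciprocal identity $1/a=C_1\vee(1/C_2)$ (which links the two halves of each hypothesis) together with $b\le a$, the latter following from $C_1\le C_2$.

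Second, I would show that the three hypotheses collapse to one-line conditions on $a$. Since $(1/a)^2>C_2\iff a^2<1/C_2$ and $(C_2\wedge 1/C_1)^2=a^2$, hypothesis \eqref{transicas1} reads ``$a^2<1/C_2$ and $a^2<C_1$'', i.e.\ $a^2<b$; hypothesis \eqref{transicas2} is the same two inequalities joined by ``or'', i.e.\ $a^2<C_1\vee(1/C_2)=1/a$, equivalently $a<1$; and hypothesis \eqref{transicas3} is ``$a^2>1/C_2$ and $a^2>C_1$'', i.e.\ $a^2>1/a$, equivalently $a>1$. Note that $a^2<b\le a$ forces $a<1$, so the case (1) region sits inside the case (2) region, consistent with the stronger conclusion drawn there.

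The multiplicative core is then a single chain of inequalities. Under $C_1\le\tilde C_{m,n}\le C_2$ one has $\tilde C_{m,n}\wedge(1/\tilde C_{n,m})\le C_2\wedge(1/C_1)=a$, so the hypotheses $i\prec j$ and $j\prec k$ give $\rho_i/\rho_j<a$ and $\rho_j/\rho_k<a$, whence $\rho_i/\rho_k<a^2$. For part (1), the same bound yields $\tilde C_{i,k}\wedge(1/\tilde C_{k,i})\ge C_1\wedge(1/C_2)=b$, so $\rho_i/\rho_k<a^2<b\le\tilde C_{i,k}\wedge(1/\tilde C_{k,i})$, which is exactly $i\prec k$. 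For part (2), I instead bound $\tilde C_{i,k}\vee(1/\tilde C_{k,i})\ge C_1\vee(1/C_2)=1/a$; since $a<1$ gives $a^2<1/a$, we get $\rho_i/\rho_k<a^2<1/a\le\tilde C_{i,k}\vee(1/\tilde C_{k,i})$, which rules out $k\prec i$ and therefore gives $i\preceq k$ (meaning $i\prec k$ or $i=k$, equivalently not $k\prec i$).

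For part (3), with $a>1$ I exhibit a Rock--Paper--Scissors configuration directly in the parameters $(\rho_m,\tilde C_{m,n})$: take $\rho_i=\rho_j=\rho_k$ and assign the cyclic coefficients $\tilde C_{i,j}=\tilde C_{j,k}=\tilde C_{k,i}=C_2$ and the reverse ones $\tilde C_{j,i}=\tilde C_{k,j}=\tilde C_{i,k}=C_1$, all of which lie in $[C_1,C_2]$. Each of the three relevant minima then equals $C_2\wedge(1/C_1)=a>1$ while each ratio of $\rho$'s equals $1$, so $1<a$ simultaneously yields $i\prec j$, $j\prec k$ and $k\prec i$. The only genuinely nontrivial step in the whole argument is the reformulation of the first paragraph -- identifying the scalar ratios and the extremal quantities $a,b$, and especially the reciprocal identity $1/a=C_1\vee(1/C_2)$ that collapses each two-sided hypothesis into a single comparison. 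I expect the $\min/\max$ and reciprocal bookkeeping that verifies \eqref{transicas1}--\eqref{transicas3} $\Leftrightarrow a^2<b,\,a<1,\,a>1$ to be the most error-prone part; once past it, each conclusion is immediate.
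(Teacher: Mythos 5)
Your proof is correct and follows essentially the same route as the paper's: both reduce $m\prec n$ to the ratio inequality $\rho_m/\rho_n<\tilde C_{m,n}\wedge \tilde C_{n,m}^{-1}$ and multiply the two hypotheses to bound $\rho_i/\rho_k$ by the square of the extremal quantity $C_2\wedge C_1^{-1}$. Your repackaging of the three hypotheses as $a^2<b$, $a<1$, $a>1$, and your equal-$\rho$ cyclic construction in part (3) (the paper instead takes $\rho_i/\rho_j=\rho_j/\rho_k=C_2\wedge C_1^{-1}-\eta$ with the same cyclic assignment of $C_1$ and $C_2$), are only minor streamlinings of the paper's argument.
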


\begin{rem}
 By looking at all the possible subcases we can show the following equivalencies:
 $$ \eqref{transicas1} \Longleftrightarrow C_2^2<C_1 <C_2<1 \quad \text{or} \quad 1<C_1<C_2<C_1^2,  $$
 $$ \eqref{transicas2} \Longleftrightarrow C_1<C_2^2<C_2<1 \quad \text{or} \quad 1<C_1<C_1^2 <C_2,  $$
 $$ \eqref{transicas3} \Longleftrightarrow C_1<1<C_2 . $$
 This shows that transitive relations are more likely when the rescaled competitions $(\tilde{C}_{i,j}, (i,j) \in \mathcal{E}^2)$ are 
 close to each other and both smaller of greater than $1$.
\end{rem}

By allowing dependency of competitive interactions on the individual's types, we are able to model new ecological patterns found in nature 
and describe them mathematically. 
In particular, we will show that it allows us to model coexistence of several 
mutant populations, and complex dynamics as Rock-Paper-Scissors cycles.\\

To present our main results in a simple way, we introduce the following notation,
\begin{equation*}
 \P^{(k)}(.)=\P(.|\text{the mutation(s) $k$ is(are) present}), \quad k \in \{\{1\},\{2\},\{1,2\}\},
\end{equation*}
where we recall that the mutation $1$ occurs at time $0$ and the mutation $2$ at time $\alpha \log K$.

\subsection{Does clonal interference speed up or slow down invasion?} \label{restime}

In population genetic models of clonal interference, the authors concluded that the presence of other mutants slowed down the invasion of the fittest mutant 
as it had to outcompete some individuals fitter than the wild type individuals \cite{muller1932some,muller1964relation,gerrish1998fate,de2006clonal}.
In our individual based model, the result of the competition between two 
mutant subpopulations depends on the state of the total population, and the interplay between different mutations can take various forms.
For some values of the parameters,
clonal interference does indeed slow down the invasion of a beneficial mutant. 
Recall Equations \eqref{invichezj} to \eqref{invtime} about the 
invasion of 
one mutant. Then

\begin{pro} \label{propslowdown}
Let Assumptions \ref{asscondinitiale} and \ref{ass1} be satisfied and suppose:
\begin{equation}\label{cas1slow}
  S_{01},S_{02}<0,\quad S_{12}<0<S_{21}, 
 \quad \text{and} \quad S_{21}<S_{20},
\end{equation}
or 
\begin{equation}\label{cas2slow} S_{01}>0,S_{02}<0,\quad S_{12}<0<S_{201}\quad \text{and} \quad S_{201}<S_{20}. \end{equation}
Then the presence of the mutation $1$does not modify the invasion probability of the mutation $2$ and the final state set of the population: 
for $\eps>0$, there exist two finite constants $\beta(\eps)$ and $V(\eps)$ such that
$$\P^{(k)}(\bar{n}_{2}e_2 \in FS(\eps,K,\beta(\eps),V(\eps)))=\frac{S_{20}}{\beta_2}+O_\eps(K),
\quad \text{for }k = \{1,2\} \text{ or } \{2\}.$$
However the presence of the first mutant slows down the invasion of the second one: there exists 
a positive constant $c$ such that 
for every $\eps>0$:
$$\P^{(1,2)}\Big((1-c\eps)\frac{\log K}{\tilde{S}_{20}}<T^{(\eps,K)}_{FS(V(\eps))}(\bar{n}_{2}e_2)-\alpha \log K<(1+c\eps)\frac{\log K}{\tilde{S}_{20}}\Big)=
\frac{S_{20}}{\beta_2}\frac{S_{10}}{\beta_1}+O_K(\eps),$$
where $\tilde{S}_{20}>{S}_{20}$ is defined by
$$ \frac{1}{\tilde{S}_{20}}=\frac{1}{{S}}+\Big(\frac{1}{S_{10}}-\alpha \Big)\Big(1-\frac{S_{20}}{{S}}\Big) > \frac{1}{S_{20}}, $$
where ${S}= S_{21} $ in case \eqref{cas1slow} and $S_{201}$
in case \eqref{cas2slow}, and
$$\P^{(1,2)}\Big((1-c\eps)\frac{\log K}{{S}_{20}}<T^{(\eps,K)}_{FS(V(\eps))}(\bar{n}_{2}e_2)<
(1+c\eps)\frac{\log K}{{S}_{20}}\Big)=\frac{S_{20}}{\beta_2}\Big(1-\frac{S_{10}}{\beta_1}\Big)+O_K(\eps).$$
\end{pro}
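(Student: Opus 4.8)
The plan is to follow the second mutant's subpopulation $N_2^K$ through the successive phases of the first mutant's sweep, comparing it on each phase with a well-chosen birth and death process and invoking the deterministic Lotka--Volterra approximation \eqref{S} for the macroscopic types. I would structure the argument around the stopping times at which $N_1^K$ and $N_2^K$ cross the threshold $\lfloor \eps K\rfloor$, and around the order-$1$ deterministic phase of the $0/1$ system; the two announced timing statements will correspond respectively to the event that type $1$ survives genetic drift and to the event that it is lost.

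First I would treat the initial phase $t\in[0,\log K/S_{10}]$. As long as $N_1^K$ and $N_2^K$ are both $o(K)$, the type-$0$ population stays within $\eps K$ of $\bar n_0 K$ and the cross terms $C_{1,2}N_2^K/K$, $C_{2,1}N_1^K/K$ are negligible, so $N_1^K$ and $N_2^K$ can each be framed between supercritical birth and death processes of individual rates $(\beta_1,\beta_1-S_{10}\pm O(\eps))$ and $(\beta_2,\beta_2-S_{20}\pm O(\eps))$, their two founding lineages being asymptotically independent. The branching estimates recalled around \eqref{invichezj}--\eqref{invtime} (and \cite{MR2047480}) then give that type $1$ escapes drift with probability $S_{10}/\beta_1+O(\eps)$ and type $2$ with probability $S_{20}/\beta_2+O(\eps)$, independently. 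Since the fate of the type-$2$ lineage is decided while it only sees the type-$0$ background, this already yields the invasion probability $S_{20}/\beta_2+O_K(\eps)$, unchanged by the presence of type $1$. Conditionally on both surviving, $N_1^K$ hits $\lfloor\eps K\rfloor$ at time $(1+O(\eps))\log K/S_{10}$, and because Assumption \ref{ass1} gives $\alpha>1/S_{10}-1/S_{20}$, at that instant $N_2^K$ has only reached $K^{\theta+O(\eps)}$ with $\theta:=S_{20}(1/S_{10}-\alpha)<1$; hence type $2$ is still microscopic and has not perturbed the $0/1$ dynamics.

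Next comes the order-$1$ deterministic phase: once $N_1^K=\lfloor\eps K\rfloor$, Theorem 2.1 p.~456 of \cite{ethiermarkov} shows that $(N_0^K/K,N_1^K/K)$ tracks \eqref{S} with $(i,j)=(0,1)$, reaching either $(\eps^2,\bar n_1)$ (case \eqref{cas1slow}, $S_{01}<0$) or the coexistence point $(\bar n_{01}^{(0)},\bar n_{01}^{(1)})$ (case \eqref{cas2slow}, $S_{01}>0$), while $N_2^K=o(K)$ leaves this system unchanged at leading order and stays of order $K^{\theta+O(\eps)}$. On the final growth phase the environment seen by type $2$ has switched, so I would again sandwich $N_2^K$ between supercritical birth and death processes, now of per-capita growth rate $S+O(\eps)$ with $S=S_{21}$ in case \eqref{cas1slow} and $S=S_{201}$ in case \eqref{cas2slow}; type $2$ then climbs from $K^{\theta}$ to $\lfloor\eps K\rfloor$ in time $(1+O(\eps))(1-\theta)\log K/S$. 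Adding the two growth phases gives, measured from $\alpha\log K$, a total of $\big[(1/S_{10}-\alpha)+(1-\theta)/S\big]\log K=\log K/\tilde S_{20}$ with $1/\tilde S_{20}=1/S+(1/S_{10}-\alpha)(1-S_{20}/S)$ exactly as claimed, and the identity $1/\tilde S_{20}-1/S_{20}=(1-\theta)(1/S-1/S_{20})$ shows $1/\tilde S_{20}>1/S_{20}$ precisely because $S<S_{20}$ and $\theta<1$. A last order-$1$ deterministic phase (using $S_{02},S_{12}<0$, so type $2$ outcompetes both others) followed by the subcritical decay of types $0,1$ below $\eps K$ brings the process into the $\eps$-ball around $\bar n_2e_2$ without changing the leading $\log K$ term, giving the first timing statement with probability $(S_{20}/\beta_2)(S_{10}/\beta_1)+O_K(\eps)$. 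On the complementary event, where type $2$ survives but type $1$ is lost to drift in $O(1)$ time (probability $(S_{20}/\beta_2)(1-S_{10}/\beta_1)+O_K(\eps)$), type $2$ invades a pure type-$0$ population and \eqref{invtime} applies verbatim with rate $S_{20}$, yielding the second statement.

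The main obstacle is the middle, environment-switching step: rigorously controlling $N_2^K$ while its per-capita growth rate drifts from $S_{20}$ to $S$ during the order-$1$ Lotka--Volterra transient, and showing that this transition contributes only $O(1)$ to the exponent of $N_2^K$, hence is negligible against $\log K$. I would handle it by freezing the macroscopic coordinates inside thin $\eps$-tubes, coupling $N_2^K$ with time-homogeneous birth and death processes whose rates bracket the true rate on each tube, and propagating the resulting $O(\eps)$ errors through the composition of phases. The delicate bookkeeping is to collect all these $\eps$-errors into the final $O_K(\eps)$ terms while verifying that $N_2^K$ never reaches $\eps K$ before the switch, which is exactly where the hypotheses $\alpha>1/S_{10}-1/S_{20}$ (equivalently $\theta<1$) and $S<S_{20}$ enter decisively.
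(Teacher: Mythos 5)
Your proposal follows essentially the same route as the paper: the paper proves the companion Proposition \ref{propspeedup} in detail (and declares this one analogous) by exactly this phase decomposition --- birth-and-death couplings for the initial stochastic phase (Lemmas \ref{lemphase11} and \ref{lemdureephase1ass1}), the order-$1$ Lotka--Volterra transient during which the microscopic type-$2$ population keeps its order of magnitude $K^{\theta}$ (Lemma \ref{lemmephase2}), a second stochastic growth phase at rate $S_{21}$ or $S_{201}$ from $K^{\theta}$ to $\lfloor\eps K\rfloor$ (Lemmas \ref{lemmephase2n+1} and \ref{lemdureephase2n+1ass}), and the final deterministic and extinction phases --- and your bookkeeping of $\theta=S_{20}(1/S_{10}-\alpha)$ and of $1/\tilde{S}_{20}$ reproduces the paper's computation, including the sign check via $1/\tilde{S}_{20}-1/S_{20}=(1-\theta)(1/S-1/S_{20})$. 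The only difference is presentational: the paper packages the ``freeze the macroscopic coordinates in $\eps$-tubes and couple with time-homogeneous birth-and-death processes'' argument you sketch at the end into the pre-established lemmas of Sections \ref{firstphase}--\ref{thirdphase} rather than carrying it out inline.
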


But due to the non-transitive phenotypic interactions that our model is able to take into account, 
the presence of the first mutant can also speed up the invasion of the second one. More precisely we have the following result, 
which is illustrated in Figure \ref{graphprop22}:

\begin{pro}\label{propspeedup}
Let Assumptions \ref{asscondinitiale} and \ref{ass1} be satisfied and suppose:
\begin{equation}\label{cas1speed}
  S_{01},S_{02}<0,\quad S_{12}<0<S_{21}, 
 \quad \text{and} \quad
S_{21}>S_{20},
\end{equation}
or 
\begin{equation}\label{cas2speed} S_{01}>0,S_{02}<0,\quad S_{12}<0<S_{201}\quad \text{and} \quad S_{201}>S_{20}. \end{equation}
Then the presence of the mutation $1$does not modify the invasion probability of the mutation $2$ and the final state of the population: 
{for $\eps>0$, there exist two finite constants $\beta(\eps)$ and $V(\eps)$ such that
$$\P^{(k)}(\bar{n}_{2}e_2 \in FS(\eps,K,\beta(\eps),V(\eps)))=\frac{S_{20}}{\beta_2}+O_\eps(K), \quad \text{for }k = \{1,2\} \text{ or } \{2\}.$$}
However the presence of the first mutant speeds up the invasion of the second one: there exists a positive constant $c$ such that 
for every $\eps>0$:
$$\P^{(1,2)}\Big( (1-c\eps)\frac{\log K}{\tilde{S}_{20}}<T^{(\eps,K)}_{FS(V(\eps))}(\bar{n}_{2}e_2)<(1+c\eps)\frac{\log K}{\tilde{S}_{20}}\Big)=
\frac{S_{20}}{\beta_2}\frac{S_{10}}{\beta_1}+O_K(\eps),$$
where $\tilde{S}_{20}>{S}_{20}$ is defined by
$$ \frac{1}{\tilde{S}_{20}}=\frac{1}{S}+\Big(\frac{1}{S_{10}}-\alpha \Big)
\Big(1-\frac{S_{20}}{S}\Big) < \frac{1}{S_{20}}, $$
where $S= S_{21}$ in case \eqref{cas1speed} and $S= S_{201}$ in case \eqref{cas2speed}, and
$$\P^{(1,2)}\Big( (1-c\eps)\frac{\log K}{{S}_{20}}<T^{(\eps,K)}_{FS(V(\eps))}(\bar{n}_{2}e_2)<(1+c\eps)\frac{\log K}{{S}_{20}}\Big)=
\frac{S_{20}}{\beta_2}\Big(1-\frac{S_{10}}{\beta_1}\Big)+O_K(\eps).$$
\end{pro}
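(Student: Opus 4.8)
The plan is to reproduce, for the three-type process, the alternation of stochastic (duration $\sim\log K$) and deterministic (duration $\sim 1$) phases underlying the single-mutant estimates \eqref{invichezj}--\eqref{invtime}, while following the type-$2$ subpopulation across a change of genetic background; the argument is structurally identical to that of Proposition \ref{propslowdown} and differs only in the sign of $S-S_{20}$. First I would control the initial phase by coupling: as long as $N_1^K,N_2^K<\lfloor\eps K\rfloor$ and $N_0^K$ stays within $O(\eps K)$ of $\bar n_0 K$, the cross-competition terms are $O(\eps)$, so $N_1^K$ is sandwiched between supercritical birth-and-death processes of individual birth rate $\beta_1$ and death rate $\beta_1-S_{10}$ up to $O(\eps)$, and $N_2^K$ (born at time $\alpha\log K$) between processes of rates $\beta_2$ and $\beta_2-S_{20}$ up to $O(\eps)$, the two families being asymptotically independent. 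By standard supercritical birth-and-death theory \cite{MR2047480} these survive with probabilities $S_{10}/\beta_1$ and $S_{20}/\beta_2$; Assumption \ref{ass1} guarantees that, at leading order, $N_1^K$ reaches $\lfloor\eps K\rfloor$ strictly before $N_2^K$ would. This already yields the two probabilities $\tfrac{S_{10}}{\beta_1}\tfrac{S_{20}}{\beta_2}$ and $(1-\tfrac{S_{10}}{\beta_1})\tfrac{S_{20}}{\beta_2}$, and, since type $2$ reaches $\bar n_2 e_2$ exactly when it survives this first phase (independently of type $1$), the invasion-probability claim $S_{20}/\beta_2+O_K(\eps)$ for $k=\{1,2\}$; the case $k=\{2\}$ is just \eqref{invichezj} applied to the pair $(2,0)$.

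On the event that type $1$ dies out (probability $\to 1-S_{10}/\beta_1$), the type-$1$ excursion is negligible and type $2$ invades a pure type-$0$ population, so the whole statement reduces to \eqref{invichezj}--\eqref{invtime} for $(i,j)=(2,0)$: on survival, invasion to $\bar n_2 e_2$ at rate $S_{20}$, giving the second time estimate.

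The core is the event that both subpopulations survive, where I track $N_2^K$ through three regimes. (i) On $[\alpha\log K,\ \log K/S_{10}(1+O(\eps))]$ the background is type $0$ at equilibrium, so $N_2^K$ grows at rate $S_{20}$ and, when $N_1^K$ hits $\lfloor\eps K\rfloor$, has reached the intermediate size $K^{\gamma}$ with $\gamma=S_{20}(1/S_{10}-\alpha)\in(0,1)$ (the bounds on $\gamma$ being exactly Assumption \ref{ass1}). (ii) During the ensuing order-$1$ phase I apply the Kurtz-type approximation (Theorem 2.1 p.~456 of \cite{ethiermarkov}) to show that $(N_0^K/K,N_1^K/K)$ follows the two-dimensional system \eqref{S}; by $S_{01}<0$ in case \eqref{cas1speed} it is driven near $\bar n_1 e_1$, while by $S_{01}>0$ in case \eqref{cas2speed} (so \eqref{defnbara2} holds) it is driven near the coexistence point $(\bar n_{01}^{(0)},\bar n_{01}^{(1)})$ of \eqref{defnij}; type $2$ remains negligible, hence does not perturb this planar flow, and its own size changes only by a bounded factor. (iii) With the new background frozen, I recouple $N_2^K$ to birth-and-death processes of rate $S$, equal to $S_{21}>0$ (case \eqref{cas1speed}) or $S_{201}>0$ (case \eqref{cas2speed}, using \eqref{deffitinv2} and the coexistence-invasion result of \cite{champagnat2011polymorphic}); being already of size $K^{\gamma}$, this process does not die out and $N_2^K$ grows deterministically to $\lfloor\eps K\rfloor$. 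A last order-$1$ phase, governed by the three-dimensional system \eqref{S2}, the sign pattern $S_{02},S_{12}<0<S$, and the Lotka--Volterra classification recalled in Section \ref{section3dim}, carries the population to $\bar n_2 e_2$.

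Summing the durations on this event gives the invasion time: the growth of the second mutant from its appearance to fixation takes $d_A+d_B$, with $d_A=(1/S_{10}-\alpha)\log K$ at rate $S_{20}$ and $d_B=\tfrac{1}{S}\big(1-S_{20}(1/S_{10}-\alpha)\big)\log K$ at rate $S$, the order-$1$ deterministic phases contributing nothing to leading order; collecting terms reproduces exactly $\log K/\tilde S_{20}$ with $1/\tilde S_{20}=1/S+(1/S_{10}-\alpha)(1-S_{20}/S)$, to which the appearance time $\alpha\log K$ is added. Because \eqref{cas1speed}/\eqref{cas2speed} impose $S>S_{20}$, we have $1-S_{20}/S>0$, hence $\tilde S_{20}>S_{20}$: the second mutant fixes strictly faster than in a pure type-$0$ population, which is the announced speed-up (the slow-down of Proposition \ref{propslowdown} is the mirror case $S<S_{20}$). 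I expect the main obstacle to be regime (iii): I must prove that $N_2^K$, sitting at the $K$-dependent size $K^{\gamma}$ when the background switches, stays glued to the exponential $t\mapsto e^{S t}$ with only $O(\eps)$ relative error in the exponent and suffers no extinction, so that the constant $\tilde S_{20}$---and not merely the exponential order---is obtained. Controlling the birth-and-death fluctuations at these intermediate sizes and matching the two couplings cleanly across the order-$1$ deterministic phase is where the genuine work lies.
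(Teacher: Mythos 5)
Your proposal follows essentially the same route as the paper's proof: the same first-phase coupling with independent (nearly) supercritical birth-and-death processes, the same decomposition into the survival events of probabilities $(1-S_{10}/\beta_1)S_{20}/\beta_2$, $S_{10}S_{20}/(\beta_1\beta_2)$ and $1-S_{20}/\beta_2$, the same tracking of $N_2^K$ at size $K^{S_{20}(1/S_{10}-\alpha)}$ across the order-one deterministic phase, and the same time accounting producing $1/\tilde S_{20}=1/S+(1/S_{10}-\alpha)(1-S_{20}/S)$. The technical work you correctly identify in regime (iii) --- controlling the $2$-population at intermediate size $K^{\gamma}$ through the background switch and during the subsequent stochastic phase --- is exactly what the paper's Lemmas \ref{lemmephase2}, \ref{lemmephase2n2dim}, \ref{lemmephase2n+1} and \ref{lemdureephase2n+1ass} supply.
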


\begin{figure}[h]
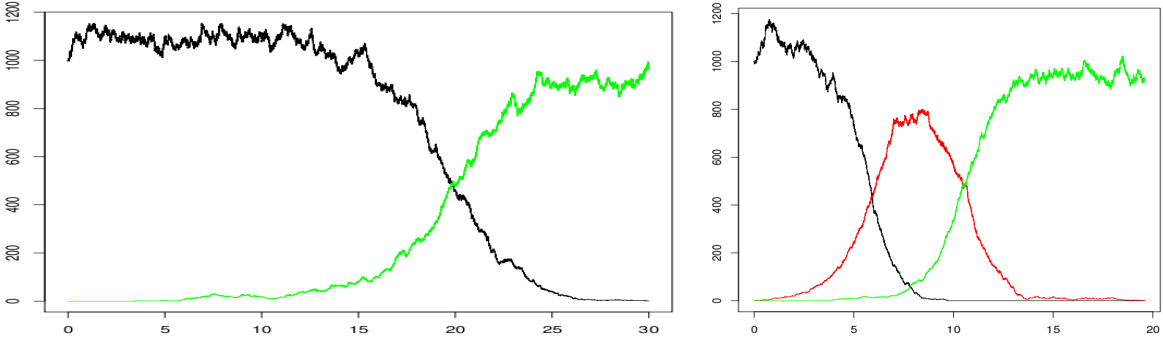

\centering
\includegraphics[width=9cm,height=4.5cm]{graphprop22null.png}\hspace{.2cm}
\includegraphics[width=6cm,height=4.5cm]{graphprop22.png}
\caption{Invasion times of the mutation $2$ without and with the first mutation. 
The y-axis corresponds to the three type population sizes ($0$ in black, $1$ in red and $2$ in green), and the x-axis to the time. 
In these simulations, 
$K=1000$, $(\beta_i, \delta_i) =(2,0), i \in \mathcal{E}$, $C_{0,0}=1.8, C_{0,1}=4, C_{0,2}=3, C_{1,0}=1, C_{1,1}=2.3, C_{1,2}= 3, 
C_{2,0}=1.5, C_{2,1}=1, C_{2,2}=2.1$, and $\alpha=0.5$.}
\label{graphprop22}
\end{figure}

\subsection{How does clonal interference modify the invasion probability?}\label{resfix}

Previous models predicted that clonal interference could only decrease the invasion probability of a mutant \cite{de2006clonal}.
It was a direct consequence of the fact that the fate of a competition between two mutants was only dependent on the relative 
values of their fitnesses.
Moreover, this implied that individuals with different phenotypes could not coexist for a long time.
In our model, both decrease and increase of the invasion probability due to clonal interference may occur, depending on the 
competitive interactions between individuals and the time of appearance of the second mutation. Moreover, long term coexistence of several beneficial mutations 
are allowed when we do not assume that the mutant fitnesses are totally ordered. Recall the definition of invasion in \eqref{definv}. Then we have the following result:

\begin{pro}\label{propdecreaseinvproba}
Let Assumption \ref{asscondinitiale} and one of the following conditions hold:
$$\left\{ \begin{array}{l}
    S_{01}>0, \quad S_{201}<0 \quad  \text{and Assumption \ref{ass1} holds}, \\
S_{01}<0, \quad S_{21}<0  \quad \text{and Assumption \ref{ass1} holds}\\
S_{01}<0, \quad S_{21}<0,  \quad S_{20}>0 \quad \text{and Assumption \ref{ass3} holds}
   \end{array}\right.
 $$ 
Then for every $n \in \R_+^3 \setminus \{0\} $ and $\beta,V>0$:
$$\lim_{K \to \infty}\P^{(1,2)}(I(2,K,n,\beta,V))=0,$$
whereas
there exist $n \in \R_+^3 \setminus \{0\}$ and $\beta,V>0$ such that:
$$
\lim_{K \to \infty}\P^{(2)}(I(2,K,n,\beta,V))=\frac{S_{20}}{\beta_2}.$$
\end{pro}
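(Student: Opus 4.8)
The plan is to contrast the fate of the $2$-population in the presence of the first mutation with its fate in a pure wild-type environment. The assertion under $\P^{(2)}$ is the short one: here the mutant $2$ is introduced at time $\alpha\log K$ in a monomorphic $0$-population at its equilibrium $\lfloor\bar n_0K\rfloor$, and since $S_{20}>0$ in each of the three regimes, the single-mutant result \eqref{invichezj}--\eqref{invtime} applied with $(i,j)=(2,0)$ shows that the $2$-population attains and sustains a density of order $K$ with probability $S_{20}/\beta_2+O_K(\eps)$. Choosing the threshold $n$ and the constants $\beta,V$ as in that result yields $\lim_{K\to\infty}\P^{(2)}(I(2,K,n,\beta,V))=S_{20}/\beta_2$.

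For the assertion under $\P^{(1,2)}$ I would use the three-phase decomposition of the $0/1$-dynamics recalled above together with the birth-and-death couplings of Sections \ref{firstphase}--\ref{thirdphase}, aiming to prove that in every regime the $2$-population remains $o(K)$ and dies out, so that the event $I(2,K,n,\beta,V)$ — which requires $N_2^K>\bar nK$ throughout $(\beta\log K,e^{VK})$ — fails with probability tending to $1$. The common mechanism is that, by the time the $2$-mutant could reach order $K$, the $0/1$-subpopulation already occupies an environment in which the \emph{per capita} growth rate of type $2$ is strictly negative.

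In the two regimes governed by Assumption \ref{ass1}, the $1$-population reaches $\lfloor\eps K\rfloor$ first; because $\alpha>1/S_{10}-1/S_{20}$, at that instant the $2$-population has size at most of order $K^{\theta}$ with $\theta=S_{20}(1/S_{10}-\alpha)\in(0,1)$, hence is negligible, and during the ensuing $O(1)$ deterministic phase it can grow only by a bounded factor. Afterwards the $0/1$-system sits either near its coexistence equilibrium $(\bar n_{01}^{(0)},\bar n_{01}^{(1)})$ (when $S_{01}>0$), where the \emph{per capita} growth rate of type $2$ is $S_{201}<0$, or near the exclusion state in which type $0$ is already negligible and type $1$ is close to $\bar n_1$ (when $S_{01}<0$), where this rate is at most $S_{21}<0$. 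In either case, as long as $N_2^K=o(K)$ the $2$-population is dominated by a subcritical birth-and-death process and becomes extinct in time $O(\log K)$. In the regime governed by Assumption \ref{ass3} with $S_{01}<0$, the mutant $2$ instead appears during the third phase: $\alpha>1/S_{10}$ forces the $1$-population to be already near $\bar n_1K$, while $\alpha<1/S_{10}+1/|S_{01}|$ keeps the $0$-population positive but of size of order $K^{\phi}$ with $\phi=1-|S_{01}|(\alpha-1/S_{10})\in(0,1)$; the growth rate of type $2$ is then $\beta_2-\delta_2-C_{2,1}\bar n_1+O(K^{\phi-1})$, which converges to $S_{21}<0$, so again $N_2^K$ is dominated by a subcritical birth-and-death process and dies out.

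The hard part will be the uniform control of $N_2^K$ through the transitional windows — the short deterministic second phase and the early part of the third phase — where the instantaneous growth rate of type $2$ may be positive (it is close to $S_{20}>0$ as long as type $0$ dominates). One must verify that these favourable windows are too short for $N_2^K$ to climb to order $K$, which requires feeding the precise trajectory of $(N_0^K,N_1^K)$ over the whole interval $[\alpha\log K,e^{VK}]$ into both the upper and lower birth-and-death couplings for $N_2^K$, and quantifying how quickly the $0/1$-environment forces type $2$ into a net-subcritical regime. Controlling the exponentially long final window also calls for the standard a priori estimates keeping $(N_0^K,N_1^K)$ in a neighbourhood of their limiting equilibrium for a time of order $e^{VK}$.
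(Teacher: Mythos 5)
Your overall strategy is the one the paper intends: it does not write out a proof of Proposition \ref{propdecreaseinvproba} at all, but declares it ``very similar'' to the proof of Proposition \ref{propincreaseinvproba}, which proceeds exactly as you do --- decompose according to the outcome of the first stochastic phase (Lemmas \ref{lemphase11} and \ref{lemphase13}), track the $0/1$-environment seen by the type-$2$ lineage through the short deterministic phase, and conclude by coupling $N_2^K$ with a subcritical birth-and-death process once the resident configuration makes its \emph{per capita} growth rate $S_{201}$ or $S_{21}$ strictly negative. Your treatment of the branch on which the type-$1$ lineage survives is correct in all three regimes, including the exponents $K^{S_{20}(1/S_{10}-\alpha)}$ and $K^{1-|S_{01}|(\alpha-1/S_{10})}$ for the dormant population, and your identification of the transitional windows as the place where uniform control is needed is exactly where the technical work of Lemmas \ref{lemmephase2} and \ref{lemmephase2n+1} lives.

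The genuine gap is the complementary branch, on which the type-$1$ lineage dies out during its initial stochastic phase; this occurs under $\P^{(1,2)}$ with probability close to $1-S_{10}/\beta_1$ and you never discuss it. In the paper's model proof (of Proposition \ref{propincreaseinvproba}) this is the \emph{first} case treated, and it is harmless there because $S_{20}<0$: the type-$2$ mutant then faces a monomorphic $0$-resident and dies. Here the situation is reversed: in all three regimes of Proposition \ref{propdecreaseinvproba} one has $S_{20}>0$ (explicitly in the third, via Assumption \ref{ass1} in the first two), so on the event that the type-$1$ lineage is already extinct when the type-$2$ mutant appears (or reaches a macroscopic size), the $0$-population is still near $\lfloor\bar n_0K\rfloor$ by \eqref{eq2lemmeA} and \eqref{invichezj} gives type $2$ an invasion probability close to $S_{20}/\beta_2$, apparently contributing $(1-S_{10}/\beta_1)S_{20}/\beta_2>0$ to $\P^{(1,2)}(I(2,K,n,\beta,V))$. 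To make the stated limit $0$ come out you must therefore either handle this branch explicitly or make precise that $\P^{(1,2)}$ is to be read as conditioning on the type-$1$ mutant \emph{population} being present (i.e.\ on its lineage surviving the first phase), which is how the paper's accompanying remark (``a mutant can get extinct because of the competition with an other mutant'') and the restriction of Table \ref{tableass1} to the survival event suggest the statement should be understood. As written, your argument silently assumes the type-$1$ population reaches $\lfloor\eps K\rfloor$, so it proves the conditional statement but not the unconditional one.
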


Notice that even if we recover here a classical result, saying that a mutant can get extinct because of the competition with an other mutant, 
we do not require $S_{10}>S_{20}>0$, which would be the equivalent of assumptions done in population genetic models. 

\begin{pro}\label{propincreaseinvproba}
Let Assumptions \ref{asscondinitiale} and \ref{ass3} hold, and suppose that $S_{20}<0$. Then under one of the following additional conditions
\begin{equation}\label{cond1proincrease2} S_{01}<0, \ S_{21}>0 \ \text{and} \ \left\{ \begin{array}{l}
    0<\alpha-\frac{1}{S_{10}} < \frac{1}{|S_{01}|}-\frac{1}{S_{21}} \text{ and }
\left\{\begin{array}{l}
 S_{12}>0 \\
\text{or}\\
S_{12}<0 \ \text{ and } \ S_{02}<0
\end{array} \right.\\
\text{or}\\
  \alpha-\frac{1}{S_{10}} > \frac{1}{|S_{01}|}-\frac{1}{S_{21}},
    \end{array}\right.,
 \end{equation}
 or
\begin{equation}\label{cond1proincrease} S_{01}>0 \quad \text{and}\quad S_{201}>0, \end{equation}
there exist $n \in \R_+^3 \setminus \{0\}$ and $\beta,V>0$ such that:
$$\lim_{K \to \infty}\P^{(1,2)}(I(2,K,n,\beta,V))=\frac{S}{\beta_2}\frac{S_{10}}{\beta_1},$$
where $S=S_{21}$ in case \eqref{cond1proincrease2} and $S= S_{201}$ in case \eqref{cond1proincrease},
whereas 
for every $n \in \R_+^3 \setminus \{0\}$ and $\beta,V>0$:
$$\lim_{K \to \infty}\P^{(2)}(I(2,K,n,\beta,V))=0.$$
\end{pro}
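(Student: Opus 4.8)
The plan is to reduce each invasion event to the survival of a one-dimensional birth and death process and to exploit the fact that, since $\alpha>1/S_{10}$ under Assumption \ref{ass3}, the fate of the first mutant is already settled when the second one appears. Start with the easy half, $\lim_K\P^{(2)}(I(2,K,n,\beta,V))=0$. When only mutation $2$ is present it arises in a monomorphic $0$-population near $\lfloor\bar n_0K\rfloor$, so while it is rare its per-capita death rate is $\delta_2+C_{2,0}(N_0^K/K)\approx\beta_2-S_{20}>\beta_2$ because $S_{20}<0$. Dominating the $2$-population by a subcritical birth and death process then shows it dies out before reaching any level $nK$, so it cannot invade.

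For the main half I would condition on the fate of mutation $1$. By Assumption \ref{asscondinitiale} and \eqref{invichezj}, type $1$ reaches a macroscopic fraction with probability $S_{10}/\beta_1+O_K(\eps)$ and otherwise dies out leaving $N_0^K$ undisturbed; on the latter event mutation $2$ again faces the monomorphic $0$-equilibrium and cannot invade by the first paragraph, which accounts for the factor $S_{10}/\beta_1$. On the invasion event I read off the state at the arrival time $\alpha\log K$ of the second mutant from the phase description of Sections \ref{firstphase}--\ref{thirdphase}: under \eqref{cond1proincrease} (where $S_{01}>0$), Assumption \ref{ass3} places $\alpha\log K$ after the short deterministic phase, so $(N_0^K,N_1^K)/K$ is within $\eps$ of the coexistence equilibrium $(\bar n_{01}^{(0)},\bar n_{01}^{(1)})$; under \eqref{cond1proincrease2} (where $S_{01}<0$) the $0$-population is a decaying subcritical process, so $N_0^K/K\to0$ and $N_1^K/K\to\bar n_1$.

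In both cases the densities at time $\alpha\log K$ make the per-capita growth rate of a rare $2$-individual equal to $S+o(1)$, with $S=S_{201}$ in the coexisting case and $S=S_{21}$ in the type-$1$-dominated case, and $S>0$ by hypothesis. Sandwiching the $2$-population between two near-critical birth and death processes with birth rate $\beta_2$ and net growth rate $S\pm c\eps$ shows that, given the invasion of type $1$, type $2$ reaches $\lfloor\eps K\rfloor$ with probability $S/\beta_2+O_K(\eps)$. Since the fate of type $1$ is already decided when the second mutant appears, conditioning sequentially the two probabilities multiply and give $\frac{S}{\beta_2}\frac{S_{10}}{\beta_1}$. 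It then remains to upgrade "type $2$ reaches $\eps K$" to the full event $I(2,K,n,\beta,V)$, and this is where the remaining sign conditions enter. In case \eqref{cond1proincrease} the flow of the three-dimensional system \eqref{S2}, started near the $0/1$-coexistence point with a small $2$-component and $S_{201}>0$, carries the system to an equilibrium containing type $2$. In case \eqref{cond1proincrease2} the timing inequalities decide the order of events: $0<\alpha-1/S_{10}<1/|S_{01}|-1/S_{21}$ means type $2$ climbs to $\eps K$ while the receding $0$-population is still present, and $S_{12}>0$ (or $S_{12}<0$ with $S_{02}<0$) guarantees that the ensuing dynamics settle at a state containing type $2$; the complementary inequality makes type $0$ vanish first, reducing the endgame to a clean $1/2$ competition with $S_{21}>0$, which needs no further hypothesis.

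The hard part will be this last step in case \eqref{cond1proincrease2}, where the environment seen by the rare mutant $2$ is non-autonomous because the $0$-fraction is still decaying: the coupling birth and death processes then have time-dependent rates, and one must control the survival probability uniformly until $N_2^K$ is of order $\eps K$ and then hand the resulting random configuration to a deterministic Lotka--Volterra comparison to pin down the final state. Matching these two couplings on their overlap, with errors vanishing as $K\to\infty$ and then $\eps\to0$, is the crux of the argument; the single-type estimates \eqref{invichezj}--\eqref{invtime} and the three-dimensional flow analysis recalled in Section \ref{section3dim} are the tools I would assemble for it.
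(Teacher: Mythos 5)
Your proposal is correct and follows essentially the same route as the paper: decompose according to the fate of mutation $1$ (settled before time $\alpha\log K$ since $\alpha>1/S_{10}$), couple the rare type-$2$ population with sub/supercritical birth--and--death processes to get extinction in the $\P^{(2)}$ case and survival probability $S/\beta_2$ in the $\P^{(1,2)}$ case, and use the timing inequalities and remaining sign conditions together with the Lotka--Volterra flow to pin down persistence of type $2$. The paper assembles exactly these ingredients from its Lemmas \ref{lemphase13}, \ref{lemmephase3} and \ref{lemdureephase2n+1ass2} and Equation \eqref{eq2lemmeA}, including the comparison of $\alpha\log K$ with the extinction time of the $0$-population in case \eqref{cond1proincrease2}.
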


Concerning the interplay of invasion and clonal interference, let us mention recent works
which have taken into account the case where 
many beneficial mutations occur before any can fix
\cite{kim2005adaptation,bollback2007clonal,desai2007beneficial,lang2011genetic,lang2013pervasive}.
The authors still assume transitivity of mutant fitnesses, but consider a regime of frequent mutations 
(high mutation rate or very large population).
New mutations constantly occur in individuals already carrying other mutations, in their way of invasion, and the fate of a mutation depends on the 
genetic background of the individual where it occurs more than on its intrinsic advantage.
They argue that this dynamical equilibrium is a way to preserve genetic diversity despite clonal interference, where  the amount of variation results 
in a subtle balance between selection, which reduces it, and new mutations, which increase it.
This approach is interesting and relates on experimental data which confirm that populations with so frequent mutations do exist
\cite{lang2013pervasive}.
However, due to the transitivity assumption, 
the authors need to assume that a large number of mutants co-occur in order to explain the possible coexistence of several types in the population, which is not necessary in our model.\\

\subsection{When beneficial mutations annihilate adaptation?}

An other interesting phenomenon can happen in our model: the occurrence of the second mutation can annihilate the effects of the first one and 
lead to the final fixation of the wild type population $0$ which would have been outcompeted by the first mutation alone.
Such a phenomenon is also impossible in case of transitive fitnesses, 
as in this setting $S_{10}>0$ and $S_{21}>0$ necessarily imply $S_{20}>0$.
Proposition \ref{proannulationeffet} is illustrated in Figure \ref{illuprop5}.

\begin{pro}\label{proannulationeffet}
Let Assumptions \ref{asscondinitiale} and \ref{ass3} hold and suppose that 
$$ S_{01}<0,\quad S_{12}<0<S_{21},\quad S_{20}<0< S_{02}\quad 
\text{and} \quad  \frac{S_{02}}{|S_{12}||S_{01}|}<\alpha -\frac{1}{S_{10}}+\frac{1}{S_{21}}<\frac{1}{|S_{01}|} . $$
Then for every $\eps>0$, there exist two finite constants $\beta(\eps)$ and $V(\eps)$ such that
$$\P^{(1,2)}(\bar{n}_{0}e_0 \in FS(\eps,K,\beta(\eps),V(\eps)))=\frac{S_{10}}{\beta_1}\frac{S_{21}}{\beta_2}+O_K(\eps) .$$
\end{pro}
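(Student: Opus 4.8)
\emph{Strategy.} Under the stated signs $S_{01}<0<S_{10}$, $S_{12}<0<S_{21}$ and $S_{20}<0<S_{02}$ one has $0\prec 1\prec 2\prec 0$: the three types form a Rock--Paper--Scissors cycle, and the proof amounts to following this cycle through exactly one turn, triggered by the successive appearances of types $1$ and $2$. The plan is to decompose the trajectory into the alternation of stochastic phases of duration of order $\log K$ (a rare type growing or decaying exponentially in the background of the current dominant type) and deterministic phases of duration of order $1$ (two types simultaneously of order $K$), exactly as in the one-mutation sweep recalled after Assumption~\ref{asscondinitiale}. On each stochastic phase I would couple the rare subpopulation from above and from below with linear birth--death processes whose individual birth and death rates are read off from \eqref{deathrate} with the background frozen at its equilibrium, up to an $O(\eps)$ perturbation; on each deterministic phase I would invoke the comparison with the Lotka--Volterra system \eqref{S} through Theorem~2.1 of \cite{ethiermarkov}. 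The convenient bookkeeping is to track, for each type present in numbers $\ll K$, the exponent $g_i(u):=\log N_i^K(u\log K)/\log K$ along the slow time $t=u\log K$: $g_i$ moves with slope equal to the prevailing invasion fitness (or its opposite) while type $i$ is rare in the current background, and type $i$ reaches order $K$ when $g_i$ hits $1$.

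\emph{The cycle.} By Assumption~\ref{asscondinitiale} and the one-mutant estimates \eqref{invichezj}--\eqref{invtime} applied with $(i,j)=(1,0)$, on an event of asymptotic probability $S_{10}/\beta_1$ the type-$1$ population reaches $\lfloor\eps K\rfloor$ at time $(1+O(\eps))\log K/S_{10}$ and, since $S_{01}<0$, the ensuing deterministic phase drives $(N_0^K,N_1^K)/K$ near $(\eps^2,\bar n_1)$; thereafter, for $u>1/S_{10}$, type $0$ decays in the type-$1$ background at rate $|S_{01}|$, so $g_0(u)=1-|S_{01}|(u-1/S_{10})$. When mutation $2$ appears, at $u=\alpha>1/S_{10}$, type $1$ sits near $\bar n_1 K$ while type $2$, being rare with both type $0$ and itself of order $\ll K$, behaves like a birth--death process of net rate $S_{21}>0$; hence on a further event of asymptotic probability $S_{21}/\beta_2$, independent of the previous one since it is decided on the disjoint window $[\alpha\log K,(\alpha+1/S_{21})\log K]$, type $2$ reaches $\lfloor\eps K\rfloor$ at $u=\alpha+1/S_{21}$ and (as $S_{12}<0<S_{21}$) takes over from type $1$. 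At that instant $g_0=1-|S_{01}|(\alpha+1/S_{21}-1/S_{10})$, which is $>0$ \emph{precisely} by the right-hand inequality $\alpha-1/S_{10}+1/S_{21}<1/|S_{01}|$: type $0$ is still present. For $u>\alpha+1/S_{21}$ type $1$ is subcritical in the type-$2$ background ($S_{12}<0$) and goes extinct at $u=\alpha+1/S_{21}+1/|S_{12}|$, while type $0$, now supercritical there ($S_{02}>0$), grows at rate $S_{02}$ and regains order $K$ at $u=\alpha+1/S_{21}+|S_{01}|(\alpha+1/S_{21}-1/S_{10})/S_{02}$.

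\emph{Where the inequalities bite, and conclusion.} The left-hand inequality $S_{02}/(|S_{12}||S_{01}|)<\alpha-1/S_{10}+1/S_{21}$ is exactly the statement that the extinction time of type $1$ precedes the time at which type $0$ returns to order $K$; this is the crux, for although $S_{10}>0$ would let type $1$ re-invade a type-$0$ equilibrium, type $1$ is by then extinct and cannot reappear. Once type $0$ is again of order $K$ with type $1$ gone, the deterministic phase of \eqref{S} for $(i,j)=(0,2)$ (with $S_{20}<0<S_{02}$) sends type $2$ down and type $0$ to $\bar n_0 K$; type $2$ is thereafter subcritical in the monomorphic type-$0$ population and dies out, so both mutants are extinct and $N^K/K$ stays within $\eps$ of $\bar n_0 e_0$ for a time $e^{V(\eps)K}$ by the standard stability estimate for a monomorphic birth-and-death-competition process (as in \cite{champagnat2006microscopic}); that is, $\bar n_0 e_0\in FS(\eps,K,\beta(\eps),V(\eps))$ once $\beta(\eps)$ is chosen larger than the (finite) rescaled return time above. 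This pathway occurs exactly on the intersection of the two establishment events, whence the announced probability $(S_{10}/\beta_1)(S_{21}/\beta_2)+O_K(\eps)$; on the complementary events one checks that either type $1$ never sweeps or type $1$ fixes, so the nontrivial annihilation of adaptation quantified by the statement does not occur.

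\emph{Main obstacle.} The delicate point is the two-sided control of the residual type-$0$ population through two successive dominance windows: it must decay at exactly rate $|S_{01}|$ while type $1$ rules, then switch to growth at exactly rate $S_{02}$ once type $2$ rules, with the hand-over of dominance from type $1$ to type $2$ --- a genuine three-type transition in which none of the three subpopulations is negligible during the short deterministic phase --- not disturbing this bookkeeping. Propagating the birth--death couplings for type $0$ across this transition, with rates perturbed by $O(\eps)$ and with the background itself only approximately at its Lotka--Volterra equilibrium, and then converting the two sharp inequalities of the statement into the correct strict ordering between the extinction time of type $1$ and the regrowth time of type $0$, is where the main work lies.
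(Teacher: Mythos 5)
Your argument is correct and is essentially the paper's own: the authors explicitly reduce Proposition \ref{proannulationeffet} to the phase-by-phase outline used for Proposition \ref{prolizards}, and your decomposition into stochastic and deterministic phases, the exponent bookkeeping for the residual type-$0$ population, and the identification of the right-hand inequality with the survival of type $0$ until $T^{(K,2)}_{\eps K}$ and of the left-hand inequality with the extinction of type $1$ before type $0$ regains order $K$ reproduce exactly what Lemmas \ref{lemphase13}, \ref{lemmephase3}, \ref{lemmephase2n2dim}, \ref{lemmephase2n+1} and \ref{lemdureephase2n+1ass} deliver. The only caveat (shared with the paper's own statement) is that on the event where the first mutant fails to establish the population also ends near $\bar{n}_0 e_0$, so the displayed probability is really that of the non-trivial annihilation pathway rather than of the literal event $\{\bar{n}_0 e_0 \in FS\}$.
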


\begin{figure}[h]
\centering
\includegraphics[width=10cm,height=5cm]{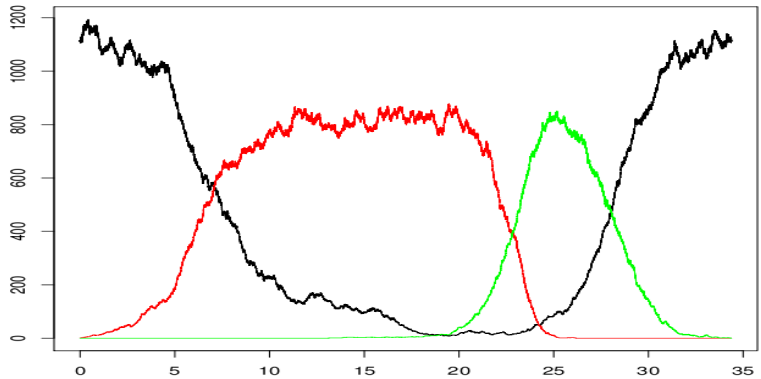}
\caption{Illustration of Proposition \ref{proannulationeffet}. 
The y-axis corresponds to the three type populations sizes ($0$ in black, $1$ in red and $2$ in green), and the x-axis to the time. 
In these simulations, 
$K=1000$, $(\beta_i, \delta_i) =(2,0), i \in \{0,1,2\}$, $C_{0,0}=1.8,  
C_{1,0}=C_{2,1}=1, C_{1,1}=2.3, C_{1,2}=2 C_{0,1}= 5, 
C_{2,0}=2 C_{0,2}=3, C_{2,2}=2.1$, and $\alpha=1.9$.}
\label{illuprop5}
\end{figure}

\subsection{At the origin of the Rock-Paper-Scissors cycles}\label{resRPS}
Rock-Paper-Scissors (RPS) is a children's game where rock beats scissors, which beat paper, which in turn beats 
rock. Such competitive interactions between morphs or species in nature can lead to cyclical dynamics, and 
have been documented in various ecological systems 
\cite{buss1979competitive,taylor1990complex,sinervo1996rock,kerr2002local,kirkup2004antibiotic,cameron2009parasite,nahum2011evolution}.
Let us describe two examples of such cycles. The first one \cite{sinervo1996rock} is concerned with pattern of 
sexual selection on some male lizards. 
Males is associated to their throat colours, which have three morphs.
Type $0$ individuals (orange throat) are monogamous and very aggressive.
They control a large territory.
Type $1$ individuals (dark-blue throat) are
polygamous and less efficient in defending their territory, which is smaller, having to split their efforts on several females. 
Finally type $2$ individuals (prominent yellow stripes on the throat, similar to 
receptive females) do not engage in female-guarding behavior 
but roam around in search of sneaky matings. As a consequence of these different strategies, the type $0$ outcompetes the type $1$, which outcompetes 
the type $2$, which in turn outcompetes the type $0$.
The second example \cite{kirkup2004antibiotic} is concerned with the interactions 
between three strains of \textit{Escherichia coli} bacteria. Type $0$ individuals release toxic
colicin and produce an immunity protein. Type $1$ individuals produce the immunity protein only. Type $2$ individuals produce neither 
toxin nor immunity. Then type $0$ is defeated by type $1$ (because of the cost of toxic colicin production), which is defeated by type $2$, 
(because of the cost of immunity protein production), which in turn is defeated by type $0$ (not protected against toxic colicin).

Neumann and Schuster \cite{neumann2007continuous} modeled such interactions by a three dimensional 
competitive Lotka-Volterra 
system. In particular they proved that migrations or recurrent mutations were not necessary ingredients to obtain limit cycles, as it was 
assumed in previous models (see \cite{czaran2002chemical,schreiber2013spatial} for example).
They studied the long time behaviour of the system but payed little attention to initial conditions, only assuming ``the presence of all three strains in one 
homogeneous medium''. But the question of initial conditions is crucial. Indeed, how to explain the appearance of such cycles 
whereas when only two strains are present one of them is outcompeted by the other one and disappears?
Our simple model provides a framework explaining how a cyclical RPS dynamics emerges in an ecological system thanks to the 
interplay of two successive mutations. Proposition \ref{prolizards} is illustrated in Figure \ref{RPScycles}.

\begin{pro}\label{prolizards}
Let Assumptions \ref{asscondinitiale} and \ref{ass3} and the following inequalities hold: 
\begin{equation}\label{cond1RPS}
 S_{01}<0<S_{10}, \quad S_{12}<0<S_{21}, \quad S_{20}<0<S_{02},
\end{equation}
and
\begin{equation}\label{cond2RPS}
 0< \alpha -\frac{1}{S_{10}}<\min \Big(\frac{1}{|S_{01}|}, \frac{S_{02}}{|S_{12}||S_{01}|},\frac{S_{02}S_{10}}{|S_{12}||S_{01}||S_{20}|}  \Big)-\frac{1}{S_{21}}.
\end{equation}  
Then for every $l \in \N$, if we call ``cycle'' the interval between two local (non null) maxima of the type $1$ 
population between which type $2$ and 
type $0$ populations also hit one local (non null) maximum, and $\mathcal{N}$ the number of cycles
$$\lim_{K \to \infty}\P^{(1,2)}(\mathcal{N}\geq l)=\frac{S_{10}}{\beta_1}\frac{S_{21}}{\beta_2}.$$
Moreover, if we denote by $D_l$ the duration of the $k$th cycle and introduce: 
$$ T(\alpha,l,K):=\Big(\alpha -\frac{1}{S_{10}}+\frac{1}{S_{21}} \Big) \Big(1+\frac{|S_{01}|}{S_{02}} +\frac{|S_{01}||S_{12}|}{S_{02}S_{10}} \Big)
\Big(\frac{|S_{01}||S_{12}||S_{20}|}{S_{02}S_{21}S_{10}} \Big)^{l-1}\log K,$$
then $D_k$ satifies for a finite $c$
$$\P^{(1,2)}\Big(\{\mathcal{N}\geq l\}, (1-c\eps)T(\alpha,l,K)<D_k<(1+c\eps)T(\alpha,l,K)\Big)=
\frac{S_{10}}{\beta_1}\frac{S_{21}}{\beta_2}+O_K(\eps).$$
\end{pro}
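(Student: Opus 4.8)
The plan is to exploit the by-now standard decomposition of the trajectory into an alternation of stochastic phases of duration of order $\log K$, during which exactly one type sits near its monomorphic equilibrium while the two others are either growing from a small size or decaying toward extinction, and short deterministic phases of duration $O(1)$ during which two types are simultaneously of order $K$ and the rescaled process follows the competitive Lotka--Volterra flow \eqref{S2}. I would first reduce the whole statement to the propagation of a single bookkeeping quantity: for each type and each of its visits, the \emph{depth} $u$ obtained by writing the current size as $K^{1-u}$, so that $u=1$ is a single individual and $u=0$ a size of order $K$. The building blocks proved in Sections~\ref{firstphase}--\ref{thirdphase} turn, in the limit $K\to\infty$, into three elementary rules. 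A disadvantageous subdominant type last seen at size $\lfloor\eps K\rfloor$ reaches depth $u$ after a time $(u/|S|)\log K$, where $|S|$ is the magnitude of the relevant negative invasion fitness (comparison with a subcritical birth--death process); an advantageous invader starting at depth $u$ reaches $\lfloor\eps K\rfloor$ after a time $(u/S)\log K$ (coupling with a supercritical birth--death process below $\lfloor\eps K\rfloor$, then comparison with \eqref{S} above it); and the order-$K$ crossover between the outgoing resident and the incoming invader is resolved by the Lotka--Volterra flow in time $O(1)$ via Theorem~2.1 p.~456 of \cite{ethiermarkov}, hence negligibly on the $\log K$ scale.

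I would begin the chronology with the fact that, under Assumptions \ref{asscondinitiale} and \ref{ass3}, the first mutant reaches $\lfloor\eps K\rfloor$ with probability $S_{10}/\beta_1+O_K(\eps)$ in time $(1+O_K(\eps))\log K/S_{10}$ by \eqref{invichezj}--\eqref{invtime}, after which the flow \eqref{S} with $(i,j)=(0,1)$ carries type $1$ to $\bar n_1 K$ and, since $S_{01}<0$, starts the decay of type $0$ at rate $|S_{01}|$. Because $\alpha>1/S_{10}$, the second mutant appears in an essentially monomorphic $1$-resident population, and since $S_{21}>0$ it establishes, conditionally on the first establishment, with probability $S_{21}/\beta_2+O_K(\eps)$; the two establishment events are governed by the macroscopically separated time windows $[0,\approx\log K/S_{10}]$ and $[\alpha\log K,\approx\alpha\log K+\log K/S_{21}]$ and the corresponding branching approximations are independent in the limit, so their contributions multiply to give the prefactor $\tfrac{S_{10}}{\beta_1}\tfrac{S_{21}}{\beta_2}$. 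On the complementary event at least one mutant dies out during its initial stochastic phase and no cycle is produced.

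On the event that both mutations establish, I would run the three depth-rules around the loop $0\to1\to2\to0$ dictated by \eqref{cond1RPS}. Writing $A:=\alpha-1/S_{10}+1/S_{21}$, at the instant type $2$ overtakes type $1$ the depth of type $0$ equals $|S_{01}|A$ (type $0$ decays at rate $|S_{01}|$ between the moment type $1$ reaches $\lfloor\eps K\rfloor$, at time $\approx\log K/S_{10}$, and the moment type $2$ reaches $\lfloor\eps K\rfloor$, at time $\approx\alpha\log K+\log K/S_{21}$). Since $S_{02}>0$, type $0$ re-invades from this depth while type $1$ decays at rate $|S_{12}|$; tracking the depths once around the loop shows that one full rotation multiplies every depth by $r:=\tfrac{|S_{01}||S_{12}||S_{20}|}{S_{02}S_{21}S_{10}}$, and that the $l$-th cycle is the sum of its three sub-phase durations, namely $A\bigl(1+|S_{01}|/S_{02}+|S_{01}||S_{12}|/(S_{02}S_{10})\bigr)r^{l-1}\log K=T(\alpha,l,K)$, the $(1\pm c\eps)$ error propagating from the $O_K(\eps)$ of the phase estimates. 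The hypotheses \eqref{cond2RPS} are exactly what is needed: read through $A$ they state $|S_{01}|A<1$, $\tfrac{|S_{01}||S_{12}|}{S_{02}}A<1$ and $\tfrac{|S_{01}||S_{12}||S_{20}|}{S_{02}S_{10}}A<1$, i.e. all depths reached during the first rotation stay $<1$ so every type survives, and combined with $A>1/S_{21}$ they force $r<1$, so the depths shrink geometrically, no type ever goes extinct, and the cyclic dynamics continues indefinitely. Consequently, for each fixed $l$ the event $\{\mathcal N\ge l\}$ coincides up to a vanishing-probability event with the joint establishment of both mutations, which gives $\lim_K\P^{(1,2)}(\mathcal N\ge l)=\tfrac{S_{10}}{\beta_1}\tfrac{S_{21}}{\beta_2}$.

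The hard part will be the uniform control of the decaying subpopulations and of the re-seeding mechanism across a number of phases which, although fixed, must be handled with errors that do not accumulate past the $(1\pm c\eps)$ tolerance. Concretely, one must show that a type which has decayed to size $K^{1-u}$ with $u<1$ stays sandwiched between subcritical birth--death processes started from that size until it turns advantageous again, so that it neither dies out prematurely nor drifts from the predicted depth, and that it then restarts its supercritical growth from precisely that size; coupling the true three-type process from above and below by independent birth--death processes on each successive $\log K$-window, and re-initialising those couplings at the $O(1)$ Lotka--Volterra crossovers \emph{without} losing the exponent, is the delicate step. Verifying that each deterministic transition delivers the next invader at exactly the depth output by the decay rule, and that the trajectory genuinely enters the neighbourhood of the next axial equilibrium where \cite{ethiermarkov} applies, is what ties the successive phases together and carries the geometric recursion cleanly through the first $l$ cycles.
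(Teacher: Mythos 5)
Your proposal is correct and follows essentially the same route as the paper: the same alternation of $\log K$ stochastic phases (birth--death couplings) and $O(1)$ Lotka--Volterra crossovers, the same computation of the subdominant population sizes as powers $K^{1-u}$ (the paper's exponents $|S_{01}|A$, $\tfrac{|S_{12}||S_{01}|}{S_{02}}A$, $\tfrac{|S_{20}||S_{12}||S_{01}|}{S_{10}S_{02}}A$ with $A=\alpha-1/S_{10}+1/S_{21}$), the same geometric recursion with ratio $r=\tfrac{|S_{01}||S_{12}||S_{20}|}{S_{02}S_{21}S_{10}}$ established by induction, and the same reading of \eqref{cond2RPS} as guaranteeing that no depth reaches $1$ and that $r<1$. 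Your ``depth'' bookkeeping is just a clean repackaging of the paper's phase-by-phase size and duration estimates (Lemmas \ref{lemmephase2n+1}--\ref{lemdureephase2n+1ass2}), and the delicate coupling issues you flag are exactly those the paper delegates to those lemmas.
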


\begin{figure}[h]
\centering
\includegraphics[width=10cm,height=5cm]{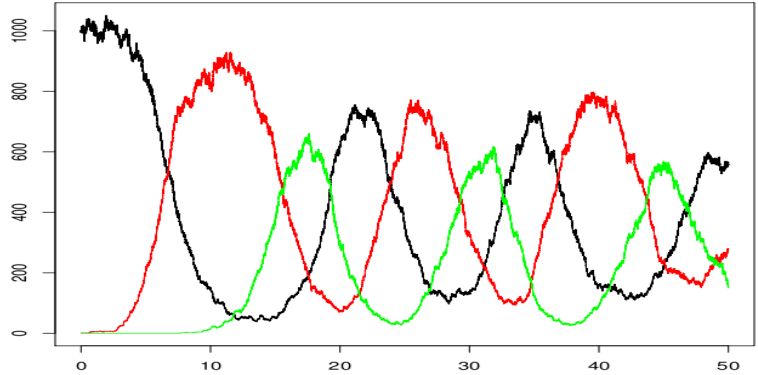}
\caption{RPS cycles. 
The y-axis corresponds to the three type population sizes ($0$ in black, $1$ in red and $2$ in green), and the x-axis to the time. 
In this simulation, 
$K=1000$, $(\beta_i, \delta_i) =(2,0), i \in \{0,1,2\}$, $C_{0,0}=C_{1,1}=C_{2,2}=2, C_{0,1}=2.5, C_{0,2}=C_{1,0}=C_{2,1}=1, 
C_{1,2}= C_{2,0}=3$, and $\alpha=1.1$.}
\label{RPScycles}
\end{figure} 

\begin{rem}\label{remarkRPS}
 We say that System \eqref{S2} is permanent if there exists a compact attractor $\mathcal{B} \subset int(\R_+^3) $ of its solutions, whose 
basin of attraction is $int(\R_+^3) $. Under Condition \eqref{cond1RPS},
Theorem 2 in \cite{neumann2007continuous} states that \eqref{S2} is permanent if and only if 
$$ |S_{01}||S_{12}||S_{20}|<S_{02}S_{21}S_{10}.$$
This condition is satisfied under the assumptions of Proposition \ref{prolizards}.
Hence if one of the types gets extinct, this is due to the demographic stochasticity and not to the behaviour of the approximating dynamical system.

Moreover, we are not able to know in general if the interior fixed point is globally attracting of if there exist stable periodic orbits for the flows 
of the three dimensional deterministic Lotka-Volterra system. In Appendix \ref{knownresults} we give two examples of systems satisfying 
the conditions of Proposition \ref{prolizards} but with distinct long time behaviours.
\end{rem}

\section{First phase}\label{firstphase}

In Sections \ref{firstphase} to \ref{thirdphase} we describe the dynamics of the successive phases.
For sake of readability, we do not indicate anymore the superscript $^{(k)}$ of the probabilities. 
The context will make clear the mutations which occur.
The first phase is rigorously defined as follows:
\begin{equation*}
 \text{Phase $1$}:=\{t\geq 0, N_1^K(t)+N_2^K(t)\geq 1, \sup_{i \in \{1,2\}}N_i^K(t)< \lfloor \eps K \rfloor\}.
\end{equation*}

Under Assumption \ref{ass3} the second mutation occurs during the third phase. This corresponds to the case already studied in 
\cite{champagnat2006microscopic} and we will recall the outcomes in this case in Section \ref{phase1Ass3}. Under Assumption \ref{ass1} or \ref{ass2} the second mutation 
occurs during the first phase and we have to study the resulting dynamics during the first phase.

\subsection{Assumption \ref{ass1}}
Let $i$, $j$ and $k$ be distinct in $\{0,1,2\}$, $\eps>0$ and $K \in \N$.
We introduce a finite subset of $\N$ containing 
the equilibrium size of a monomorphic $i$-population,
\begin{equation} \label{compact}I_\eps^{(K,i)}:= \Big[K\Big(\bar{n}_i-2\eps \frac{C_{i,j}+C_{i,k}}{C_{i,i}}\Big),K\Big(\bar{n}_i+2\eps \frac{C_{i,j}+C_{i,k}}{C_{i,i}}\Big)\Big] \cap \N , \end{equation}
and the stopping times $T^{(K,i)}_a$, $T^{(K,ij)}_a$ and $\tilde{T}^{(K,i)}_\eps$, which denote respectively the hitting time of size
$\lfloor a \rfloor$ for $a \in \R_+$ by 
the population of type $i$ and by the total population of types $i$ and $j$, and the exit time of $I_\eps^{(K,i)}$ by the population of type $i$,
\begin{equation} \label{TKTKeps} T^{(K,i)}_a := \inf \Big\{ t \geq 0, N^K_i(t)= \lfloor a \rfloor \Big\}, \end{equation}
 \begin{equation}
 T^{(K,ij)}_a := \inf \Big\{ t \geq 0, N_i^K(t)+N^K_j(t)= \lfloor a \rfloor \Big\}, \end{equation}
\begin{equation} \label{TKTKeps1} \tilde{T}^{(K,i)}_\eps := \inf \Big\{ t \geq 0, N^K_i(t)\notin I_\eps^{(K,i)} \Big\}.  \end{equation}

Finally, we introduce a finite subset of $\N$ which may contain the type $2$ population size at the end of the first phase.
\begin{equation}\label{defJepsK}
 J_\eps^{(K,2)}:=\Big[K^{S_{20}(\frac{1}{S_{10}}-\alpha-\eps)},
K^{S_{20}(\frac{1}{S_{10}}-\alpha+\eps)}\Big]\cap \N,
\end{equation}
where we recall that $\alpha \log K$ is the time of occurrence of the second mutation, 
and the invasion fitnesses have been defined in \eqref{deffitinv}.
Then we have the following possible states with positive probability at the end of the first phase.

\begin{lem}\label{lemphase11}
Under Assumptions \ref{asscondinitiale} and \ref{ass1}, there exists a positive constant $M_1$ such that
\begin{eqnarray*}
& (a)&  \P(T^{(K,i)}_0<T^{(K,i)}_{\eps K}<\tilde{T}^{(K,0)}_\eps, \forall i \in \{1,2\})=
\Big(1-\frac{S_{10}}{\beta_1}\Big)\Big(1-\frac{S_{20}}{\beta_2}\Big)+O_K(\eps), \nonumber \\
&(b) &  \P(T^{(K,i)}_{\eps K}<T^{(K,i)}_0<\tilde{T}^{(K,0)}_\eps,T^{(K,j)}_0<T^{(K,j)}_{\eps K}<\tilde{T}^{(K,0)}_\eps)=
\frac{S_{i0}}{\beta_i}\Big(1-\frac{S_{j0}}{\beta_j}\Big)+O_K (\eps) , i \neq j \in \{1,2\}, \nonumber \\
&(c) &  \P\Big(T^{(K,1)}_{\eps K}<\tilde{T}^{(K,0)}_\eps,N_2^K(T^{(K,1)}_{\eps K}) \in J_{M_1\eps}^{(K,2)}\Big)=
\frac{S_{10}}{\beta_1}\frac{S_{20}}{\beta_2}+O_K(\eps).
\end{eqnarray*}
\end{lem}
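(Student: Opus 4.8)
The strategy is to use the fact that during the whole first phase both mutant populations stay below $\lfloor\eps K\rfloor$ while the wild type remains close to its equilibrium $\bar{n}_0 K$, so that each mutant behaves like an almost autonomous supercritical birth and death process feeling only the type $0$ background. First I would control the wild type: as long as $N_1^K+N_2^K<\lfloor\eps K\rfloor$, the per capita death rate of type $0$ equals $\delta_0+C_{0,0}N_0^K/K+O(\eps)$, so that $N_0^K$ is a logistic birth and death process perturbed by an $O(\eps)$ term, which, started from $\lfloor\bar{n}_0K\rfloor$, stays in $I_\eps^{(K,0)}$ up to a time of order $e^{VK}$ with probability $1-O_K(\eps)$. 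This is exactly the type of estimate proved in \cite{champagnat2006microscopic}, which I would invoke. On this event the per capita death rate of a type $i\in\{1,2\}$ individual equals $\delta_i+C_{i,0}\bar{n}_0+O(\eps)=\beta_i-S_{i0}+O(\eps)$, the cross-competition terms $C_{i,j}N_j^K/K$ being themselves $O(\eps)$.

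This allows me to sandwich each $N_i^K$, up to $\tilde{T}^{(K,0)}_\eps\wedge T^{(K,i)}_{\eps K}$, between two linear birth and death processes $Z_i^{\pm}$ with birth rate $\beta_i$ and death rate $\beta_i-S_{i0}\mp c\eps$; and since the cross terms are $O(\eps)$, I would build the couplings so that the pairs $Z_1^{\pm}$ and $Z_2^{\pm}$ are independent. For each $Z_i^{\pm}$ the classical theory of supercritical birth and death processes (as in \cite{MR2047480}) gives an extinction probability $1-(S_{i0}\pm c\eps)/\beta_i$, a Malthusian limit $Z_i^{\pm}(t)e^{-(S_{i0}\pm c\eps)t}\to W_i^{\pm}$ positive on survival, and a hitting time of $\eps K$ equal to $\log K/S_{i0}+O(1)$. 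Items $(a)$ and $(b)$ then follow by multiplying the extinction and survival probabilities of the two approximately independent lineages; the gaps between $Z_i^-$ and $Z_i^+$, the independence approximation and the wild-type control all collect into a single $O_K(\eps)$ error satisfying \eqref{defOKeps}.

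The delicate point is item $(c)$. Assumption \ref{ass1} is exactly what forces, on the event that both lineages survive, type $1$ to reach $\eps K$ strictly before type $2$: indeed $\alpha<1/S_{10}$ places the birth of type $2$ before $T^{(K,1)}_{\eps K}\approx\log K/S_{10}$, while $\alpha>1/S_{10}-1/S_{20}$ yields $\log K/S_{10}<\alpha\log K+\log K/S_{20}$. At time $T^{(K,1)}_{\eps K}$ the type $2$ lineage, born at $\alpha\log K$, has grown for a duration $(1/S_{10}-\alpha)\log K+O(1)$, so by its Malthusian behaviour its size is of order $K^{S_{20}(1/S_{10}-\alpha)}$; note that $\alpha>1/S_{10}-1/S_{20}$ gives $S_{20}(1/S_{10}-\alpha)<1$, confirming that type $2$ is indeed still below $\eps K$. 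The main obstacle is to show that this size lies in the polynomial window $J_{M_1\eps}^{(K,2)}$. The $\pm c\eps$ perturbations of the growth rates in the sandwich shift both the exponent $S_{20}(1/S_{10}-\alpha)$ and the hitting time $T^{(K,1)}_{\eps K}$ by amounts linear in $\eps\log K$, hence multiply the type $2$ size by factors $K^{O(\eps)}$; combined with the bounded positive martingale limits $W_1,W_2$ and the $O(1)$ fluctuation of $T^{(K,1)}_{\eps K}-\log K/S_{10}$, these give a perturbation of the exponent by $O(\eps)$, which a large enough constant $M_1$ absorbs. Making all of these errors combine into a single $O_K(\eps)$ term satisfying \eqref{defOKeps} requires quantitative moment control on the $W_i^{\pm}$ and on $T^{(K,1)}_{\eps K}-\log K/S_{10}$, and this is where the real work lies.
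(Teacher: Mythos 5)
Your proposal is correct and follows essentially the same route as the paper: control the resident population so it stays in $I_\eps^{(K,0)}$, sandwich each mutant between independent linear birth and death processes whose rates are $O(\eps)$-perturbations of $\beta_i$ and $\beta_i-S_{i0}$, read off $(a)$ and $(b)$ from the product of the classical extinction/survival probabilities, and for $(c)$ use the Malthusian growth of type $2$ over the duration $(1/S_{10}-\alpha)\log K$ together with the observation that all $\eps$-perturbations of rates and hitting times only shift the exponent by $O(\eps)$, which the window $J_{M_1\eps}^{(K,2)}$ absorbs. The only cosmetic difference is that the paper establishes lower bounds for the three disjoint events and concludes by noting they sum to $1+O_K(\eps)$, and handles the final step of $(c)$ via the hitting-time identities \eqref{hitting_times} and \eqref{equi_hitting} rather than via moment control of the martingale limits $W_i$, but these are interchangeable implementations of the same argument.
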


\begin{proof}
In the vein of Fournier and M\'el\'eard \cite{fournier2004microscopic} we represent the population process in terms 
of Poisson measures. Let $Q_0(ds,d\theta)$, $Q_1(ds,d\theta)$ and $Q_2(ds,d\theta)$ be three independent 
Poisson random measures on $\R_+^2$ with intensity $ds d\theta$, and recall that $(e_{i}, i\in \{0,1,2\} )$ is the canonical 
basis of $\R^3$.
Then the process $N^K$ can be written as follows:
\begin{multline}\label{defN}
N^K(t)= N^K(0)+
\underset{i=0}{\overset{1}{\sum}}\int_0^t\int_{\R_+} e_i \Big[ \mathbf{1}_{\theta \leq \beta_iN_i^K(s^-)}-
\mathbf{1}_{0<\theta-\beta_iN_i^K(s^-)\leq d_i^K(N^K({s^-}))}\Big] Q_i(ds,d\theta) \\
  +  e_2\mathbf{1}_{t\geq  \alpha \log K} \Big(1+ \int_{\alpha \log K}^t\Big[ \mathbf{1}_{\theta \leq \beta_2N_2^K(s^-)}-
\mathbf{1}_{0<\theta-\beta_2N_2^K(s^-)\leq d_2^K(N^K({s^-}))}\Big] Q_2(ds,d\theta) \Big) ,
\end{multline}
where $(d_i^K, i \in \{0,1,2\})$ have been defined in \eqref{deathrate}.
The idea is to couple the population process with birth and death processes to get bounds on the different hitting times.
To this aim, let us introduce approximations of the so called rescaled invasion fitnesses ${S_{ij}}/{\beta_i}$,
for $i \in \{1,2\}$ and $\eps>0$ small enough:
\begin{equation}\label{defsi0-}
 s_{i0}^{(\eps,-)}:= \frac{1}{\beta_i}\Big(\beta_i-\delta_i -C_{i,0}\Big( \bar{n}_0+2\eps \frac{C_{0,1}+C_{0,2}}{C_{0,0}} \Big)-(C_{i,1}+C_{i,2})\eps\Big)
\end{equation}
\begin{equation}\label{defsi0+}
 s_{i0}^{(\eps,+)}:=\frac{1}{\beta_i}\Big( \beta_i-\delta_i -C_{i,0}\Big( \bar{n}_0-2\eps \frac{C_{0,1}+C_{0,2}}{C_{0,0}} \Big)\Big).
\end{equation}
These real numbers satisfy for $\eps$ small enough
\begin{equation}\label{diffs+s-} 0<s_{i0}^{(\eps,-)}\leq \frac{S_{i0}}{\beta_i}\leq s_{i0}^{(\eps,+)}<1, \quad \text{and} \quad \Big|s_{i0}^{(\eps,+)}-s_{i0}^{(\eps,+)}\Big| \leq
\frac{1}{\beta_i} \Big(4\frac{C_{0,1}+C_{0,2}}{C_{0,0}}+C_{i,1}+C_{i,2} \Big) \eps .\end{equation}
Thanks to these definitions we can introduce, for $* \in \{-,+\}$ the processes 
\begin{multline*}
N_0^{(\eps,*)}(t)=\lfloor\bar{n}_0 K\rfloor+
\int_0^t\int_{\R_+}  \Big[ \mathbf{1}_{\theta \leq \beta_0N_0^{(\eps,*)}(s^-)}-\\
\mathbf{1}_{0<\theta-\beta_0N_0^{(\eps,*)}(s^-)\leq (\delta_0+C_{0,0}N_0^{(\eps,*)}(s^-)+\mathbf{1}_{\{*=-\}}\eps(C_{0,1}+C_{0,2}))
N_0^{(\eps,*)}(s^-)}\Big] Q_0(ds,d\theta),
\end{multline*}
and the supercritical birth and death processes
\begin{equation}\label{defN1star}
N_1^{(\eps,*)}(t)=1+
\int_0^t\int_{\R_+}  \Big[ \mathbf{1}_{\theta \leq \beta_1N_1^{(\eps,*)}(s^-)}-
\mathbf{1}_{0<\theta-\beta_1N_1^{(\eps,*)}(s^-)\leq (1-s_{10}^{(\eps,*)})\beta_1N_1^{(\eps,*)}(s^-)}\Big] Q_1(ds,d\theta),
\end{equation}
and 
\begin{equation}\label{defN2star}
N_2^{(\eps,*)}(t)= \mathbf{1}_{t\geq  \alpha \log K}\Big(1+
\int_{\alpha \log K}^t\int_{\R_+}  \Big[ \mathbf{1}_{\theta \leq \beta_2N_2^{(\eps,*)}(s^-)}-
\mathbf{1}_{0<\theta-\beta_2N_2^{(\eps,*)}(s^-)\leq (1-s_{20}^{(\eps,*)})\beta_2N_2^{(\eps,*)}(s^-)}\Big] Q_2(ds,d\theta)\Big).
\end{equation}
Then recalling Definition \eqref{TKTKeps} we get
\begin{equation}\label{couplage0} N_0^{(\eps,-)} (t)\leq N_0^{K}(t)\leq N_0^{(\eps,+)}(t) \quad \text{a.s.}\quad \forall \ t < 
T_{\eps K}^{(K,1)}\wedge T_{\eps K}^{(K,2)}  
\end{equation}
and for $i \in \{1,2\}$,
\begin{equation}\label{couplage12} N_i^{(\eps,-)}(t)\leq N_i^{K}(t)\leq N_i^{(\eps,+)}(t) \quad \text{a.s.}\quad \forall \ t < 
T_{\eps K}^{(K,1)}\wedge T_{\eps K}^{(K,2)}   \wedge \tilde{T}^{(K,0)}_\eps. \end{equation}
Moreover by construction for every $(*_1,*_2) \in \{-,+\}^2$ the processes $N_1^{(\eps,*_1)}$ and $N_2^{(\eps,*_2)}$ are independent.\\

To prove Lemma \ref{lemphase11} we first need to show that with a probability close to one, Couplings \eqref{couplage0} and \eqref{couplage12} 
hold during the whole first phase;
to this aim we first prove the following asymptotical result:
\begin{equation}\label{Ttildeinf2eps0}
\liminf_{K \to \infty} \P(A_\eps^K)\geq 1- c \eps
\end{equation}
for a finite $c$ and $\eps$ small enough, where 
\begin{equation}
 \label{defAepsK} A_\eps^K:= \{T_{\eps K}^{(K,1)}\wedge T_{\eps K}^{(K,2)}   \wedge T_{0}^{(K,12)} \leq \tilde{T}^{(K,0)}_\eps\}.
\end{equation}
First applying \eqref{eq2lemmeA} we get the existence of a positive $V$ such that 
for $* \in \{-,+\}$
\begin{equation}\label{defV} \lim_{K \to \infty}\P(\tilde{T}_\eps^{(K,*)}>e^{VK})=1, \end{equation}
where 
\begin{equation} \tilde{T}^{(K,*)}_\eps := \inf \Big\{ t \geq 0, N^{(\eps,*)}_0(t)\notin I_\eps^{(K,0)} \Big\}.  \end{equation}

We divide the probability in \eqref{Ttildeinf2eps0} into two parts according to the position of $\tilde{T}^{(K,0)}_\eps$ 
with respect to $e^{VK}$:
\begin{eqnarray}\label{decoupage1}1 -\P(A_\eps^K) &=&\P(\tilde{T}^{(K,0)}_\eps< T_{\eps K}^{(K,1)}\wedge T_{\eps K}^{(K,2)}   \wedge T_{0}^{(K,12)} ) \\
&=&
\P(\tilde{T}^{(K,0)}_\eps\leq e^{VK},\tilde{T}^{(K,0)}_\eps< T_{\eps K}^{(K,1)}\wedge T_{\eps K}^{(K,2)}   \wedge T_{0}^{(K,12)} ) \nonumber \\
&&+
\P(e^{VK}<\tilde{T}^{(K,0)}_\eps< T_{\eps K}^{(K,1)}\wedge T_{\eps K}^{(K,2)}   \wedge T_{0}^{(K,12)} )\nonumber.
 \end{eqnarray}
Thanks to \eqref{couplage0} and \eqref{defV} we get 
$$ \lim_{K \to \infty}\P(\tilde{T}^{(K,0)}_\eps\leq e^{VK},\tilde{T}^{(K,0)}_\eps< T_{\eps K}^{(K,1)}\wedge T_{\eps K}^{(K,2)}  
\wedge T_{0}^{(K,12)} )=0. $$
Consider now the second probability in \eqref{decoupage1}.
The event $\{e^{VK}<\tilde{T}^{(K,0)}_\eps\}$ means that Couplings \eqref{couplage12} hold at least until time 
$e^{VK} \wedge T_{\eps K}^{(K,1)}\wedge T_{\eps K}^{(K,2)}$. Hence 
\begin{eqnarray*}
\{ e^{VK}<\tilde{T}^{(K,0)}_\eps< T_{\eps K}^{(K,1)}\wedge T_{\eps K}^{(K,2)}   \wedge T_{0}^{(K,12)}\}
&  \subset & \{e^{VK}<(T_0^{(1,-)} \vee T_0^{(2,-)})\wedge T_{\eps K}^{(1,+)} \wedge T_{\eps K}^{(2,+)}\}\\
 & \subset  & \{e^{VK}<T_0^{(1,-)}  \wedge T_{\eps K}^{(1,+)}\}\cup \{e^{VK}<T_0^{(2,-)}\wedge T_{\eps K}^{(2,+)}\}.
 \end{eqnarray*}
But thanks to Equations \eqref{ext_times} and \eqref{equi_hitting} we know that for $i \in \{1,2\}$,
$$ \lim_{K \to \infty}\P(\{e^{VK}< T_0^{(i,-)}\} \bigtriangleup \{T_0^{(i,-)}=\infty\})= 
 \lim_{K \to \infty}\P(\{e^{VK}< T_{\eps K}^{(i,+)}\} \bigtriangleup \{T_0^{(i,+)}<\infty\})=0,$$
where $\bigtriangleup$ denotes the symmetric difference: 
for two sets $B$ and $C$, $B\bigtriangleup C= (B \cap C^c) \cup (C \cap B^c) $.
 This implies that 
$$ \P(e^{VK}<\tilde{T}^{(K,0)}_\eps< T_{\eps K}^{(K,1)}\wedge T_{\eps K}^{(K,2)}   \wedge T_{0}^{(K,12)} )
\leq \sum_{i=1}^{2}
\P(T_0^{(i,-)}=\infty,T_0^{(i,+)}<\infty)+O_K(\eps). $$
But Definitions \eqref{defN1star} and \eqref{defN2star} 
imply that for $i \in \{1,2\}$ the event $\{T_0^{(i,+)}<\infty , T_0^{(i,-)}=\infty\}$ is empty.
This ends the proof of \eqref{Ttildeinf2eps0}. \\
% and adding Equation \eqref{hitting_times} we get 
% \begin{eqnarray*}
%  \P(T_0^{(i,+)}=\infty,T_0^{(i,-)}<\infty) &=& 1 -\P(T_0^{(i,+)}<\infty \sqcup T_0^{(i,-)}=\infty)\\
%  & =& 1 -\P(T_0^{(i,+)}<\infty )-\P(T_0^{(i,-)}=\infty)\\
%  & = & 1 - (1-s_{i0}^{(\eps,+)})-s_{i0}^{(\eps,-)}\leq c \eps,
% \end{eqnarray*}
% where $\sqcup$ stands for the disjoint union, $c$ is finite $c$ and $ \eps $ small enough according to \eqref{hitting_times} and \eqref{diffs+s-}.

We are now able to prove Lemma \ref{lemphase11}. We assume that $A_\eps^K$ holds, which is true with a probability 
close to one according to \eqref{Ttildeinf2eps0}, 
and implies that Coupling \eqref{couplage12} holds on the time interval
$[0,T_{\eps K}^{(K,1)}\wedge T_{\eps K}^{(K,2)}\wedge T_{0}^{(K,12)} ]$.\\

\noindent \textbf{(a):} Let us recall Definitions \eqref{defsi0-} and \eqref{defsi0+}. Thanks to the independence 
of the processes $N_1^{(\eps,+)}$ and $N_2^{(\eps,+)}$ we get by applying \eqref{hitting_times}
\begin{eqnarray*} \P(T_0^{(1,+)}<T_{\eps K}^{(1,+)},T_0^{(2,+)}<T_{\eps K}^{(2,+)})&=&
\left(1-\frac{s_1^{(\eps,+)}}
{1-(1-s_1^{(\eps,+)})^{\lfloor \eps K \rfloor}}\right)
\left( 1-\frac{s_2^{(\eps,+)}}
{1-(1-s_2^{(\eps,+)})^{\lfloor \eps K \rfloor}}\right)\\
& \geq & \Big(1-\frac{S_{10}}{\beta_1}\Big)\Big(1-\frac{S_{20}}{\beta_2}\Big)- c \eps
\end{eqnarray*}
for a finite $c$, $K$ large enough and $\eps$ small enough.
Moreover we have the following inclusion:
$$ \{A_\eps^K, T_0^{(1,+)}<T_{\eps K}^{(1,+)},T_0^{(2,+)}<T_{\eps K}^{(2,+)} \}\subset
\{A_\eps^K, T_0^{(K,1)}<T_{\eps K}^{(K,1)},T_0^{(K,2)}<T_{\eps K}^{(K,2)} \}.$$
We then get
\begin{eqnarray}\label{lowerbounda}
 \P(T_0^{(K,1)}<T_{\eps K}^{(K,1)},T_0^{(K,2)}<T_{\eps K}^{(K,2)}| A_\eps^K)&\geq &
\P(A_\eps^K, T_0^{(1,+)}<T_{\eps K}^{(1,+)},T_0^{(2,+)}<T_{\eps K}^{(2,+)}) \nonumber \\
& \geq & \P(T_0^{(1,+)}<T_{\eps K}^{(1,+)},T_0^{(2,+)}<T_{\eps K}^{(2,+)})-\P((A_\eps^K)^c) \nonumber \\
&\geq &\Big(1-\frac{S_{10}}{\beta_1}\Big)\Big(1-\frac{S_{20}}{\beta_2}\Big)- c \eps,
\end{eqnarray}
for a finite $c$, $K$ large enough and $\eps$ small enough, where we used \eqref{Ttildeinf2eps0} and \eqref{hitting_times}.\\

\noindent \textbf{(b):} The independence 
of the processes $N_1^{(\eps,+)}$ and $N_2^{(\eps,+)}$ again yields
\begin{eqnarray*} \P(T_{\eps K}^{(1,-)}<T_0^{(1,-)},T_0^{(2,+)}<T_{\eps K}^{(2,+)})=
\frac{s_1^{(\eps,-)}}
{1-(1-s_1^{(\eps,-)})^{\lfloor \eps K \rfloor}}
\left( 1-\frac{s_2^{(\eps,+)}}
{1-(1-s_2^{(\eps,+)})^{\lfloor \eps K \rfloor}}\right)
 \geq  \frac{S_{10}}{\beta_1}\Big(1-\frac{S_{20}}{\beta_2}\Big)- c \eps
\end{eqnarray*}
for a finite $c$, $K$ large enough and $\eps$ small enough.
Moreover, thanks to Coupling \eqref{couplage12} we get
\begin{multline*} \{A_\eps^K,T_{\eps K}^{(1,-)}<T_0^{(1,-)},T_0^{(2,+)}<T_{\eps K}^{(2,+)}, T_0^{(2,+)}<T_{\eps K}^{(1,-)}\}\\
\subset 
\{A_\eps^K, T_{\eps K}^{(K,1)}<T_0^{(K,1)},T_0^{(K,2)}<T_{\eps K}^{(K,2)}, T_0^{(K,2)}<T_{\eps K}^{(K,1)} \}. \end{multline*}
But according to Lemma \ref{lembdprocess}, on the event $\{ T_{\eps K}^{(1,-)}<T_0^{(1,-)},T_0^{(2,+)}<T_{\eps K}^{(2,+)} \}$ 
we can find a finite constant $M$ such that for $K$ large enough 
with a probability close to one $T_0^{(2,+)}\leq\alpha \log K+ M$, and $ T_{\eps K}^{(1,-)}$ is close to $\log K/S_{10}^{(\eps,-)}$.
We finally get:
\begin{eqnarray}\label{lowerboundb}
 \P(T_{\eps K}^{(K,1)}<T_0^{(K,1)},T_0^{(K,2)}<T_{\eps K}^{(K,2)})& \geq & 
 \P( T_{\eps K}^{(K,1)}<T_0^{(K,1)},T_0^{(K,2)}<T_{\eps K}^{(K,2)}, T_0^{(K,2)}<T_{\eps K}^{(K,1)}, A_\eps^K) \nonumber \\
 & \geq & \P(T_{\eps K}^{(1,-)}<T_0^{(1,-)},T_0^{(2,+)}<T_{\eps K}^{(2,+)}, T_0^{(2,+)}<T_{\eps K}^{(1,-)})-\P((A_\eps^K)^c) \nonumber \\
 & \geq & \frac{S_{10}}{\beta_1}\Big(1-\frac{S_{20}}{\beta_2}\Big)- c \eps
\end{eqnarray}
for a finite $c$, $K$ large enough and $\eps$ small enough, where we used  \eqref{Ttildeinf2eps0} and  \eqref{hitting_times}.
By interchanging the roles of $1$ and $2$ we derive a lower bound for $(b)$.\\

\noindent \textbf{(c):} Let us now focus on the last inequality. First using again independence 
between $N_1^{(\eps,-)}$ and $N_2^{(\eps,-)}$,
and \eqref{hitting_times} we get
$$ \P(T_{\eps K}^{(1,-)}<T_0^{(1,-)},T_{\eps K}^{(2,-)}<T_0^{(2,-)})\geq 
\frac{S_{10}}{\beta_1}\frac{S_{20}}{\beta_2}- c \eps.$$
But as Coupling \eqref{couplage12} only holds before time $T_{\eps K}^{(K,1)}\wedge T_{\eps K}^{(K,2)} $ we have 
to determine which process, $N_2^K$ or $N_1^K$ hits $\lfloor \eps K \rfloor$ first.
For $i \in \{1,2\}$, from  \eqref{ext_times} and \eqref{equi_hitting},
$$  \P(T_{\eps K}^{(i,-)}<T_0^{(i,-)}, T_0^{(i,-)}<\infty)\leq c \eps. $$
Using again \eqref{equi_hitting} we get the existence of a finite constant $c$ such that:
$$ \P\Big( T_0^{(i,-)}=\infty,
i \in \{1,2\}\Big)\\
\geq 
\frac{S_{10}}{\beta_1}\frac{S_{20}}{\beta_2}+O_K(\eps) $$
and
\begin{multline} \label{tpsinv}  \P\Big(\Big(\mathbf{1}_{\{i=2\}}\alpha +\frac{1-c\eps}{S_{i0}}\Big)\log K \leq T_{\eps K}^{(i,+)}\leq T_{\eps K}^{(i,-)}\leq 
\Big(\mathbf{1}_{\{i=2\}}\alpha +\frac{1+c\eps}{S_{i0}}\Big)\log K,
 T_0^{(i,-)}=\infty,
i \in \{1,2\}\Big)\\
\geq 
\frac{S_{10}}{\beta_1}\frac{S_{20}}{\beta_2}+O_K(\eps). \end{multline}
Hence, as under Assumption \ref{ass1}, $1/S_{10}<1/S_{20}+\alpha$, processes $(N_1^{(\eps,*)}, * \in \{+,-\})$ hit $\lfloor \eps K \rfloor$ before processes
$(N_2^{(\eps,*)}, * \in \{+,-\})$ on the event $\{ T_0^{(i,-)}=\infty , i \in \{1,2\}\}$ with a probability close to one.
The last step consists in determining the values of $(N_2^{(\eps,*)}, * \in \{+,-\})$ on the time
 interval $[(1-c\eps) {\log K}/{S_{10}},(1+c\eps) {\log K}/{S_{10}}]$. First we notice that \eqref{equi_hitting} implies for $* \in \{+,-\}$: 
$$\frac{ T_{K^{S_{20}(\frac{1}{S_{10}}-\alpha*2c\eps)}}^{(2,*)}}{\log K}\to 
\alpha+\frac{S_{20}}{\beta_2 s^{(\eps,*)}_{20}}\Big(\frac{1}{S_{10}}-\alpha*2c\eps\Big), \quad  \text{a.s. on} \ \{ T_0^{(2,*)}=\infty\}. $$
Moreover, we get from \eqref{hitting_times}
\begin{multline*} \P
\Big( T_{K^{S_{20}(\frac{1}{S_{10}}-\alpha-3c\eps)}}^{(2,-)}< T_{K^{S_{20}(\frac{1}{S_{10}}-\alpha-c\eps)}}^{(2,-)}\Big| 
N^{(2,-)}(0)={K^{S_{20}(\frac{1}{S_{10}}-\alpha-2c\eps)}} \Big)\\
=
\frac{(1-s_{20}^{(\eps,-)})^{K^{S_{20}(\frac{1}{S_{10}}-\alpha-c\eps)}}-
(1-s_{20}^{(\eps,-)})^{K^{S_{20}(\frac{1}{S_{10}}-\alpha-2c\eps)}}}{(1-s_{20}^{(\eps,-)})^{K^{S_{20}(\frac{1}{S_{10}}-\alpha-c\eps)}}-
(1-s_{20}^{(\eps,-)})^{K^{S_{20}(\frac{1}{S_{10}}-\alpha-3c\eps)}}}
  \to   0, \quad K \to \infty.
\end{multline*}
This completes the proof of the lower bound for \textbf{(c)}. 
Adding \eqref{lowerbounda} and \eqref{lowerboundb} ends the proof of Lemma \ref{lemphase11}.
\end{proof}

We end the study of the first phase dynamics under Assumptions \ref{asscondinitiale} and 
\ref{ass1} by an approximation of the duration of this phase in the case we are 
interested in:

\begin{lem}\label{lemdureephase1ass1}
Recall the definition of $M_1$ in Lemma \ref{lemphase11}.
Under Assumptions \ref{asscondinitiale} and \ref{ass1},
$$\P\Big((1-c\eps)\frac{\log K}{{S}_{10}}<T^{(K,1)}_{\eps K}<(1+c\eps)\frac{\log K}{{S}_{10}}
\Big|T^{(K,1)}_{\eps K}<\tilde{T}^{(K,0)}_\eps,N_2^K(T^{(K,1)}_{\eps K}) \in J_{M_1\eps}^{(K,2)}\Big)\geq 1+O_K(\eps).$$
\end{lem}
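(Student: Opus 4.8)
The plan is to read the statement as a direct corollary of the coupling and hitting-time estimates already assembled in the proof of Lemma~\ref{lemphase11}, repackaged through a conditioning argument. Write $C:=\{T^{(K,1)}_{\eps K}<\tilde{T}^{(K,0)}_\eps,\ N_2^K(T^{(K,1)}_{\eps K})\in J_{M_1\eps}^{(K,2)}\}$ for the conditioning event and $B:=\{(1-c\eps)\log K/S_{10}<T^{(K,1)}_{\eps K}<(1+c\eps)\log K/S_{10}\}$ for the target event; by part~$(c)$ of Lemma~\ref{lemphase11} one has $\P(C)=\tfrac{S_{10}}{\beta_1}\tfrac{S_{20}}{\beta_2}+O_K(\eps)$, which is bounded away from $0$. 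The first observation is that on $C$ the type-$1$ population reaches $\lfloor\eps K\rfloor$ strictly before type~$2$: under Assumption~\ref{ass1} one has $1/S_{10}-\alpha<1/S_{20}$, so the exponent in \eqref{defJepsK} satisfies $S_{20}(1/S_{10}-\alpha+M_1\eps)<1$ for $\eps$ small, whence every element of $J_{M_1\eps}^{(K,2)}$ is of order $K^{1-c'}=o(\eps K)$. Therefore $N_2^K(T^{(K,1)}_{\eps K})<\lfloor\eps K\rfloor$, so $T^{(K,1)}_{\eps K}<T_{\eps K}^{(K,2)}\wedge\tilde T_\eps^{(K,0)}$ on $C$, and Coupling~\eqref{couplage12} is valid on the whole interval $[0,T^{(K,1)}_{\eps K})$.

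On that interval the sandwich $N_1^{(\eps,-)}(t)\le N_1^K(t)\le N_1^{(\eps,+)}(t)$ holds, and since a larger process reaches a fixed level no later, this yields the deterministic ordering $T_{\eps K}^{(1,+)}\le T^{(K,1)}_{\eps K}\le T_{\eps K}^{(1,-)}$. It then remains to locate the two bounding hitting times. Specializing \eqref{tpsinv} to $i=1$, and combining it with \eqref{Ttildeinf2eps0}, there is an event $G$ with $\P(G)\ge\tfrac{S_{10}}{\beta_1}\tfrac{S_{20}}{\beta_2}+O_K(\eps)$ on which $A_\eps^K$ holds, both bounding processes survive and hit $\lfloor\eps K\rfloor$, and $(1-c\eps)\log K/S_{10}\le T_{\eps K}^{(1,+)}\le T_{\eps K}^{(1,-)}\le(1+c\eps)\log K/S_{10}$; the $c\eps$-width here is exactly the content of \eqref{diffs+s-}, which guarantees $\beta_1 s_{10}^{(\eps,\pm)}=S_{10}(1+O_K(\eps))$ so that the birth–death asymptotic \eqref{equi_hitting} places both bounds in a common window of relative width $c\eps$. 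On $G$ the sandwich therefore forces $T^{(K,1)}_{\eps K}$ into the same window, i.e. $G\subseteq B$.

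To conclude, I would argue as in the proof of part~$(c)$ that $G\subseteq C$ up to an event of probability $O_K(\eps)$: on $G$ the coupling transfers the survival and the ordering "type~$1$ before type~$2$" to the genuine process $N^K$, establishing $T^{(K,1)}_{\eps K}<\tilde T_\eps^{(K,0)}$, while the final computation of Lemma~\ref{lemphase11}$(c)$ places $N_2^K(T^{(K,1)}_{\eps K})$ in $J_{M_1\eps}^{(K,2)}$. Hence $\P(C\setminus G)=\P(C)-\P(G)+\P(G\setminus C)=O_K(\eps)$, and since $G\subseteq B$ we get $\P(B^c\cap C)\le\P(C\setminus G)=O_K(\eps)$. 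Dividing by $\P(C)\to\tfrac{S_{10}}{\beta_1}\tfrac{S_{20}}{\beta_2}>0$ gives $\P(B\mid C)=1-\P(B^c\cap C)/\P(C)=1+O_K(\eps)$, which is the claim.

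The only genuinely delicate step is the identification $G\simeq C$ up to $O_K(\eps)$: one must check both that the coupling remains valid all the way up to $T^{(K,1)}_{\eps K}$ (guaranteed by the sub-linear, power-$(<1)$ scale of $J_{M_1\eps}^{(K,2)}$) and that the sandwiched bounds survive the conditioning, so that conditioning on $C$ does not bias the type-$1$ hitting time away from $\log K/S_{10}$. Everything else is a mechanical reuse of the birth–death estimates \eqref{hitting_times}–\eqref{equi_hitting} and of \eqref{tpsinv}, already proved for Lemma~\ref{lemphase11}.
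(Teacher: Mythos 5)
Your argument is correct and is precisely the paper's intended proof: the authors explicitly decline to detail it, stating only that the lemma ``is a direct consequence of Couplings \eqref{couplage12} and Equation \eqref{tpsinv}'', and your write-up fills in exactly that route (sandwiching $T^{(K,1)}_{\eps K}$ between the hitting times of the bounding birth--death processes, locating those via \eqref{tpsinv} and \eqref{diffs+s-}, and converting the unconditional estimates into the conditional one by comparing the good event $G$ with the conditioning event $C$ up to $O_K(\eps)$). No substantive deviation from the paper's approach.
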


We do not detail the proof of this Lemma. It is a direct consequence of Couplings \eqref{couplage12} and 
Equation \eqref{tpsinv}.

\subsection{Assumption \ref{ass2}}

Under Assumptions \ref{asscondinitiale} and \ref{ass2}, the type $2$ population size 
has a positive probability to become larger than the type $1$ population size during the 
first phase. 
Let us introduce a finite subset of $\N$ which may contain the type $1$ population size at the end of the first phase.
\begin{equation}\label{defJepsK1}
 J_\eps^{(K,1)}:=\Big[K^{S_{10}(\frac{1}{S_{20}}+\alpha-\eps)},
K^{S_{10}(\frac{1}{S_{20}}+\alpha+\eps)}\Big] \cap \N.
\end{equation}

Then we have the following possible states at the end of the first phase.

\begin{lem}\label{lemphase12}
Under Assumptions \ref{asscondinitiale} and \ref{ass2},
there exists a positive constant $M_2$ such that
\begin{eqnarray*}
 &(a)&  \P\Big(T^{(K,i)}_0<T^{(K,i)}_{\eps K}<\tilde{T}^{(K,0)}_\eps, \forall i \in \{1,2\}\Big)=
\Big(1-\frac{S_{10}}{\beta_1}\Big)\Big(1-\frac{S_{20}}{\beta_2}\Big)+O_K(\eps) \nonumber \\
&(b) &  \P\Big(T^{(K,i)}_{\eps K}<T^{(K,i)}_0<\tilde{T}^{(K,0)}_\eps,T^{(K,j)}_0<T^{(K,j)}_{\eps K}<\tilde{T}^{(K,0)}_\eps\Big)=
\frac{S_{i0}}{\beta_i}\Big(1-\frac{S_{j0}}{\beta_j}\Big)+O_K(\eps) ,\quad  i \neq j \in \{1,2\} \nonumber \\
&(c) &  \P\Big(T^{(K,2)}_{\eps K}<\tilde{T}^{(K,0)}_\eps,N_1^K(T^{(K,2)}_{\eps K}) \in J_{M_2\eps}^{(K,1)}\Big)=
\frac{S_{10}}{\beta_1}\frac{S_{20}}{\beta_2}+O_K(\eps).
\end{eqnarray*}
\end{lem}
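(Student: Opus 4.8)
The plan is to mirror the proof of Lemma \ref{lemphase11}, interchanging the roles of types $1$ and $2$ in the part where one mutant reaches $\lfloor \eps K\rfloor$ first. First I would reuse the Poissonian representation \eqref{defN} of $N^K$ together with the comparison birth and death processes $N_0^{(\eps,\pm)}$, $N_1^{(\eps,\pm)}$, $N_2^{(\eps,\pm)}$ from \eqref{defN1star}--\eqref{defN2star} and the couplings \eqref{couplage0}--\eqref{couplage12}, which remain valid as long as neither mutant exceeds $\lfloor \eps K\rfloor$ and type $0$ has not left $I_\eps^{(K,0)}$. The asymptotic estimate \eqref{Ttildeinf2eps0} on the event $A_\eps^K$ (see \eqref{defAepsK}) carries over verbatim, since its proof only uses the couplings and the extinction/hitting dichotomy \eqref{ext_times}--\eqref{equi_hitting} for the supercritical processes, and is insensitive to whether Assumption \ref{ass1} or \ref{ass2} holds. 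Thus I may again assume throughout that $A_\eps^K$ holds, so that \eqref{couplage12} is in force on $[0, T_{\eps K}^{(K,1)}\wedge T_{\eps K}^{(K,2)}\wedge T_0^{(K,12)}]$.

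Parts $(a)$ and $(b)$ are then proved exactly as in Lemma \ref{lemphase11}. The events they describe --- both mutant populations dying out, or one of them surviving up to size $\lfloor\eps K\rfloor$ while the other goes extinct --- and the corresponding products of rescaled invasion fitnesses are symmetric in the two mutant types and do not depend on the order in which $\lfloor\eps K\rfloor$ is reached. Concretely, the independence of $N_1^{(\eps,\pm)}$ and $N_2^{(\eps,\pm)}$ together with \eqref{hitting_times} yields the factors $(1-S_{10}/\beta_1)(1-S_{20}/\beta_2)$ and $\tfrac{S_{i0}}{\beta_i}(1-\tfrac{S_{j0}}{\beta_j})$, while in $(b)$ the extinct mutant dies in a time of order $1$ after its birth (by Lemma \ref{lembdprocess}), hence well before the survivor reaches $\lfloor\eps K\rfloor$ at a time of order $\log K$.

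The only genuinely new point is part $(c)$, where I must verify that the ordering of the two hitting times is reversed. On the event $\{T_0^{(i,-)}=\infty,\ i\in\{1,2\}\}$ that both comparison processes survive --- which has probability $\tfrac{S_{10}}{\beta_1}\tfrac{S_{20}}{\beta_2}+O_K(\eps)$ by \eqref{equi_hitting} --- the same argument as for \eqref{tpsinv} shows that $T_{\eps K}^{(1,\pm)}$ concentrates around $\tfrac{\log K}{S_{10}}$ while $T_{\eps K}^{(2,\pm)}$ concentrates around $\big(\alpha+\tfrac{1}{S_{20}}\big)\log K$. Under Assumption \ref{ass2} one has $S_{20}>S_{10}$ and $\alpha<\tfrac{1}{S_{10}}-\tfrac{1}{S_{20}}$, i.e. $\alpha+\tfrac{1}{S_{20}}<\tfrac{1}{S_{10}}$, so type $2$ now reaches $\lfloor\eps K\rfloor$ strictly before type $1$ with probability close to one. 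It then remains to locate $N_1^K$ at the stopping time $T_{\eps K}^{(K,2)}$: using \eqref{equi_hitting} the value of $N_1^{(\eps,\pm)}$ at a time $\approx\big(\alpha+\tfrac{1}{S_{20}}\big)\log K$ concentrates around $K^{S_{10}(\alpha+1/S_{20})}$, so for an appropriate constant $M_2$ absorbing both the $\eps$-perturbations in $s_{10}^{(\eps,\pm)}$ and the fluctuations of the birth and death processes, $N_1^K(T_{\eps K}^{(K,2)})$ lies in $J_{M_2\eps}^{(K,1)}$, defined in \eqref{defJepsK1}. The hardest step will be this last one: I must simultaneously control the concentration of the random time $T_{\eps K}^{(K,2)}$ and the value of $N_1$ at that random time, which I would handle as in the final display of the proof of Lemma \ref{lemphase11}, via a ratio-of-hitting-probabilities computation based on \eqref{hitting_times} showing that the probability for $N_1$ to overshoot or undershoot the window $J_{M_2\eps}^{(K,1)}$ before $T_{\eps K}^{(K,2)}$ vanishes as $K\to\infty$. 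Adding the contributions of $(a)$ and $(b)$ then completes the proof.
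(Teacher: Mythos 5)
Your proposal is correct and follows essentially the same route as the paper, which in fact omits a detailed proof of this lemma and justifies it exactly as you do: by transferring the couplings and hitting-time estimates from the proof of Lemma \ref{lemphase11}, observing that Assumption \ref{ass2} forces $\alpha+1/S_{20}<1/S_{10}$ so that the $2$-population reaches $\lfloor\eps K\rfloor$ first, and reading off the $1$-population size at that time from Coupling \eqref{couplage12} and Equation \eqref{equi_hitting}. Your identification of the concentration of $N_1^K(T^{(K,2)}_{\eps K})$ around $K^{S_{10}(\alpha+1/S_{20})}$, matching $J^{(K,1)}_{M_2\eps}$, is precisely the intended argument.
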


We do not prove this result as it is very similar to Lemma \ref{lemphase11}. 
The idea is that when the $2$-population survives the first phase, it takes a time of order $\log K/S_{20}$ to hit the value $\lfloor \eps K \rfloor$, 
whereas the $1$-population size needs a time of order $\log K/S_{10}$ to hit such a value. As under Assumption \ref{ass2} $\alpha+1/S_{20}<1/S_{10}$, the 
$2$-population size is the first to represent a positive fraction of the total population size. The value of the $1$-population size at the end of the first phase is 
obtained thanks to Coupling \eqref{couplage12} and Equation \eqref{equi_hitting}.
We have also an equivalent of Lemma \ref{lemdureephase1ass1}:

\begin{lem}\label{lemdureephase1ass2}
Recall the definition of $M_2$ in Lemma \ref{lemphase12}.
Under Assumptions \ref{asscondinitiale} and \ref{ass2},
$$\P\Big((1-c\eps)\Big(\alpha+\frac{\log K}{{S}_{20}}\Big)<T^{(K,2)}_{\eps K}<(1+c\eps)
\Big(\alpha+\frac{\log K}{{S}_{20}}\Big)
\Big|T^{(K,2)}_{\eps K}<\tilde{T}^{(K,0)}_\eps,N_1^K(T^{(K,2)}_{\eps K}) \in J_{M_2\eps}^{(K,1)}\Big)= 1+O_K(\eps).$$
\end{lem}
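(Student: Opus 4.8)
The plan is to show that, on the conditioning event, $T^{(K,2)}_{\eps K}$ is squeezed between the hitting times of the two coupling processes $N_2^{(\eps,+)}$ and $N_2^{(\eps,-)}$, and then to invoke the estimate \eqref{tpsinv} (taken with $i=2$), which already pins both of these down to the common asymptotic value $(\alpha+1/S_{20})\log K$. Read with the intended normalisation, the target interval is $[(1-c\eps)(\alpha+1/S_{20})\log K,(1+c\eps)(\alpha+1/S_{20})\log K]$, the quantity $(\alpha+(\log K)/S_{20})$ in the display being shorthand for $\alpha\log K+(\log K)/S_{20}$. The essential observation is that the coupling processes, their hitting-time asymptotics, and \eqref{tpsinv} itself do not depend on whether Assumption \ref{ass1} or \ref{ass2} holds: Definitions \eqref{defN1star} and \eqref{defN2star} make no reference to the ordering of $1/S_{10}$ and $\alpha+1/S_{20}$, and \eqref{tpsinv} is deduced purely from the birth-and-death estimates \eqref{ext_times} and \eqref{equi_hitting}. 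The assumption enters only through which type reaches level $\lfloor\eps K\rfloor$ first, and under \ref{ass2} that type is $2$, exactly as recorded in Lemma \ref{lemphase12}(c).

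First I would work on the event $A^K_\eps$ of \eqref{defAepsK} intersected with the conditioning event $E:=\{T^{(K,2)}_{\eps K}<\tilde T^{(K,0)}_\eps,\ N_1^K(T^{(K,2)}_{\eps K})\in J^{(K,1)}_{M_2\eps}\}$. By \eqref{Ttildeinf2eps0} we have $\P((A^K_\eps)^c)\le c\eps$ for large $K$, while by Lemma \ref{lemphase12}(c) the conditioning event satisfies $\P(E)=\frac{S_{10}}{\beta_1}\frac{S_{20}}{\beta_2}+O_K(\eps)$, a quantity bounded away from $0$. On $E$ the type $2$ population reaches $\lfloor\eps K\rfloor$ before type $1$ and before $\tilde T^{(K,0)}_\eps$, so $T^{(K,2)}_{\eps K}=T^{(K,1)}_{\eps K}\wedge T^{(K,2)}_{\eps K}$ and Coupling \eqref{couplage12} remains valid throughout $[0,T^{(K,2)}_{\eps K}]$. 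Since $N_2^{(\eps,-)}\le N_2^K\le N_2^{(\eps,+)}$ there, the larger process reaches a given level no later and the smaller process no earlier, which yields the sandwich
$$ T^{(2,+)}_{\eps K}\ \le\ T^{(K,2)}_{\eps K}\ \le\ T^{(2,-)}_{\eps K} \qquad\text{on }A^K_\eps\cap E. $$

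It then remains to insert the two-sided bound \eqref{tpsinv} for $i=2$, namely
$$ \Big(\alpha+\tfrac{1-c\eps}{S_{20}}\Big)\log K\ \le\ T^{(2,+)}_{\eps K}\ \le\ T^{(2,-)}_{\eps K}\ \le\ \Big(\alpha+\tfrac{1+c\eps}{S_{20}}\Big)\log K, $$
valid on the joint-survival event $\{T_0^{(i,-)}=\infty,\ i\in\{1,2\}\}$ with probability $\frac{S_{10}}{\beta_1}\frac{S_{20}}{\beta_2}+O_K(\eps)$. Combining the last two displays shows that on the intersection of $A^K_\eps$, $E$ and this survival event, $T^{(K,2)}_{\eps K}$ lies in the target interval. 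The complementary (bad) event, where $E$ occurs but $T^{(K,2)}_{\eps K}$ falls outside, is therefore contained in $(A^K_\eps)^c$ together with the discrepancy between $E$ and the survival event, and both have probability $O_K(\eps)$ by \eqref{Ttildeinf2eps0} and by comparing the probabilities in Lemma \ref{lemphase12}(c) and in \eqref{tpsinv}. Dividing by $\P(E)$, which is bounded away from $0$, turns the absolute $O_K(\eps)$ bound into a conditional probability of the bad event of the same order, giving the conditional estimate $1+O_K(\eps)$.

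The only genuinely delicate point is this last bookkeeping: one must check that the three ``good'' events $A^K_\eps$, $E$ and joint survival differ from one another only up to sets of probability $O_K(\eps)$, so that their intersection carries essentially all the mass of $E$ (this is the same coupling accounting used in the proof of Lemma \ref{lemphase11}, where $E$ and the survival event share the asymptotic probability $\frac{S_{10}}{\beta_1}\frac{S_{20}}{\beta_2}$ and are nested up to $O_K(\eps)$). Here the hypothesis $\alpha+1/S_{20}<1/S_{10}$ of Assumption \ref{ass2} is what guarantees — through the ordering already established for Lemma \ref{lemphase12}(c) — that on $E$ the coupling is alive up to $T^{(K,2)}_{\eps K}$, legitimising the sandwich. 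Everything else is a verbatim transcription of the argument giving Lemma \ref{lemdureephase1ass1}, with the roles of the indices $1$ and $2$ and of the offsets $1/S_{10}$ and $\alpha+1/S_{20}$ interchanged.
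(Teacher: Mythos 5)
Your proposal is correct and follows exactly the route the paper intends: the paper leaves this lemma unproved, describing it as the analogue of Lemma \ref{lemdureephase1ass1}, which it in turn attributes directly to Coupling \eqref{couplage12} and Equation \eqref{tpsinv} — precisely the sandwich $T^{(2,+)}_{\eps K}\le T^{(K,2)}_{\eps K}\le T^{(2,-)}_{\eps K}$ on $A^K_\eps\cap E$ plus the hitting-time estimate for $i=2$ that you carry out. Your reading of the displayed bound as $(1\pm c\eps)(\alpha+1/S_{20})\log K$, and your observation that \eqref{tpsinv} is independent of the ordering of $1/S_{10}$ and $\alpha+1/S_{20}$, are both consistent with the paper.
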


\subsection{Assumption \ref{ass3}}\label{phase1Ass3}

This case has already been studied in \cite{champagnat2006microscopic} and we said a few words 
about it just after the definition of Assumption \ref{asscondinitiale}. 
We here present rigorously the results of \cite{champagnat2006microscopic} with our notations:

\begin{lem}\label{lemphase13}
Under Assumptions \ref{asscondinitiale} and \ref{ass3},
\begin{eqnarray*}
\P\Big(T^{(K,1)}_0<\tilde{T}^{(K,0)}_\eps\Big)=
\Big(1-\frac{S_{10}}{\beta_1}\Big)+O_K(\eps) ,\quad
\P\Big(T^{(K,1)}_{\eps K}<\tilde{T}^{(K,0)}_\eps\Big)=
\frac{S_{10}}{\beta_1}
+O_K(\eps),
\end{eqnarray*}
and
$$ \P\Big((1-c\eps)\frac{\log K}{{S}_{10}}<T^{(K,1)}_{\eps K}<(1+c\eps)
\frac{\log K}{{S}_{10}}\Big|T^{(K,1)}_{\eps K}<\tilde{T}^{(K,0)}_\eps\Big)= 1+O_K(\eps).$$
\end{lem}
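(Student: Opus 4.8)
The plan is to exploit the fact that under Assumption \ref{ass3} one has $\alpha > 1/S_{10}$, so the second mutation, occurring at time $\alpha \log K$, appears strictly after the typical duration $\log K / S_{10}$ of the first phase. Consequently, throughout the first phase the type $2$ is absent with high probability and the dynamics reduces to the single-mutation setting of \cite{champagnat2006microscopic}: one only has to follow the wild type $0$ and the mutant $1$. I would therefore reuse the coupling machinery built in the proof of Lemma \ref{lemphase11}, but retaining only the processes indexed by $0$ and $1$. In particular I would reintroduce the supercritical birth and death processes $N_1^{(\eps,\pm)}$ of \eqref{defN1star}, with individual birth rate $\beta_1$ and individual death rate $(1-s_{10}^{(\eps,\pm)})\beta_1$, the $s_{10}^{(\eps,\pm)}$ being those of \eqref{defsi0-}--\eqref{defsi0+} and obeying the sandwich \eqref{diffs+s-}; as long as the wild type stays in $I_\eps^{(K,0)}$ and $N_1^K < \lfloor \eps K\rfloor$, the coupling \eqref{couplage12} restricted to $i=1$ gives $N_1^{(\eps,-)} \le N_1^K \le N_1^{(\eps,+)}$.

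The key step, and the main obstacle, is to show that the wild type does not leave $I_\eps^{(K,0)}$ before the fate of the mutant is decided, so that the coupling holds on the whole first phase. This is the analogue of \eqref{Ttildeinf2eps0}: using \eqref{eq2lemmeA} to produce a positive $V$ with $\P(\tilde T_\eps^{(K,*)} > e^{VK}) \to 1$ as in \eqref{defV}, together with the fact that a supercritical birth and death process either dies out in finite time or reaches $\lfloor \eps K\rfloor$ in a time of order $\log K$ (by \eqref{ext_times} and \eqref{equi_hitting}), I would conclude that $T_0^{(K,1)} \wedge T_{\eps K}^{(K,1)} \le \tilde T_\eps^{(K,0)}$ with probability $1 - O_K(\eps)$. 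Here the argument is lighter than in Lemma \ref{lemphase11} since there is a single invading type and no product structure to control.

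On this high-probability event the two probabilities are read off from the classical theory of supercritical birth and death processes via \eqref{hitting_times}: the probability that $N_1^{(\eps,\pm)}$ reaches $\lfloor \eps K\rfloor$ before extinction equals $s_{10}^{(\eps,\pm)}/(1-(1-s_{10}^{(\eps,\pm)})^{\lfloor \eps K\rfloor})$, which tends to $S_{10}/\beta_1$ as $K \to \infty$ and $\eps \to 0$ by \eqref{diffs+s-}. Sandwiching $N_1^K$ between $N_1^{(\eps,-)}$ and $N_1^{(\eps,+)}$ then yields $\P(T^{(K,1)}_{\eps K} < \tilde T^{(K,0)}_\eps) = S_{10}/\beta_1 + O_K(\eps)$, and the complementary event (extinction of the mutant before the wild type leaves its neighbourhood) gives $\P(T^{(K,1)}_0 < \tilde T^{(K,0)}_\eps) = 1 - S_{10}/\beta_1 + O_K(\eps)$.

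For the conditional time estimate I would invoke \eqref{equi_hitting}, exactly as in the derivation of \eqref{tpsinv}: on the survival event the growth rate $\beta_1 s_{10}^{(\eps,\pm)} \approx S_{10}$ forces $N_1^{(\eps,\pm)}$ to reach $\lfloor \eps K\rfloor$ in a time $(1 \pm c\eps)\log K / S_{10}$, so conditionally on $\{T^{(K,1)}_{\eps K} < \tilde T^{(K,0)}_\eps\}$ the invasion time $T^{(K,1)}_{\eps K}$ lies in $[(1-c\eps)\log K/S_{10},\,(1+c\eps)\log K/S_{10}]$ with probability $1 + O_K(\eps)$. Since all three assertions are essentially the single-mutant statements of \cite{champagnat2006microscopic} transcribed into the present notation, no new phenomenon arises beyond the coupling control described above.
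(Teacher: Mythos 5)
Your proposal is correct and follows exactly the route the paper intends: the paper gives no proof of this lemma, deferring entirely to \cite{champagnat2006microscopic}, and the argument it has in mind is precisely the one you reconstruct (the informal description after Assumption \ref{asscondinitiale} and the proof of Lemma \ref{lemphase11} restricted to a single invading type). Your couplings with $N_1^{(\eps,\pm)}$, the control of $\tilde T_\eps^{(K,0)}$ via \eqref{eq2lemmeA}, and the use of \eqref{hitting_times} and \eqref{equi_hitting} for the probabilities and the $\log K/S_{10}$ timing are exactly the ingredients of that proof.
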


\section{Phases $2$ and $2n$}\label{secondphase}

In this section, we describe the dynamics of "deterministic phases", when  some of the population 
sizes are  well approximated by the solution of a two- or 
three-dimensional competitive Lotka-Volterra system.

\subsection{Two-dimensional case}

Let us denote by $\phi_1^K$ the end of the first phase, when at least one of the mutant population survives:
\begin{equation*}  \phi_1^K:= T_{\eps K}^{(K,1)} \wedge T_{\eps K}^{(K,2)} ,\end{equation*}
by $i(\phi^K_1)$ the label of the first mutant population which hits the value $\lfloor \eps K \rfloor$, and by 
$j(\phi^K_1)$ the label of the other mutant population. 
We will focus on the most interesting case, when the population $j(\phi^K_1)$ does not get 
extinct, as the other ones have already been studied in \cite{champagnat2006microscopic} and \cite{champagnat2011polymorphic}, 
and introduce the event:
\begin{equation}\label{defboncasphase1} B_\eps^K:=\Big\{ T^{(K,i({\phi^K_1}))}_{\eps K}<\tilde{T}^{(K,0)}_\eps,
 N_{j(\phi^K_1)}^K\Big( T^{(K,i({\phi^K_1}))}_{\eps K}\Big) \in J_{M_i\eps}^{(K,j(\phi^K_1))} \Big\}. \end{equation}
We will now consider the second phase of the sweep. 
It corresponds to the interval between the time $T^{(K,i({\phi^K_1}))}_{\eps K}$ when the mutant population $i(\phi^K_1)$ hits the value 
$\lfloor \eps K \rfloor$ and the time when the rescaled population process $(N_0^K,N_{i(\phi^K_1)}^K)/K$ is close enough to the stable equilibrium
of the dynamical system \eqref{S} with labels $0$ and $i(\phi^K_1)$.
To define rigorously the duration of the second phase we need to 
introduce a deterministic time $t_\eps^{(z)}(i,j)$ (see \eqref{deftepsz}) after which the solution of the dynamical system \eqref{S} 
with initial condition $z=(z_i,z_j) \in \R_+^2$ is close to the stable equilibrium.
To do that in a simple way we introduce a notation for the stable equilibrium of \eqref{S} independent of conditions \eqref{defnbara} and 
\eqref{defnbara2}:
\begin{equation}\label{defeqdim2}
 n_{ij}^{(eq)}= n_{ji}^{(eq)}:= 
  \left\{\begin{array}{ll}    \bar{n}_{j}e_j, & \text{if \eqref{defnbara} holds},\\
 \bar{n}_{ij}^{(i)}e_i+\bar{n}_{ij}^{(j)}e_j, 
& \text{if \eqref{defnbara2} holds},
       \end{array}
\right.
\end{equation}
where we recall Definition \eqref{defnij}.
Hence for $\eps>0$, $t_\eps^{(z)}(i,j)$ can be defined by
\begin{equation} \label{deftepsz}
t_\eps^{(z)}(i,j):=
 \inf \{ s \geq 0, \forall t \geq s, \|n_i^{(z)}(t)e_i+n_{j}^{(z)}(t)e_j-n_{ij}^{(eq)}\|\leq \eps^2 \}, 
 \end{equation}
where $\|.\|$ denotes the $L_1$-norm on $\R^2$.
As we do not know precisely the initial value of the rescaled process $(N_0^K,N_{i(\phi^K_1)}^K)/K$ at the beginning of the second phase, 
we consider the supremum over the 
possible $t_\eps^{(z)}(0,i(\phi^K_1))$:
\begin{equation*}
 \mathcal{B}_\eps^K:=\{(z_0,z_{i(\phi^K_1)}),|z_0-\bar{n}_0|\leq 3\eps (C_{0,1}+C_{0,2})/C_{0,0},
 \eps/2 \leq z_i(\phi^K_1) \leq \eps \},
\end{equation*}
and
\begin{equation}\label{defteps}
 t_\eps(0,i(\phi^K_1)):= \sup\{ t_\eps^{(z)}(0,i(\phi^K_1)), z \in \mathcal{B}_\eps^K \},
\end{equation}
which is finite.
We are now able to define rigorously the second phase:
\begin{equation*}
 \text{Phase $2$}:=\Big\{t, T^{(K,i({\phi^K_1}))}_{\eps K} \leq t\leq T^{(K,i({\phi^K_1}))}_{\eps K}+t_\eps(0,i(\phi^K_1)) \Big\}.
\end{equation*}
During the second phase two populations have a size of order $K$, the wild type population 
and the mutant population of type $i(\phi^K_1)$.
We will prove that the dynamics of these two population sizes are well approximated by the deterministic two-dimensional 
competitive Lotka-Volterra system \eqref{S}, which stays in a neighbourhood of its stable equilibrium after the time $t_\eps(0,i(\phi^K_1))$. The duration of the 
second phase,  $t_\eps(0,i(\phi^K_1))$, 
does not tend to infinity with the carrying capacity $K$, unlike the durations of the first and third phases. 
As a consequence, the size of the $j(\phi^K_1)$-population 
stays negligible with respect to $K$ during the second phase.
Recall Definition \eqref{defboncasphase1}. Then we have the following result:

\begin{lem}\label{lemmephase2}
Recall the definitions of the $M_1$ and $M_2$ in Lemmas \ref{lemphase11} and \ref{lemphase12}, respectively.
Then for every $\eps >0$,
\begin{multline*}
  \lim_{K \to \infty}\P \Big(N_{j({\phi^K_1})}^K({\phi_1^K}+s) \in J_{2M_{j(\phi^K_1)}\eps}^K, \forall s \leq t_\eps(0,i(\phi^K_1)) ,\\
 \Big\|\frac{1}{K}(N_{0}^Ke_0+N_{i(\phi^K_1)}^Ke_i)(\phi_1^K+t_\eps(0,i(\phi^K_1)))-n_{0i(\phi^K_1)}^{(eq)}\Big\|\leq \eps^2 \wedge 
\inf_{z \in \mathcal{B}_\eps^K} \frac{n_0^{(z)}(t_\eps(0,i(\phi^K_1)))}{2}
 \Big| B_\eps^K \Big)= 1.
\end{multline*}
\end{lem}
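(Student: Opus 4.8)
The plan is to reduce the whole statement to a fluid-limit (deterministic) approximation on the \emph{bounded} interval $[0,t_\eps(0,i(\phi_1^K))]$, applied after the strong Markov property at $\phi_1^K$, supplemented by a linear birth-and-death comparison that pins the third population $j(\phi_1^K)$ to its prescribed sub-$K$ scale. Since the event $B_\eps^K$ of \eqref{defboncasphase1} is measurable with respect to $\mathcal{F}_{\phi_1^K}$, I would first condition on $B_\eps^K$ together with the state $N^K(\phi_1^K)$ and invoke the strong Markov property: the shifted process $(N^K(\phi_1^K+s),s\ge 0)$ is again a birth-and-death process of the form \eqref{defbirthrate}--\eqref{deathrate} started from $N^K(\phi_1^K)$. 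On $B_\eps^K$, the definition of the event together with the couplings \eqref{couplage0}--\eqref{couplage12} forces $N_{i(\phi_1^K)}^K(\phi_1^K)=\lfloor\eps K\rfloor$ and $N_0^K(\phi_1^K)/K$ to lie within $2\eps(C_{0,1}+C_{0,2})/C_{0,0}$ of $\bar n_0$, so the rescaled pair $(N_0^K,N_{i(\phi_1^K)}^K)(\phi_1^K)/K$ belongs to $\mathcal{B}_\eps^K$; meanwhile $N_{j(\phi_1^K)}^K(\phi_1^K)\in J_{M_i\eps}^{(K,j(\phi_1^K))}$ is of order $K^{a}$ with $a<1$, whence $N_{j(\phi_1^K)}^K/K\to 0$.

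Next I would apply the deterministic approximation \eqref{eq1lemmeA} (Theorem~2.1 p.~456 in \cite{ethiermarkov}) to the rescaled three-dimensional process on the fixed interval $[0,t_\eps(0,i(\phi_1^K))]$. Because the initial $j$-coordinate tends to $0$ and the face $\{n_j=0\}$ is invariant for \eqref{S2}, the limiting trajectory is $(n_0^{(z)}(\cdot),n_{i(\phi_1^K)}^{(z)}(\cdot),0)$, where $(n_0^{(z)},n_{i(\phi_1^K)}^{(z)})$ solves the two-dimensional system \eqref{S} with labels $(0,i(\phi_1^K))$ and $z\in\mathcal{B}_\eps^K$. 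By the very definition \eqref{deftepsz}--\eqref{defteps} of $t_\eps(0,i(\phi_1^K))$ as the supremum over $z\in\mathcal{B}_\eps^K$ of the times after which the flow stays within $\eps^2$ of the equilibrium $n_{0i(\phi_1^K)}^{(eq)}$ of \eqref{defeqdim2}, the deterministic endpoint satisfies $\|n_0^{(z)}(t_\eps)e_0+n_{i(\phi_1^K)}^{(z)}(t_\eps)e_i-n_{0i(\phi_1^K)}^{(eq)}\|\le\eps^2$. As the convergence is uniform on this compact interval and the error is $o(1)$, the rescaled endpoint is within $\eps^2+o(1)$ of $n_{0i(\phi_1^K)}^{(eq)}$; the additional truncation by $\inf_{z}n_0^{(z)}(t_\eps)/2$ is handled the same way, since the approximation also keeps the $0$-coordinate within $o(1)$ of the value $n_0^{(z)}(t_\eps)$. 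Absorbing the $o(1)$ for $K$ large yields the second assertion.

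To control $N_{j(\phi_1^K)}^K$ over the whole phase I would use that, on the probability-$(1-o(1))$ event provided by the previous step, $N_0^K/K$ and $N_{i(\phi_1^K)}^K/K$ remain in a fixed compact subset of $\R_+$ throughout $[0,t_\eps]$. Consequently the per-capita birth and death rates of the $j$-population, namely $\beta_j$ and $\delta_j+(C_{j,0}N_0^K+C_{j,i}N_{i(\phi_1^K)}^K+C_{j,j}N_{j(\phi_1^K)}^K)/K$, are bounded above and below by finite constants (the last competition term is $o(1)$ as long as $N_{j(\phi_1^K)}^K$ stays $o(K)$, which a stopping-time argument confirms). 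A monotone coupling through $Q_2$ then sandwiches $N_{j(\phi_1^K)}^K$ between two linear birth-and-death processes with constant individual rates. Started from $K^{a}\to\infty$ individuals, such processes satisfy a law of large numbers, so their ratio to the initial value concentrates around a deterministic bounded quantity uniformly on $[0,t_\eps]$; hence $N_{j(\phi_1^K)}^K$ is multiplied by a factor of order $K^{o(1)}$ and stays of order $K^{a+o(1)}$, which for large $K$ lies inside the enlarged window $J_{2M_{j(\phi_1^K)}\eps}^K$.

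The delicate point is the apparent circularity: bounding the small population $j$ needs $N_0,N_{i(\phi_1^K)}$ of order $K$, while the two-dimensional approximation of $(N_0,N_{i(\phi_1^K)})$ must not be perturbed by $j$. I expect this to be resolved precisely by the ordering of the two previous arguments, so I would emphasize it: the fluid limit requires only that the initial $j$-coordinate vanish after rescaling (true since it is $o(K)$) and already delivers $N_{j(\phi_1^K)}^K/K\to 0$ uniformly, after which the branching comparison sharpens this into the sharp power-of-$K$ control; in particular $N_{j(\phi_1^K)}^K$ never reaches order $K$, so no genuine circular dependence remains and the two-dimensional approximation is justified a posteriori.
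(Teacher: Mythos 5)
Your overall architecture is sound and a substantial part of it coincides with the paper's argument: the sandwich of $N_{j(\phi_1^K)}^K$ between two linear birth-and-death processes with constant individual rates over the bounded interval $[0,t_\eps(0,i(\phi_1^K))]$, started from a diverging number $K^a$ of individuals, is exactly how the paper obtains the containment in $J_{2M_{j(\phi_1^K)}\eps}^K$ (the paper runs a martingale/Chebyshev estimate on a pure death and a pure birth process, which is your law of large numbers in disguise), and your observation that only a constant multiplicative factor can accumulate over a time of order $1$ is the right reason the enlarged window suffices.

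The genuine gap is the first step: you invoke \eqref{eq1lemmeA} for the \emph{three-dimensional} rescaled process with initial condition $(z_0,\eps,K^{a-1})$. As stated, \eqref{eq1lemmeA} requires the initial point to lie in a compact of $(\R_+^*)^2\times\{0\}$ or of $(\R_+^*)^3$; your initial condition is in neither, since the third coordinate is nonzero but tends to $0$ with $K$. This is precisely the obstruction the authors flag in the Remark following Lemma \ref{lemmephase2}: the jump rate of the $j$-population divided by $K$ is not bounded below, so the deterministic approximation cannot be applied directly, and the appeal to invariance of the face $\{n_j=0\}$ for \eqref{S2} does not by itself license the conclusion. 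A correct repair would require either a strengthened law of large numbers allowing initial conditions that merely converge to a boundary point (not available in the paper's toolbox, and needing a uniformity over $z\in\mathcal{B}_\eps^K$ and over the $j$-initial value in $J_{M_i\eps}^{(K,j(\phi_1^K))}$ that you would have to establish), or the paper's device: first bound $N_0^K+N_{i(\phi_1^K)}^K$ crudely from above by the two-type process without $j$-competition, use this to sandwich $N_{j(\phi_1^K)}^K$ as above, and then construct two-dimensional birth-and-death processes $(N_0^{(K,2,\pm)},N_{i(\phi_1^K)}^{(K,2,\mp)})$ in which the $j$-competition enters only as an explicit $O(K^{b-1})$ perturbation of the death rates; \eqref{eq1lemmeA} applies legitimately to these genuinely two-dimensional processes, and both bounds converge to the same flow of \eqref{S}. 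In other words, the circularity you identify is real, but it is resolved in the opposite order from the one you propose: the small population must be controlled first (which needs only a one-sided a priori bound on $N_0^K+N_{i(\phi_1^K)}^K$), and the two-dimensional fluid limit is justified afterwards.
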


\begin{rem}
 Here we cannot apply directly \eqref{eq2lemmeA}
because it requires positive lower bounds for the jump rates of each population divided by $K$. Indeed in our case, the jump rate of the 
population $2$ is of order 
$$K^{S_{j(\phi_1^K)0}(1/S_{i(\phi_1^K)0}+\alpha(\mathbf{1}_{\{j(\phi_1^K)=1\}}-\mathbf{1}_{\{j(\phi_1^K)=2\}}))}$$ 
which is negligible with respect to $K$. Hence we will couple the process $N_{j(\phi_1^K)}^K$ and the process $(N_0^K,N_{i(\phi_1^K)}^K)$ 
with well known processes to get bounds on their dynamics.
\end{rem}

\begin{proof}[Proof of Lemma \ref{lemmephase2}]
For sake of simplicity we will write $(i,j)$ instead of $(i(\phi^K_1),j(\phi^K_1))$ along the proof.
Let us first prove that during the second phase the process $N_j^K$ does not evolve a lot. 
First, notice that the processes $N_0^K$ and $N_i^K$ are bigger than if they were evolving alone. Hence if we introduce the 
event 
\begin{equation}\label{defCeps}
C_\eps^K:= \Big\{\sup_{s\leq t_\eps(0,i)} \{N_0^K(s)+N_i^K(s)\}>2(\bar{n}_0+\bar{n}_i)K\Big\} ,
\end{equation}
we deduce from \eqref{eq1lemmeA} that,
\begin{equation}\label{limchampN0N1}\lim_{K \to \infty} \sup_{N_0\in I_\eps^{(K,0)}, N_i \in [\eps K-1,\eps K]}
\P_{(N_0,N_i)}(C_\eps^K)=0, \end{equation}
where we used the convention
\begin{equation*} \P_{(N_i,N_j)}(.):=\P(.|(N_i^K,N_j^K)(0)=(N_i,N_j)), \quad (i,j) \in \mathcal{E}^2. \end{equation*}
To control the number of $j$-individuals during the second phase, we introduce two birth and death processes, 
$N_j^{(K,2,-)}$ and $N_j^{(K,2,+)}$ constructed with the same Poisson random measure $Q_j$ as $N_j^K$ (see representation \eqref{defN})
\begin{equation*} \label{defN22-} N_j^{(K,2,-)}(t):= K^{S_{j0}(\frac{1}{S_{i0}}+\alpha(\mathbf{1}_{\{j=1\}}-\mathbf{1}_{\{j=2\}})-M_j\eps)}
-\int_0^t\int_{\R_+}  \mathbf{1}_{\theta\leq (\delta_j+2(\bar{n}_0+\bar{n}_i)(C_{j,0}+C_{j,i}))N_j^{(2,-)}({s^-})} Q_j(ds,d\theta),\end{equation*}
$$ N_j^{(K,2,+)}(t):= K^{S_{j0}(\frac{1}{S_{i0}}+\alpha(\mathbf{1}_{\{j=1\}}-\mathbf{1}_{\{j=2\}})+M_j\eps)}
+\int_0^t\int_{\R_+}  \mathbf{1}_{\theta \leq \beta_jN_j^{(2,+)}(s^-)}Q_j(ds,d\theta).$$
Recall Definitions \eqref{defboncasphase1} and \eqref{defCeps}. We see that on the event $B^K_\eps \cap C_\eps^K$,
\begin{equation*} N_j^{(K,2,-)}(t)\leq N_j^K( \phi_1^K+t )\leq N_j^{(K,2,+)}(t), \quad \text{a.s.} \ \forall t \leq t_\eps(0,i). \end{equation*}
Let us first focus on the process $N_j^{(K,2,-)}$. It is a pure death process with individual death rate:
$$ \delta:=\delta_j+2(\bar{n}_0+\bar{n}_i)(C_{j,0}+C_{j,i}), $$
and we can construct the following martingale associated with this process:
$$M^K_j(t):= N_j^{(K,2,-)}(t)e^{\delta t}-N_j^{(K,2,-)}(0)=-\int_0^t\int_{\R_+}  \mathbf{1}_{\theta\leq \delta N_j^{(K,2,-)}({s^-})}e^{\delta s} \tilde{Q}_j(ds,d\theta), $$
where $\tilde{Q}_j$ is the compensated Poisson measure $\tilde{Q}_j(ds,d\theta):=Q_j(ds,d\theta)-dsd\theta$. Moreover, its quadratic variation 
can be expressed as
$$\langle M_j^K\rangle_t=\int_0^t\int_{\R_+}  \mathbf{1}_{\theta\leq \delta N_j^{(K,2,-)}(s)}e^{2\delta s} dsd\theta=
\int_0^t \delta N_j^{(K,2,-)}(s)e^{2\delta s} ds=
\int_0^t \delta M_j^{K}(s)e^{\delta s} ds. $$
Then Markov Inequality leads to
\begin{multline*}
 \P\Big(N_j^{(K,2,-)}(t_\eps(0,i))<e^{-2\delta t_\eps(0,i)}N_j^{(K,2,-)}(0)\Big)
 = \P\Big(M_j^K(t_\eps(0,i))<\Big(e^{-\delta t_\eps(0,i)}-1\Big)N_j^{(K,2,-)}(0)\Big) \\
\leq \P\Big(\Big(M^K_j(t_\eps(0,i))\Big)^2>\Big(1-e^{-\delta(t_\eps(0,i)}\Big)^2\Big(N_j^{(K,2,-)}(0)\Big)^2\Big)\\
\leq \frac{\E \Big[\langle M_j^K \rangle_{t_\eps(0,i)} \Big]}{\Big(1-e^{-\delta t_\eps(0,i)}\Big)^2\Big(N_j^{(K,2,-)}(0)\Big)^2}
=\frac{\Big(e^{\delta t_\eps(0,i)}-1 \Big)N_j^{(K,2,-)}(0)}{\Big(1-e^{-\delta t_\eps(0,i)}\Big)^2\Big(N_j^{(K,2,-)}(0)\Big)^2},
\end{multline*}
which implies that 
\begin{equation*}
\lim_{K \to \infty} \P\Big(\inf_{s \leq t_\eps(0,i)}\{N_j^{(K,2,-)}(s)\}<e^{-2\delta t_\eps(0,i)}N_j^{(K,2,-)}(0)\Big)=0,
\end{equation*}
as a death process is non increasing. In the same way we prove that 
\begin{equation}\label{majbirthprocess}
\lim_{K \to \infty} \P\Big(\sup_{s \leq t_\eps(0,i)}\{N_j^{(K,2,+)}(s)\}>e^{2\beta_jt_\eps(0,i)}N_j^{(K,2,+)}(0)\Big)=0.
\end{equation}
From \eqref{limchampN0N1} to \eqref{majbirthprocess} we deduce that:
\begin{equation}\label{petitepopphase2}  \lim_{K \to \infty} \ \P \Big(N_{j}^K({\phi_1^K}+s) \in J_{2M_j\eps}^K, \forall s \leq t_\eps(0,i) \Big| B_\eps^K \Big)=1. 
\end{equation}

Now we want to control the dynamics of populations $0$ and $i$ during the second phase. We introduce two pairs of processes, 
$(N_0^{(K,2,-)},N_i^{(K,2,+)})$ and $(N_0^{(K,2,+)},N_i^{(K,2,-)})$ whose dynamics are well know and such that with high probability,
$$ N_k^{(K,2,-)}\leq N_k^K \leq N_k^{(K,2,+)}, \ k \in \{0,i\} ,$$
during the second phase. These processes are defined as follows for $t\geq 0$ and $(a,b)= (i,0)$ or $(0,i)$:
$$ \left\{  \begin{array}{l}
      N_a^{(K,2,-)}(t):=N_a^{K}(\phi_1^K) +\int_0^t\int_{\R_+}  Q_a(ds,d\theta)\Big[ \mathbf{1}_{\theta \leq \beta_aN_a^{(K,2,-)}(s^-)}-\\
      \hspace{1cm}
\mathbf{1}_{0<\theta-\beta_aN_a^{(K,2,-)}(s^-)\leq (\delta_a+C_{a,a}N_a^{(K,2,-)}({s^-})+C_{a,b}N_b^{(K,2,+)}({s^-})
+C_{a,j}K^{S_{j0}(\frac{1}{S_{i0}}+\alpha(\mathbf{1}_{\{j=1\}}-\mathbf{1}_{\{j=2\}})+2M_j\eps)})N_a^{(K,2,-)}({s^-})}\Big],\\
 N_b^{(K,2,+)}(t):=N_b^{K}(\phi_1^K) +\int_0^t\int_{\R_+} Q_b(ds,d\theta)\Big[ \mathbf{1}_{\theta \leq \beta_bN_b^{(K,2,+)}(s^-)}-\\
      \hspace{5cm}
\mathbf{1}_{0<\theta-\beta_bN_b^{(K,2,+)}(s^-)\leq (\delta_b+C_{b,a}N_a^{(K,2,-)}({s^-})+C_{b,b}N_b^{(K,2,+)}({s^-})
)N_b^{(K,2,+)}({s^-})}\Big], 
            \end{array}
  \right. $$
Notice that conditionally on the initial condition of the second phase, $N^K(\phi_1^K)$, 
the processes $(N_k^{(K,2,*)}, k \in \{0,i\}, * \in \{-,+\})$ are independent of the process $N_j^K$, and that on the event 
$$\{\phi_1^K<\infty\} \cap \{N_{j}^K({\phi_1^K}+s) \in J_{2M_j\eps}^K, \forall s \leq t_\eps(0,i)\},$$
we have
\begin{equation*}N_k^{(K,2,-)}(s)\leq N_k^{K}(\phi_1^K+s) \leq N_k^{(K,2,+)}(t),
\  \text{a.s.}\ \forall t \leq t_\eps(0,i)\ \text{and} \  k \in \{0,i\}. \end{equation*}
Moreover, a direct application of \eqref{eq1lemmeA} leads to 
$$ \lim_{K \to \infty} \sup_{(z_0,z_i) \in  I_\eps^{(K,0)}/K\times[\eps/2,\eps]} 
\P(\sup_{0\leq t \leq t_\eps(0,i)}\|(N_0^{(K,2,*)},N_i^{(K,2,\bar{*})})(t)/K-(n_0^{(z)}(t),n_i^{(z)}(t))  \|>\delta) =0, $$
for $\eps$ and $\delta>0$, where $\bar{*}$ denotes the complement of $*$ in $\{-,+\}$, 
$I_\eps^{(K,0)}$ has been defined in \eqref{compact}
and 
$t_\eps(0,i)$ in \eqref{defteps}. Adding \eqref{petitepopphase2} completes the proof 
of Lemma \ref{lemmephase2}.
\end{proof}

The dynamics of the population process is a succession of phases where at least two types of populations have a size of order 
$K$ ($2n$th phases, $n \in \N$) and of phases were at most one population type has a size of order $K$
($(2n+1)$th phases, $n \in \Z_+$). The following lemma completes the description of the dynamics of phases $2n$, 
in the case where two populations have a size of order $K$.
It is a generalization of Lemma \ref{lemmephase2} and we do not give the proof.
Recall Definition \eqref{defteps}. Then

\begin{lem}\label{lemmephase2n2dim}
Let $\{i,j,k\}=\{0,1,2\}$, $\eps>0$, $n,c_i>0$, $\gamma \in (0,1)$ and assume that 
$$(N_i^K(0),N_j^K(0),N_k^K(0))=(\lfloor n K \rfloor,\lfloor \eps K \rfloor,
 \lfloor K^\gamma \rfloor), \ \text{with} \ |n-\bar{n}_i|\leq c_i\eps. $$
Then
\begin{multline*}
  \lim_{K \to \infty} \ \P \Big(N_{k}^K(s) \in [K^{\gamma-\eps},K^{\gamma+\eps}], \forall s \leq t_\eps(i,j) ,\\
 \Big\|\frac{1}{K}(N_{i}^Ke_i+N_{j}^Ke_j)(t_\eps(i,j))-n_{ij}^{(eq)}\Big\|\leq \eps^2 \wedge 
\inf_{z \in \mathcal{B}_\eps^K} \frac{n_i^{(z)}(t_\eps(i,j))}{2} \Big)=1. 
\end{multline*}
\end{lem}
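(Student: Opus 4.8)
The plan is to reproduce, in this slightly more general setting, the coupling argument of the proof of Lemma~\ref{lemmephase2}. The essential feature is the separation of scales: types $i$ and $j$ have sizes of order $K$ and will be shown to follow the two-dimensional Lotka--Volterra system \eqref{S}, whereas type $k$ has size of order $K^\gamma$ with $\gamma \in (0,1)$, hence a density $N_k^K/K = O(K^{\gamma-1})$ that is negligible. The point is that each group feels the other only through a competition term that is either bounded (the effect of $i,j$ on $k$) or vanishing (the effect of $k$ on $i,j$), and I would exploit this through three steps built on the Poisson representation \eqref{defN} and on the deterministic approximation \eqref{eq1lemmeA}. The one genuine subtlety is an apparent circularity: bounding $N_k^K$ needs an a priori bound on $N_i^K+N_j^K$, while the sharp two-sided control of $N_i^K,N_j^K$ uses $N_k^K\le K^{\gamma+\eps}$. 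This is broken by first extracting a one-sided estimate (valid because omitting competition only increases populations) and only then feeding it into the refined couplings.

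First, I would obtain the a priori upper bound on the large populations needed to control the death rate of the $k$-individuals. Dropping the competition exerted by the other types only inflates a population, so $N_i^K$ and $N_j^K$ are each dominated by logistic birth--death processes starting near $\bar{n}_i K$ and $\eps K$; applying \eqref{eq1lemmeA} to these, exactly as in the derivation of \eqref{limchampN0N1}, shows that the event
$$ C_\eps^K := \Big\{ \sup_{s \le t_\eps(i,j)}\big(N_i^K(s)+N_j^K(s)\big) > 2(\bar{n}_i + \bar{n}_j)K \Big\} $$
has probability tending to $0$.

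Second, on $(C_\eps^K)^c$ the individual death rate of a $k$-individual is at most $\delta := \delta_k + 2(\bar{n}_i + \bar{n}_j)(C_{k,i}+C_{k,j})$, so I would sandwich $N_k^K$ between a pure death process of individual rate $\delta$ and a pure birth process of individual rate $\beta_k$, both started at $\lfloor K^\gamma\rfloor$ and built on the Poisson measure $Q_k$ of \eqref{defN}, just as the processes $N_j^{(K,2,-)}, N_j^{(K,2,+)}$ of Lemma~\ref{lemmephase2}. The martingale and Markov-inequality estimate used there, together with the monotonicity of pure birth and pure death processes, gives that with probability tending to $1$,
$$ e^{-2\delta t_\eps(i,j)}\lfloor K^\gamma\rfloor \le \inf_{s\le t_\eps(i,j)} N_k^K(s) \le \sup_{s\le t_\eps(i,j)} N_k^K(s) \le e^{2\beta_k t_\eps(i,j)}\lfloor K^\gamma\rfloor. $$
Since $t_\eps(i,j)$ is finite and independent of $K$, the prefactors are $O(1)$ while $K^{\pm\eps}\to\infty$; hence for $K$ large the whole trajectory of $N_k^K$ lies in $[K^{\gamma-\eps},K^{\gamma+\eps}]$, which is the first assertion.

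Third, using $N_k^K \le K^{\gamma+\eps}$, the competition that type $k$ exerts on types $i$ and $j$ is at most $C_{\cdot,k}K^{\gamma-1+\eps}\to 0$. I would then sandwich $(N_i^K,N_j^K)$ between the two pairs of two-dimensional competitive systems $(N_i^{(K,-)},N_j^{(K,+)})$ and $(N_i^{(K,+)},N_j^{(K,-)})$ constructed as in Lemma~\ref{lemmephase2}, the lower-bounding coordinate of each pair carrying the worst-case extra death rate $C_{\cdot,k}K^{\gamma-1+\eps}$. Applying \eqref{eq1lemmeA} to each pair, and noting that this extra death rate vanishes, both bounds converge uniformly on $[0,t_\eps(i,j)]$ to the same solution $(n_i^{(z)},n_j^{(z)})$ of \eqref{S}. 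By the definition \eqref{deftepsz}--\eqref{defteps} of $t_\eps(i,j)$ this solution is within $\eps^2$ of $n_{ij}^{(eq)}$ at time $t_\eps(i,j)$; since $i$ is the surviving type in the regime where this lemma is used, $\inf_{z}n_i^{(z)}(t_\eps(i,j))$ is bounded below by a positive constant, so the threshold $\eps^2 \wedge \inf_z n_i^{(z)}(t_\eps(i,j))/2$ reduces to $\eps^2$ for $\eps$ small. Choosing the accuracy in \eqref{eq1lemmeA} small enough then yields the second assertion, and intersecting the three high-probability events completes the proof. I expect the bookkeeping of the vanishing $k$-competition term inside the two-sided couplings of this last step to be the most delicate part, but it is entirely analogous to the corresponding step in Lemma~\ref{lemmephase2}.
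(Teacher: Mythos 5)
Your proposal is correct and takes essentially the approach the paper intends: the paper omits the proof of this lemma, saying only that it generalizes Lemma \ref{lemmephase2}, and your three steps (the a priori bound via the event $C_\eps^K$, the pure birth/pure death sandwich of $N_k^K$ with the martingale--Markov estimate, and the two-sided coupling of $(N_i^K,N_j^K)$ with perturbed two-dimensional systems fed into \eqref{eq1lemmeA}) are precisely that generalization. One minor caveat: in the paper's actual uses of this lemma (e.g.\ the fourth phase in the proofs of Propositions \ref{propspeedup} and \ref{prolizards}, where $S_{ij}<0<S_{ji}$) type $i$ is the one being outcompeted, so $\inf_{z}n_i^{(z)}(t_\eps(i,j))/2$ can be the binding term of the minimum rather than $\eps^2$ --- this does not damage your argument, since both terms are positive constants independent of $K$ and \eqref{eq1lemmeA} gives arbitrary accuracy, but the $\wedge$ is there precisely to guarantee that the outcompeted type retains a positive density of known order for the next stochastic phase.
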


\subsection{Three-dimensional case}\label{section3dim}

From a probabilistic point of view, the case where the three types of populations have a size of order $K$ is simpler, as we can 
approximate the rescaled population process by the three-dimensional deterministic Lotka-Volterra system \eqref{S2}
according to Equation \eqref{eq1lemmeA}.
But the behaviour of the solutions of the three-dimensional Lotka-Volterra systems are much more various than these of the two-dimensional systems.
They have been studied in detail by Zeeman and coauthors \cite{zeeman1993hopf,zeeman1998three,zeeman2003local} and we will now present some of their 
findings.

As in the case of two dimensional systems, the invasion fitnesses 
$$(S_{ij}, S_{ijk}, i,j, \text{ and } k \text{ distinct in } \{0,1,2\})$$ 
will determine the overall behaviour of the flows. In the two dimensional case for interacting populations of types $i$ and $j$,
there are three possibilities:
\begin{enumerate}
 \item[$\bullet$] Either $S_{ij}<0<S_{ji}$; then the only stable fixed point is $\bar{n}_je_j$
 \item[$\bullet$] Or $S_{ij},S_{ji}>0$; then the only stable fixed point is $\bar{n}_{ij}^{(i)}e_i+\bar{n}_{ij}^{(j)}e_j$
 \item[$\bullet$] Or $S_{ij},S_{ji}<0$; then there are two stable fixed points, $\bar{n}_ie_i$ and $\bar{n}_je_j$.
\end{enumerate}

\begin{figure}[h]
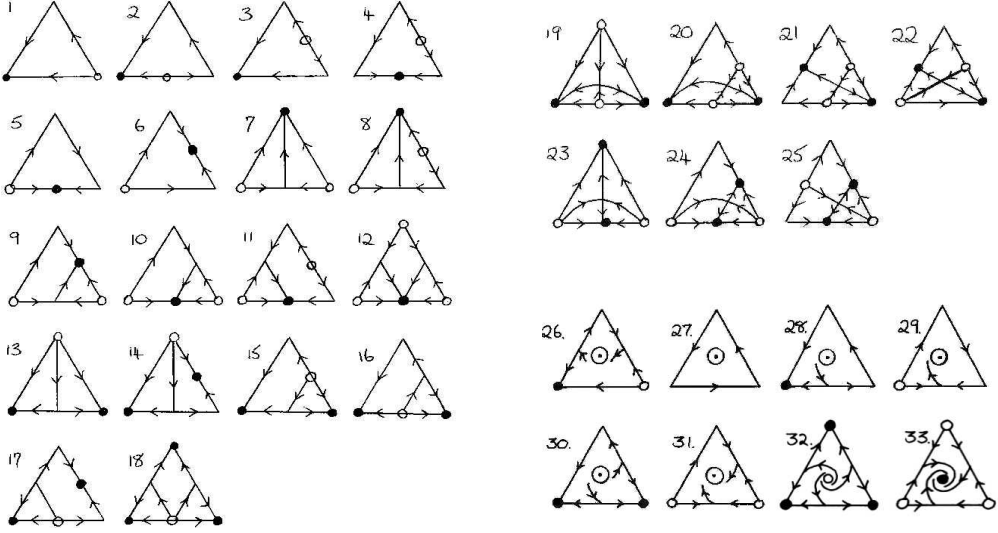

\centering
\includegraphics[width=6cm,height=7cm]{classes1to18.jpeg}\hspace{1cm}
\includegraphics[width=6cm,height=7cm]{classes19to33.jpeg}
\caption{The phase portraits on $\Sigma$. A fixed point is represented by a close dot $\bullet$ if it attracts on $\Sigma$; by an open dot 
$\circ$ if it repels on $\Sigma$, and by the intersection of its hyperbolic manifolds if it is a saddle on $\Sigma$. (figures found in \cite{zeeman1993hopf}
and modified to include results of \cite{zeeman1998three} about classes 32 and 33)}
\label{classes1to33}
\end{figure} 

In the three-dimensional case, Zeeman \cite{zeeman1993hopf} numbered 33 equivalence classes.
An ``equivalence class'' is a given combination of invasion fitnesses signs modulo permutation of the indices.
Flows of systems belonging to the same class have the same type of long time behaviour, which is well determined in the 25 first 
classes and more complex in the 8 remaining classes.
These behaviours are represented in Figure \ref{classes1to33} and we will now explain their meaning. 
By an application of Hirsch's Theorem \cite{hirsch1988systems} Zeeman proved that there exists an invariant hypersurface of $\R_+^3$, denoted $\Sigma$, 
such that every non zero trajectory of a three-dimensional competitive Lotka-Volterra system is asymptotic to one in $\Sigma$ 
for large times. $\Sigma$ is called the carrying simplex, beeing a balance between the growth of small populations and the competition 
of large populations. A locally attracting fixed point is represented by a close dot $\bullet$, a locally repelling one by an open dot 
$\circ$, and a saddle by the intersection of its hyperbolic manifolds.

 The classes 1 to 25 have no interior fixed points, 
and their dynamics are 
well known: the system converges to one of the stable fixed points with at most two positive coordinates.
The solutions of systems in class 32 converge to one of the monomorphic equilibrium density $\bar{n}_i$ for $i \in \{0,1,2\}$.
The solutions of systems in class 33 converge to the unique interior fixed points (see \cite{zeeman1998three} for the two last assertions). 
In these classes 26 to 31 the system can have periodic orbits depending on the 
value of the parameters (see Figure \ref{classes26to31}). More precisely Zeeman proved that there exists systems with and without periodic orbits in each of these classes.
We have no general criteria to discriminate between cyclical and converging (or diverging) behaviours of the flows in these classes. Note 
however that Hofbauer and Sigmund (Theorem 15.3.1 in \cite{hofbauer2003evolutionary}) and Zeeman and Zeeman (Theorem 6.7 in \cite{zeeman2003local}) 
provided sufficient conditions to have a global attractor (or repellor).

\begin{figure}[h]
\centering
\includegraphics[width=6cm,height=3cm]{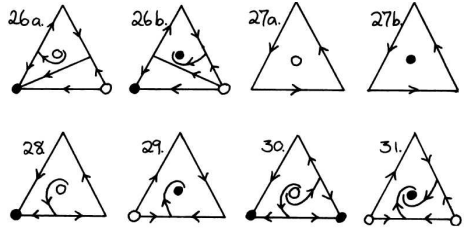}
\caption{Phase portraits in classes 26 to 31 in absence of periodic orbits. (figures found in \cite{zeeman1993hopf})}
\label{classes26to31}
\end{figure}

\section{Phases $3$ and $2n+1$}\label{thirdphase}

At the beginning of the third phase there are two possible cases: either there are already individuals of type $2$ in the population; this 
corresponds to Assumptions \ref{ass1} or \ref{ass2}. Then two types of individuals have sizes close to $\bar{n}_{ij}^{(eq)}K$ 
(defined in \eqref{defeqdim2}), and the last type has a size
of order $K^\gamma$ for some $\gamma \in (0,1)$.
Or the second mutation has not occurred yet (Assumption \ref{ass3}) and there are only individuals of type $0$ and $1$.
Let us first focus on Assumptions \ref{ass1} and \ref{ass2}. 
Then at the beginning of the third phase we have, with a probability close to $S_{10}S_{20}/(\beta_1\beta_2)$, two kinds of initial conditions:
\begin{enumerate}
 \item[$\bullet$] Either $S_{0i(\phi_1^K)}<0<S_{i(\phi_1^K)0}$; 
 then $N_{i(\phi_1^K)}^K$ is close to $\bar{n}_{i(\phi_1^K)}K$, $N_0^K$ 
equals $\lfloor nK\rfloor$ for some
\\
$n \in [\inf_{z \in \mathcal{B}_\eps^K}n_0^{(z)}(t_\eps(0,i(\phi_1^K))),\sup_{z \in \mathcal{B}_\eps^K}n_0^{(z)}(t_\eps(0,i(\phi_1^K)))]$ 
and $N_{j(\phi_1^K)}$ belongs to $J_{2M_{j(\phi_1^K)}\eps}$,
 \item[$\bullet$] Or $S_{0i(\phi_1^K)},S_{i(\phi_1^K)0}>0$; then $(N_0^K,N_{i(\phi_1^K)}^K)$ is close to $(\bar{n}_{0i(\phi_1^K)}^{(0)} K, \bar{n}_{0i(\phi_1^K)}^{(i(\phi_1^K))}K)$ 
and $N_{j(\phi_1^K)}$ belongs to $J_{2M_{j(\phi_1^K)}\eps}$.
\end{enumerate}

In fact 
such initial conditions will also be found in phases $2n+1$, with $n \geq 2$. Indeed we will see that in some cases there will still be two 
populations with sizes of order $K$ and one population with a size of a smaller order after the two first alternations of stochastic 
and deterministic phases (see Figure \ref{RPScycles} for instance).
Lemma \ref{lemmephase2n+1} describes the dynamics of a stochastic phase with such initial conditions. Before stating this lemma, 
we introduce a finite subset of $\N^2$ and a stopping time:

\begin{equation*}I_\eps^{(K,ij)}:=\Big[(N_i,N_j), 
\Big\|\frac{1}{K}(N_ie_i+N_je_j)-(\bar{n}_{ij}^{(i)}e_i+\bar{n}_{ij}^{(j)}e_j) \Big\|
\leq 2\eps \frac{|C_{ij}C_{jk}-C_{ik}C_{jj}| + |C_{ji}C_{ik}-C_{jk}C_{ii}|}{|C_{ii}C_{jj}-C_{ij}C_{ji}|} \Big], \end{equation*}
\begin{equation*}  \tilde{T}^{(K,ij)}_\eps := \inf \Big\{ t \geq 0, (N^K_i(t), N_j^K(t))\notin I_\eps^{(K,ij)} \Big\}.  \end{equation*}

\begin{lem}\label{lemmephase2n+1}
 Let us take $i$, $j$ and $k$ distinct in $\{0,1,2\}$. Assume that $S_{ij}>0$ and $N_k^{K}(0)=\lfloor K^\gamma \rfloor$ 
for some $0<\gamma<1$.
\begin{enumerate}
 \item[$\bullet$] If $S_{ji}<0$, $S_{ki}>0$, $N_i^K(0) \in [(\bar{n}_i-\eps^2)K, (\bar{n}_i+\eps^2)K]$ and 
$N_j^K(0)=\lfloor \delta K \rfloor$ for some $0<\delta<\eps^2$, 
$$ \P\Big( T_0^{(K,j)}<T_{\eps K}^{(K,k)}<\tilde{T}_\eps^{(K,i)} \Big)= 1+O_K(\eps), 
\quad \text{for}\quad \frac{1}{|S_{ji}|}<\frac{1-\gamma}{S_{ki}}, $$
$$
\P\Big( T_{\eps K}^{(K,k)}<\tilde{T}_\eps^{(K,i)} \wedge T_0^{(K,j)}, N_j^K(T_{\eps K}^{(K,k)})
 \in \Big[K^{1-\frac{(1-\gamma+\eps)|S_{ji}|}{S_{ki}}},K^{1-\frac{(1-\gamma-\eps)|S_{ji}|}{S_{ki}}}\Big] \Big)
=1+O_K(\eps), \quad \text{for}\quad \frac{1}{|S_{ji}|}>\frac{1-\gamma}{S_{ki}}.
$$
 \item[$\bullet$] If $S_{ji}>0$, $S_{kij}>0$, 
 $N_l^K(0) \in [(\bar{n}_{ij}^{(l)}-\eps^2)K, (\bar{n}_{ij}^{(l)}+\eps^2)K]$ for $l \in \{i,j\}$, 
 $$ \P\Big( T_{\eps K}^{(K,k)}<\tilde{T}_\eps^{(K,ij)} \Big)= 1+O_K(\eps), $$
 \item[$\bullet$] If $S_{ji}<0$, $S_{ki}<0$, $N_i^K(0) \in [(\bar{n}_i-\eps^2)K, (\bar{n}_i+\eps^2)K]$ and $N_j^K(0)=\lfloor \delta K \rfloor$ for some $0<\delta<\eps^2$, 
 $$ \P\Big( T_0^{(K,j)} \vee T_0^{(K,k)}<\tilde{T}_\eps^{(K,i)} \Big)= 1+O_K(\eps), $$
\item[$\bullet$] If $S_{ji}>0$, $S_{kij}<0$, 
$N_l^K(0) \in [(\bar{n}_{ij}^{(l)}-\eps^2)K, (\bar{n}_{ij}^{(l)}+\eps^2)K]$ for $l \in \{i,j\}$, 
 $$ \P\Big( T_0^{(K,k)}<\tilde{T}_\eps^{(K,ij)} \Big)= 1+O_K(\eps). $$
\end{enumerate}
\end{lem}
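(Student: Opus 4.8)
The plan is to treat all four cases by the coupling strategy already used in the proofs of Lemmas \ref{lemphase11} and \ref{lemmephase2}: as long as the population(s) carrying the abundant type(s) remain within an $O(\eps)$-neighbourhood of their Lotka--Volterra equilibrium, each rare population feels an almost density-independent environment, so it can be squeezed between two linear birth and death processes whose Malthusian parameter is the relevant invasion fitness up to an $O(\eps)$ error. In the monomorphic cases (first and third bullets) type $i$ is abundant and types $j,k$ are rare; in the coexistence cases (second and fourth bullets) types $i,j$ are abundant and only $k$ is rare.

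First I would set up the couplings. Writing $N^K$ through the Poisson representation \eqref{defN}, I would introduce, exactly as in \eqref{defN1star}--\eqref{defN2star}, processes $N_k^{(\eps,\pm)}$ (and $N_j^{(\eps,\pm)}$ in the cases where $j$ is rare) obtained by replacing the exact competition term $\sum_l C_{k,l}N_l^K/K$ by $\sum_l C_{k,l}$ times the equilibrium density inflated or deflated by $\eps$. On the event $\{t<\tilde{T}^{(K,i)}_\eps\}$ (first and third bullets) or $\{t<\tilde{T}^{(K,ij)}_\eps\}$ (second and fourth bullets), and while the rare populations stay $\le \eps K$, the competition term is pinned, so $N_k^{(\eps,-)}(t)\le N_k^K(t)\le N_k^{(\eps,+)}(t)$ almost surely; the individual birth rate of $N_k^{(\eps,\pm)}$ is $\beta_k$ and its death rate corresponds to a net growth rate equal to $S_{ki}$ (resp. $S_{kij}$) up to $O(\eps)$. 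Since $N_k^K(0)=\lfloor K^\gamma \rfloor \to \infty$, a supercritical bounding process survives with probability $1-O_K(\eps)$ and a subcritical one dies out surely, which is why all the probabilities in the statement are $1+O_K(\eps)$.

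Second I would prove that the abundant block stays put long enough. In the monomorphic cases this is the confinement estimate \eqref{eq2lemmeA}: while $N_k^K\le \eps K$ and $N_j^K\le \eps^2 K$ the competitors contribute only an $O(\eps)$ perturbation to the death rate of type $i$, so $N_i^K$ is squeezed between two density-dependent birth and death processes that remain in $I_\eps^{(K,i)}$ up to time $e^{VK}$; hence $\tilde{T}^{(K,i)}_\eps>e^{VK}$ with probability $1-O_K(\eps)$. In the coexistence cases the same role is played by a two-dimensional version of \eqref{eq2lemmeA}: the rescaled pair $(N_i^K,N_j^K)/K$ is an $O(\eps)$-perturbation of the system \eqref{S} with labels $i,j$, whose equilibrium $(\bar{n}_{ij}^{(i)},\bar{n}_{ij}^{(j)})$ is the unique stable point under \eqref{defnbara2}, so $\tilde{T}^{(K,ij)}_\eps$ is also larger than $e^{VK}$ with high probability. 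I expect this two-dimensional confinement to be the main obstacle, since it requires the perturbed planar Lotka--Volterra flow to keep attracting toward the interior equilibrium uniformly over the $O(\eps)$-tube while the invader $k$ climbs; every other ingredient is one-dimensional and follows from the quoted birth and death estimates.

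Third, with the confinement in hand, I would read off each conclusion from the known birth and death asymptotics. In the first bullet $N_j^{(\eps,\pm)}$ is subcritical with parameter $-|S_{ji}|+O(\eps)$ started at $\lfloor \delta K\rfloor$, so by \eqref{ext_times} its extinction time is $(1+O(\eps))\log K/|S_{ji}|$, while $N_k^{(\eps,\pm)}$ is supercritical with parameter $S_{ki}+O(\eps)$ started at $\lfloor K^\gamma\rfloor$, so by \eqref{equi_hitting} it reaches $\lfloor \eps K\rfloor$ at time $(1+O(\eps))(1-\gamma)\log K/S_{ki}$. Comparing these equivalents gives the dichotomy $1/|S_{ji}| \lessgtr (1-\gamma)/S_{ki}$; in the regime where $k$ wins, evaluating the subcritical trajectory of $N_j$ at the invasion time of $k$ yields $N_j^K(T^{(K,k)}_{\eps K})\approx K^{1-(1-\gamma)|S_{ji}|/S_{ki}}$, the $\pm\eps$ in the exponent coming from the $O(\eps)$ spread of the hitting time, which is exactly the stated window. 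The second bullet is the same invasion computation with $S_{kij}$ in place of $S_{ki}$ and no companion $j$ to track; the third and fourth bullets only require that two (resp. one) subcritical bounding processes hit $0$ in time $O(\log K)\ll e^{VK}$, which is immediate from \eqref{ext_times}. Combining the couplings, the confinement, and these asymptotics, and letting $K\to\infty$, gives the four claimed identities.
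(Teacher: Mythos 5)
Your proposal is correct and follows essentially the same route the paper indicates for this lemma: coupling the rare type(s) with supercritical or subcritical birth and death processes whose rates are the invasion fitnesses up to $O(\eps)$, confining the resident type(s) near their (one- or two-dimensional) equilibrium via the exponential-time estimate \eqref{eq2lemmeA}, and reading off the hitting/extinction times from \eqref{ext_times} and \eqref{equi_hitting} exactly as in the proof of Lemma \ref{lemphase11}. The paper omits the details for precisely this reason, so nothing further is needed.
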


The proof of this Lemma is very similar to the proof of Lemma \ref{lemphase11}. It consists in comparing the population process with birth and death processes, 
which can be subcritical in this case, and to show that the equilibrium sizes $\lfloor\bar{n}_iK\rfloor$ or $(\lfloor\bar{n}_{ij}^{(i)}K\lfloor,\rfloor\bar{n}_{ij}^{(j)}K\rfloor)$ are not modified much by 
population(s) with size(s) smaller than $\lfloor\eps K \rfloor$. Hence we do not detail the proof. 
We are also able to get approximation of the stochastic phase duration:

\begin{lem}\label{lemdureephase2n+1ass}
  Let us take $i$, $j$ and $k$ distinct in $\{0,1,2\}$. Assume that $S_{ij}>0$ and $N_k^{K}(0)=\lfloor K^\gamma \rfloor$ 
for some $0<\gamma<1$. Then there exists a finite constant 
$c$ such that,
\begin{enumerate}
 \item[$\bullet$] If $S_{ji}<0$, $S_{ki}>0$, $N_i^K(0) \in [(\bar{n}_i-\eps^2)K, (\bar{n}_i+\eps^2)K]$ and 
$N_j^K(0)=\lfloor \delta K \rfloor$ for some $0<\delta<\eps^2$, 
$$\P\Big((1-c\eps)\frac{1-\gamma}{{S}_{ki}}\log K<T_{\eps K}^{(K,k)}<(1+c\eps)
\frac{1-\gamma}{{S}_{ki}}\log K\Big| T_{\eps K}^{(K,k)}<\tilde{T}_\eps^{(K,i)}\Big)= 1+O_K(\eps).$$
 \item[$\bullet$] If $S_{ji}>0$, $S_{kij}>0$, 
$N_l^K(0) \in [(\bar{n}_{ij}^{(l)}-\eps^2)K, (\bar{n}_{ij}^{(l)}+\eps^2)K]$, $l \in \{i,j\}$, 
$$\P\Big((1-c\eps)\frac{1-\gamma}{{S}_{kij}}\log K<T_{\eps K}^{(K,k)}<(1+c\eps)
\frac{1-\gamma}{{S}_{kij}}\log K\Big| T_{\eps K}^{(K,k)}<\tilde{T}_\eps^{(K,ij)}\Big)= 1+O_K(\eps).$$
 \item[$\bullet$] If $S_{ji}<0$, $S_{ki}<0$, $N_i^K(0) \in [(\bar{n}_i-\eps^2)K, (\bar{n}_i+\eps^2)K]$ and $N_j^K(0)=\lfloor \delta K \rfloor$ for some $0<\delta<\eps^2$, 
\begin{multline*}
\P\Big((1-c\eps)\Big( \frac{1}{|S_{ji}|} \vee \frac{\gamma}{|{S}_{ki}|}\Big)\log K<T_0^{(K,j)} \vee T_0^{(K,k)}<(1+c\eps)
\Big( \frac{1}{|S_{ji}|} \vee \frac{\gamma}{|{S}_{ki}|}\Big)\log K\Big|  T_0^{(K,j)} \vee T_0^{(K,k)}<\tilde{T}_\eps^{(K,i)}\Big)
 \\ = 1+O_K(\eps).
\end{multline*}
\item[$\bullet$] If $S_{ji}>0$, $S_{kij}<0$, 
$N_l^K(0) \in [(\bar{n}_{ij}^{(l)}-\eps^2)K, (\bar{n}_{ij}^{(l)}+\eps^2)K]$ for $l \in \{i,j\}$, 
$$\P\Big((1-c\eps)\frac{1}{|{S}_{kij}|}\log K<T_0^{(K,k)}<(1+c\eps)
\frac{1}{|{S}_{kij}|}\log K\Big|  T_0^{(K,k)}<\tilde{T}_\eps^{(K,ij)} \Big)= 1+O_K(\eps).$$\end{enumerate}
\end{lem}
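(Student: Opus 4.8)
The plan is to reproduce the coupling argument of Lemma~\ref{lemphase11}, the only new feature being that the birth-and-death (BD) processes bracketing the small population(s) may now be subcritical. Throughout a phase of this type one population ($i$), or two populations ($i$ and $j$), sit at a size of order $K$ near a deterministic equilibrium, while the remaining population(s) are of strictly smaller order. First I would record, via \eqref{eq2lemmeA}, that the large population(s) stay inside the compact $I_\eps^{(K,i)}$ (resp.\ $I_\eps^{(K,ij)}$) around $\bar n_i$ (resp.\ around $(\bar n_{ij}^{(i)},\bar n_{ij}^{(j)})$) up to a time of order $e^{VK}$ with probability tending to $1$; in the two-type case this uses the two-dimensional Lotka--Volterra stability that defines $I_\eps^{(K,ij)}$. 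As long as this holds and the small populations remain below $\lfloor\eps K\rfloor$, the competitive pressure felt by type $k$ stays within an $O(\eps)$ window of $C_{k,i}\bar n_i$ (resp.\ of $C_{k,i}\bar n_{ij}^{(i)}+C_{k,j}\bar n_{ij}^{(j)}$). I can therefore sandwich $N_k^K$ (and, in the third item, $N_j^K$) between two BD processes with constant per-capita growth rates $s_k^{(\eps,\pm)}$ bracketing the invasion fitness $S_{ki}$ (resp.\ $S_{kij}$ from \eqref{deffitinv2}), built from the same Poisson measures as in \eqref{defN1star}--\eqref{couplage12} and satisfying $|s_k^{(\eps,+)}-s_k^{(\eps,-)}|\le c\eps$ as in \eqref{diffs+s-}.

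The key preliminary step, paralleling \eqref{Ttildeinf2eps0}, is to show that with probability $1-O(\eps)$ this coupling persists for the whole duration under study, i.e.\ that $\tilde T_\eps^{(K,i)}$ (resp.\ $\tilde T_\eps^{(K,ij)}$) exceeds the relevant hitting or extinction time. As in \eqref{decoupage1} I would split according to whether $\tilde T_\eps^{(K,i)}$ falls before or after $e^{VK}$: the ``before'' part vanishes by \eqref{eq2lemmeA}, while on the ``after'' event the coupling is valid and the fate of the small population(s) is transferred to that of the bracketing processes through \eqref{ext_times} and \eqref{equi_hitting}. The perturbation to the competitive pressure is of relative size $O(\eps)$ while a small population is comparable to $\eps K$, and of size $O(K^{\gamma-1})$ or $O(\delta)$ otherwise, hence negligible, so the large population(s) indeed cannot leave $I_\eps^{(K,i)}$ (resp.\ $I_\eps^{(K,ij)}$) before the small one's destiny is settled.

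Once the coupling is in force, each item reduces to reading off the duration of the bracketing BD processes, conditionally on the corresponding event already identified in Lemma~\ref{lemmephase2n+1}. In the supercritical items ($S_{ki}>0$ or $S_{kij}>0$) the process $N_k$ starts at $\lfloor K^\gamma\rfloor$ and must reach $\lfloor\eps K\rfloor$; applying \eqref{equi_hitting} to the two bracketing processes gives, conditionally on non-extinction, a hitting time concentrated at $\frac{1-\gamma}{S_{ki}}\log K$ (resp.\ $\frac{1-\gamma}{S_{kij}}\log K$), the factor $1-\gamma$ being the log-distance from $K^\gamma$ to $\eps K$ and the relative error $c\eps$ being inherited from $|s_k^{(\eps,+)}-s_k^{(\eps,-)}|$. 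In the subcritical items I would instead invoke \eqref{ext_times}, by which a subcritical BD process started at size $m$ goes extinct at a time concentrated at $\frac{\log m}{|\text{growth rate}|}$: for $N_j$ started at $\lfloor\delta K\rfloor$ (of order $K$) this yields $\frac{1}{|S_{ji}|}\log K$ up to $(1\pm c\eps)$, for $N_k$ started at $\lfloor K^\gamma\rfloor$ it yields the order displayed in the third and fourth items, and the third item then takes the maximum of the two asymptotically independent extinction times.

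The main obstacle is the coupling-persistence step. One must control the feedback of the growing or vanishing small population(s) on the equilibrium of the large one(s) uniformly over the $\log K$-long time window, and, in the growth items, handle the final instants before $N_k$ reaches $\lfloor\eps K\rfloor$, where that feedback ceases to be negligible. Because the feedback there is only $O(\eps)$, it merely shifts the effective growth rate within the $[s_k^{(\eps,-)},s_k^{(\eps,+)}]$ window, so the concentration of the hitting time survives up to the stated multiplicative $(1\pm c\eps)$ factor; making this quantitative, together with the exponential-time confinement from \eqref{eq2lemmeA}, is the technical heart of the argument. Everything else is a routine application of \eqref{ext_times} and \eqref{equi_hitting} case by case.
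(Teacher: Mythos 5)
Your proposal is correct and follows exactly the route the paper itself takes: the authors give no detailed proof of this lemma, stating only that it follows from the couplings with (now possibly subcritical) birth-and-death processes used for Lemma \ref{lemphase11}, combined with \eqref{eq2lemmeA} to confine the resident population(s) near their equilibrium over an $e^{VK}$ horizon and with \eqref{ext_times}--\eqref{equi_hitting} to read off the hitting and extinction times, which is precisely your argument including the coupling-persistence step modelled on \eqref{decoupage1}. One caveat: for the fourth bullet your computation (extinction of a subcritical process started at $\lfloor K^\gamma\rfloor$) gives a time concentrated at $\gamma\log K/|S_{kij}|$ rather than the $\log K/|S_{kij}|$ displayed in the statement -- by consistency with the $\gamma/|S_{ki}|$ term of the third bullet this is presumably a missing factor $\gamma$ in the paper, so you should flag the discrepancy rather than assert that your estimate ``yields the order displayed''.
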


Let us now describe the dynamics of the third phase when Assumption \ref{ass3} holds. 
As in Lemma \ref{lemmephase2n+1} there are five possibilities, depending on the signs 
of the invasion fitnesses and of the value of $\alpha$.
{To take into account the state of the population at the end of the first phase, we introduce the following notation 
under Assumption \ref{ass3}:
\begin{multline*}
 \P_{\phi_1(N_0,N_1)}(.):=\P(.|(N_0^K,N_1^K)(0)=(N_0,N_1)
\\ \text{ and the second mutant appears at time }
(\alpha-1/S_{10}) \log K)
\end{multline*}
Recall that on the event $\{\phi_1^K < \infty\}$, Lemma \ref{lemphase13} ensures that $\phi_1^K$ is of order $\log K/S_{10}$ 
with a probability close to $1$.
Then we can state the following properties for the population dynamics during the third phase:

\begin{lem}\label{lemmephase3}
 Under Assumptions \ref{asscondinitiale} and \ref{ass3},
\begin{enumerate}
 \item[$\bullet$] If $S_{01}<0$, $S_{21}>0$ and ${1}/{|S_{01}|}<\alpha-{1}/{S_{10}}+1/{S_{21}}$,
$$ \inf_{N_0\leq \eps^2 K, N_1 \in I_\eps^{(K,1)}}\P_{\phi_1(N_0,N_1)}\Big( T_0^{(K,0)}<T_{\eps K}^{(K,2)}<\tilde{T}_\eps^{(K,1)} \Big)
=\frac{S_{21}}{\beta_2}+O_K(\eps), $$
 \item[$\bullet$] If $S_{01}<0$, $S_{21}>0$ and ${1}/{|S_{01}|}>\alpha-{1}/{S_{10}}+1/{S_{21}}$,
\begin{multline*}
  \inf_{\inf_{z \in \mathcal{B}_\eps^K} n_0^{(z)}(t_\eps(0,1))/2 \leq N_0/K\leq \eps^2, N_1 \in I_\eps^{(K,1)}}\P_{\phi_1(N_0,N_1)}
  \Big( T_{\eps K}^{(K,2)}<\tilde{T}_{\eps K}^{(K,1)} \wedge T_0^{(K,0)}, N_0^K(T_\eps^{(K,k)}) \\
 \in \Big[K^{1-|S_{01}|(\alpha-\frac{1}{S_{10}}+\frac{1}{S_{21}}+\eps)},K^{1-|S_{01}|(\alpha-\frac{1}{S_{10}}+\frac{1}{S_{21}}-\eps)}\Big] \Big)
 = \frac{S_{21}}{\beta_2}+O_K(\eps),
\end{multline*}
 \item[$\bullet$] If $S_{01}>0$, $S_{201}>0$, 
 $$ \inf_{\|(N_0,N_1)/K-(\bar{n}_{01}^{(0)},\bar{n}_{01}^{(1)})\|\leq \eps}
 \P_{\phi_1(N_0,N_1)}\Big( T_{\eps K}^{(K,2)}<\tilde{T}_\eps^{(K,01)} \Big)= \frac{S_{201}}{\beta_2}+O_K(\eps), $$
 \item[$\bullet$] If $S_{01}<0$, $S_{21}>0$, 
 $$ \inf_{N_0\leq \eps^2 K, N_1 \in I_\eps^{(K,1)}}\P_{\phi_1(N_0,N_1)}\Big( T_0^{(K,0)} \vee T_0^{(K,2)}<\tilde{T}_\eps^{(K,1)} \Big)
 = 1-\frac{S_{21}}{\beta_2}+O_K(\eps),
 \quad \forall \alpha < \infty, $$
\item[$\bullet$] If $S_{01}>0$, $S_{201}>0$, 
 $$ \inf_{\|(N_0,N_1)/K-(\bar{n}_{01}^{(0)},\bar{n}_{01}^{(1)})\|\leq \eps}
 \P_{\phi_1(N_0,N_1)}\Big( T_0^{(K,2)}<\tilde{T}_\eps^{(K,01)} \Big)= 1-\frac{S_{201}}{\beta_2}+O_K(\eps),\quad \forall \alpha < \infty. $$
\end{enumerate}
\end{lem}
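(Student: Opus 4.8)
The plan is to reproduce, in the three-dimensional setting, the coupling scheme of the proof of Lemma~\ref{lemphase11}, exactly as announced for Lemma~\ref{lemmephase2n+1}. The starting point is the Poisson representation \eqref{defN}, in which the single type-$2$ individual is now seeded at the relative time $(\alpha-1/S_{10})\log K$. Throughout the third phase the subdominant populations — the freshly appeared type $2$, together with the residual type $0$ when $S_{01}<0$ — stay below $\lfloor \eps K\rfloor$, so their contribution to the death rate of the dominant block is $O(\eps)$. I would therefore first show that the dominant configuration (the monomorphic type-$1$ population when $S_{01}<0$, or the coexisting $0/1$ pair when $S_{01}>0$) remains trapped in $I_\eps^{(K,1)}$, respectively $I_\eps^{(K,01)}$, up to a time at least $e^{VK}$ with probability $1+O_K(\eps)$, uniformly over the admissible initial conditions $(N_0,N_1)$. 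This follows from \eqref{eq2lemmeA} after bracketing the exact death rates between rates perturbed by an $O(\eps)$ term accounting for the subdominant individuals, in the spirit of \eqref{defsi0-}--\eqref{defsi0+}; the uniform infimum over $(N_0,N_1)$ in the statement is absorbed here.

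On the event that the dominant block stays in its equilibrium window, I would then squeeze each subdominant population between two birth-and-death processes built from the same Poisson measure, as in \eqref{couplage12}. Born in an environment where type $1$ sits near $\bar{n}_1$ (respectively where $0$ and $1$ sit near their coexistence equilibrium), the type-$2$ population is bracketed by supercritical birth-and-death processes with individual birth rate $\beta_2$ and death rate close to $\beta_2-S_{21}$ (respectively $\beta_2-S_{201}$). The estimates \eqref{hitting_times}, \eqref{ext_times} and \eqref{equi_hitting} then give that type $2$ reaches $\lfloor \eps K\rfloor$ with probability $S_{21}/\beta_2+O_K(\eps)$ (respectively $S_{201}/\beta_2+O_K(\eps)$) and dies out otherwise, producing the three successful branches and their two complements. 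When $S_{01}<0$, the residual type-$0$ population, starting near $\eps^2K$, is comparable to a subcritical birth-and-death process with death rate $\beta_0+|S_{01}|$, and so hits $0$ at relative time $\log K/|S_{01}|$ up to an $O_K(\eps\log K)$ fluctuation.

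The decisive point is the race between the two subdominant clocks. Conditionally on survival, type $2$ needs an extra time $\log K/S_{21}$ to climb from $1$ to $\lfloor \eps K\rfloor$, hence reaches the threshold at relative time $(\alpha-1/S_{10}+1/S_{21})\log K$ up to $O_K(\eps\log K)$, whereas type $0$ vanishes at $\log K/|S_{01}|$ up to $O_K(\eps\log K)$. Comparing these leading orders yields exactly the dichotomy $1/|S_{01}|\lessgtr \alpha-1/S_{10}+1/S_{21}$ that separates the first and second items, the strictness of the hypotheses making the $O(\eps\log K)$ fluctuations harmless for $\eps$ small. In the second item, evaluating the exponential subcritical decay of type $0$ at the stopping time $T_{\eps K}^{(K,2)}$ produces the window $[K^{1-|S_{01}|(\alpha-1/S_{10}+1/S_{21}+\eps)},K^{1-|S_{01}|(\alpha-1/S_{10}+1/S_{21}-\eps)}]$, the $\pm\eps$ in the exponent stemming from the fluctuation of $T_{\eps K}^{(K,2)}$; this is the three-dimensional analogue of step (c) of Lemma~\ref{lemphase11}.

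The main obstacle I anticipate is maintaining the three couplings simultaneously and uniformly: the equilibrium-tracking bracket for the dominant block and the two birth-and-death brackets are valid only while the minorities stay below $\lfloor \eps K\rfloor$ and the dominant block stays in its window. I would therefore intersect these good events and verify, as in the derivation of \eqref{Ttildeinf2eps0}, that the exit from the equilibrium window cannot precede the relevant hitting or extinction times, using that such an exit occurs only after $e^{VK}$ while all hitting and extinction times above are of order $\log K$. Once the almost-sure domination is secured on an event of probability $1+O_K(\eps)$, the invasion probabilities and their complements follow from the scalar birth-and-death results, and only the timing bookkeeping involving $\alpha-1/S_{10}+1/S_{21}$ is genuinely case-specific.
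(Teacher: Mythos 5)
Your proposal is correct and follows essentially the same route as the paper, which itself only remarks that the proof is "very similar to the proof of Lemma \ref{lemphase11}": couple the dominant block with its equilibrium via \eqref{eq2lemmeA}, bracket the subdominant type-$0$ and type-$2$ populations between birth-and-death processes built from the Poisson representation, and apply \eqref{hitting_times}--\eqref{equi_hitting} to get the survival probabilities and the race of clocks yielding the dichotomy $1/|S_{01}|\lessgtr \alpha-1/S_{10}+1/S_{21}$. The timing bookkeeping and the exponent window for $N_0^K(T_{\eps K}^{(K,2)})$ in the second item are exactly as the paper intends.
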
}

Once again the proof is very similar to the proof of Lemma \ref{lemphase11}, and we can also derive approximations for the total duration 
of the three third phases:

\begin{lem}\label{lemdureephase2n+1ass2}
  Under Assumptions \ref{asscondinitiale} and \ref{ass3}, there exists a finite constant 
$c$ such that,
\begin{enumerate}
 \item[$\bullet$] If $S_{01}<0$, $S_{21}>0$ and ${1}/{|S_{01}|}<\alpha-{1}/{S_{10}}+1/{S_{21}}$,
$$\P\Big((1-c\eps)\Big(\frac{1}{S_{10}}+\frac{1}{|{S}_{01}|}\Big)\log K<T^{(K,0)}_{0}<(1+c\eps)
\Big(\frac{1}{S_{10}}+\frac{1}{|{S}_{01}|}\Big)\log K\Big|T_0^{(K,0)}<\tilde{T}_\eps^{(K,1)} \wedge  T_\eps^{(K,2)} \Big)= 1+O_K(\eps).$$
 \item[$\bullet$] If $S_{01}<0$, $S_{21}>0$ and ${1}/{|S_{01}|}>\alpha-{1}/{S_{10}}+1/{S_{21}}$,
$$\P\Big((1-c\eps)\Big(\alpha+\frac{1}{{S}_{21}}\Big)\log K<T^{(K,2)}_{\eps K}<(1+c\eps)
\Big(\alpha+\frac{1}{{S}_{21}}\Big)\log K\Big| T_{\eps K}^{(K,2)}<\tilde{T}_\eps^{(K,1)} \wedge T_0^{(K,0)} \Big)= 1+O_K(\eps).$$
 \item[$\bullet$] If $S_{01}>0$, $S_{201}>0$, 
$$\P\Big((1-c\eps)\Big(\alpha+\frac{1}{{S}_{201}}\Big)\log K<T^{(K,2)}_{\eps K}<(1+c\eps)
\Big(\alpha+\frac{1}{{S}_{201}}\Big)\log K\Big| T_{\eps K}^{(K,2)}<\tilde{T}_\eps^{(K,01)} \Big)=1+O_K(\eps). $$
\end{enumerate}
\end{lem}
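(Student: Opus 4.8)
The plan is to reproduce the coupling strategy of the proof of Lemma \ref{lemphase11}, now combined with the first-phase duration of Lemma \ref{lemphase13} and the birth-and-death asymptotics recalled in Appendix \ref{knownresults}. The total duration splits into three contributions: the first phase, during which the type-$1$ population climbs from a single individual to $\lfloor \eps K\rfloor$; the deterministic second phase, whose length $t_\eps(0,1)$ is $O(1)$ and hence negligible on the $\log K$ scale; and the third phase, whose length is dictated by a single sub- or supercritical birth-and-death process. By Lemma \ref{lemphase13}, on the event $\{T^{(K,1)}_{\eps K}<\tilde T^{(K,0)}_\eps\}$ the first phase lasts $(1+O_K(\eps))\log K/S_{10}$, so it only remains to evaluate the third-phase contribution in each case and to splice the two estimates together by the strong Markov property applied at the end of the second phase.

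In each case I would single out the one population whose hitting (or extinction) time is the stopping time appearing in the statement, and sandwich it between two birth-and-death processes $N^{(\eps,\pm)}$ built from the same Poisson measures as in the coupling \eqref{couplage12}, with per-individual rates obtained by freezing the competition exerted by the populations that remain in an $\eps$-neighbourhood of their equilibrium. In the first case ($S_{01}<0$, $S_{21}>0$, $1/|S_{01}|<\alpha-1/S_{10}+1/S_{21}$) the type-$0$ population enters the third phase at a size of order $\eps^2 K$ and, since $S_{01}<0$, is squeezed between two subcritical processes with death-minus-birth rate close to $|S_{01}|$; equation \eqref{ext_times} then yields an extinction time $(1+O_K(\eps))\log K/|S_{01}|$, whence $T^{(K,0)}_0=(1+O_K(\eps))(1/S_{10}+1/|S_{01}|)\log K$. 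In the second case the reversed inequality forces the supercritical type-$2$ population---appearing at time $\alpha\log K$ and growing at rate close to $S_{21}$ in the type-$1$ background, the declining type-$0$ population contributing only a vanishing power of $K$ to its death rate---to reach $\lfloor\eps K\rfloor$ first; applying \eqref{equi_hitting} to its coupling processes on the survival event gives $T^{(K,2)}_{\eps K}=(1+O_K(\eps))(\alpha+1/S_{21})\log K$. The third case ($S_{01}>0$, $S_{201}>0$) is identical but with the coexistence equilibrium $(\bar n_{01}^{(0)},\bar n_{01}^{(1)})$ as background, so the type-$2$ growth rate is close to $S_{201}$ and \eqref{equi_hitting} delivers $T^{(K,2)}_{\eps K}=(1+O_K(\eps))(\alpha+1/S_{201})\log K$. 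Each of these is essentially the $\gamma=0$ instance of Lemma \ref{lemdureephase2n+1ass}, shifted by the appearance time $\alpha\log K$.

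It remains to check that the couplings stay valid throughout the third phase, whose length is itself of order $\log K$. This demands that the background population(s)---type $1$ near $\bar n_1$ in the first two cases, the pair $(\bar n_{01}^{(0)},\bar n_{01}^{(1)})$ in the third---remain in their $\eps$-neighbourhood for a time far exceeding $\log K$, which is precisely what Lemma \ref{lemmephase3} and the underlying estimate \eqref{eq2lemmeA} supply, the exit times $\tilde T^{(K,1)}_\eps$ and $\tilde T^{(K,01)}_\eps$ surpassing $e^{VK}$ with probability tending to one. Conditioning on the relevant events of Lemma \ref{lemmephase3} keeps the frozen rates within $O(\eps)$ of their limits, and by the $s^{(\eps,\pm)}$-type bounds of \eqref{diffs+s-} the upper and lower birth-and-death durations then differ from the announced value only by a factor $1+O_K(\eps)$.

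I expect the main difficulty to be the bookkeeping required to chain the first and third phases while accommodating the appearance of type $2$ at the intermediate time $\alpha\log K$: one must invoke the strong Markov property at the end of the second phase, transport the concentration of the initial third-phase state provided by Lemmas \ref{lemmephase2} and \ref{lemmephase3}, and verify that the $O(1)$ length of the deterministic phase as well as the lower-order fluctuations of the birth-and-death hitting times are both swallowed by the $O_K(\eps)\log K$ error. Granting this, summing the phase durations and collecting the error terms concludes the argument; since it follows the proof of Lemma \ref{lemphase11} almost verbatim, no essentially new estimate is needed.
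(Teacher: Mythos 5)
Your proposal is correct and follows exactly the route the paper intends: the paper gives no separate argument for this lemma beyond the remark that it is ``very similar to the proof of Lemma \ref{lemphase11}'', and your decomposition into first phase (Lemma \ref{lemphase13}), an $O(1)$ deterministic phase, and a third phase controlled by birth-and-death couplings together with \eqref{ext_times}, \eqref{equi_hitting} and \eqref{eq2lemmeA} is precisely that argument spelled out. One small imprecision: in the second case the type-$0$ population contributes $O(\eps^2)$ (a constant, not a vanishing power of $K$) to the type-$2$ death rate at the start of the third phase, but this is absorbed into the $O_K(\eps)$ error exactly as in the $s^{(\eps,\pm)}$ bounds, so the conclusion is unaffected.
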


\section{Proofs of Propositions $1$ to $6$}\label{proofmainth}

We now prove the main results of this paper. The proofs are direct consequences of Lemmas stated in Sections \ref{firstphase}, 
\ref{secondphase} and \ref{thirdphase}. As the proofs of Propositions \ref{propslowdown} and \ref{propspeedup} 
are very similar, we only prove the second one.

\begin{proof}[Proof of Proposition \ref{propspeedup}]
 Let us first consider that Assumptions \ref{asscondinitiale} and \ref{ass1}, and Condition \eqref{cas1speed} are satisfied.\\

\noindent \underline{\textbf{$(1-S_{10}/\beta_1)S_{20}/\beta_2$:}} According to Lemma \ref{lemphase11}, with a probability close to 
$(1-S_{10}/\beta_1)S_{20}/\beta_2$ only the $2$-population survives 
the first phase and hits the size $\lfloor \eps K \rfloor$. Then it 'deterministically' 
competes with the $0$-population and outcompetes it as $S_{02}<0$.
In this case the duration of the invasion, 
$\log K/S_{20}$, is not modified by the presence of the mutant $1$.\\

\noindent \underline{\textbf{$S_{10}S_{20}/(\beta_1\beta_2)$:}} According to Lemma \ref{lemphase11} and \ref{lemdureephase1ass1}, 
with a probability close to 
$S_{10}S_{20}/(\beta_1\beta_2)$ both $1$- and $2$-populations 
survive the first phase, which has a duration close to 
$$ \frac{\log K}{S_{10}}.$$
The $1$-population size is the first to hit $\lfloor \eps K \rfloor$, 
whereas the $2$-population size belongs to $J_{M_1\eps}^{(K,2)}$ (defined in \eqref{defJepsK}) at the end of the first phase.
Lemma \ref{lemmephase2} and Markov Property imply that with a probability close to one,
 the $2$-population size stays of the same order during the second phase, and the 
$0$- and $1$-population sizes become close to $(aK,\bar{n}_1K)$, with $0<a<\eps^2$ depending on ecological parameters but not on $K$, 
at the end of the second phase. As $S_{21}>0$, the $2$-population size grows during the third phase
and takes a time close to 
$$\frac{1}{S_{21}}\Big(1-S_{20}\Big(\frac{1}{S_{10}}-\alpha\Big)\Big)\log K$$
to hit the size $\lfloor \eps K \rfloor$
(Lemma \ref{lemdureephase2n+1ass}). Furthermore at the end of the third phase the $0$-population has a size negligible with respect to $K$ 
(which can be $0$, see Lemma \ref{lemmephase2n+1}). During the fourth phase the $2$-population 'deterministically' 
outcompetes the $1$-population, as $S_{12}<0<S_{21}$ (Lemma \ref{lemmephase2n2dim}). 
Then the $0$- and $1$-populations get extinct, as $S_{02}$ and $S_{12}<0$ (Lemma \ref{lemmephase2n+1}).
Such a trajectory is illustrated in the second simulation of Figure \ref{graphprop22}.
In this case the total duration of the mutant invasion is 
$$ \Big(\frac{1}{S_{10}}+\frac{1}{S_{21}}\Big(1-S_{20}\Big(\frac{1}{S_{10}}-\alpha\Big)\Big)-\alpha\Big)\log K , $$
(the $-\alpha \log K$ is due to the time of the second mutation occurrence)
which has to be compared with the duration of the invasion in the absence of the first mutation, $\log K/S_{20}$.
The resolution of a quadratic equation leads to the condition $S_{20}\in ]S_{10}/(1-\alpha S_{10}),S_{21}[$. More precisely, the inequality 
$$\Big(\frac{1}{S_{10}}+\frac{1}{S_{21}}\Big(1-S_{20}\Big(\frac{1}{S_{10}}-\alpha\Big)\Big)-\alpha\Big)\log K< \frac{\log K}{S_{20}}$$
is equivalent to 
$$ S_{20}^2\frac{1}{S_{21}}\Big( \frac{1}{S_{10}}-\alpha\Big)-S_{20}\Big( \frac{1}{S_{10}}+\frac{1}{S_{21}}-\alpha\Big)+1>0, $$
whose solutions are $S_{20}=S_{21}$ and $S_{20}=S_{10}/(1-\alpha S_{10})$.
As under Assumptions \ref{asscondinitiale} and \ref{ass1}, 
 $$ \frac{1}{S_{20}}> \frac{1}{S_{10}} -\alpha  \Longleftrightarrow S_{20}< \frac{S_{10}}{1-\alpha S_{10}}, $$
 this ends the proof of this case.\\

\noindent \underline{\textbf{$1-S_{20}/\beta_2$:}} Finally with a probability close to $1-S_{20}/\beta_2$ the $2$-population gets extinct during the first phase 
(Lemma \ref{lemphase11}).\\

The proof of the second case follows the same ideas. The only difference is that when the $1$-population survives the first phase, 
the $0$- and $1$-populations 
coexist during the third phase, as $S_{10}$ and $S_{01}>0$. This ends the proof of Proposition \ref{propspeedup}.
\end{proof}

 Again the proofs of Propositions \ref{propdecreaseinvproba} and \ref{propincreaseinvproba} 
are very similar, and we only prove the second one.

\begin{proof}[Proof of Proposition \ref{propincreaseinvproba}]
Let us first suppose that Assumptions \ref{asscondinitiale} and \ref{ass3} hold, and that $S_{20}<0$, $S_{01}>0$, $S_{201}>0$. \\

\noindent \underline{\textbf{$1-S_{10}/\beta_1$:}} According to Lemma \ref{lemphase13}, with a probability close to $1-S_{10}/\beta_1$ the 
mutants $1$ do not survive 
the first phase. 
Then following \eqref{eq2lemmeA} we get that the $0$-population size stays close to its 
equilibrium $\bar{n}_0 K$ during a time of order $e^{VK}$ where $V$ is a positive constant independent of $K$. Hence it is still 
close to this value when the second mutant appears (time $\alpha \log K$).
The latter one has a negligible probability to survive as the invasion fitness $S_{20}$ is negative.
After the $2$-population extinction, 
using again \eqref{eq2lemmeA}, we get that
the $0$-population size stays close to its equilibrium value $\bar{n}_0K$ during a time 
larger than $e^{VK}$ where $V$ is a positive constant independent of $K$. \\

\noindent \underline{\textbf{$S_{10}/\beta_1(1-S_{201}/\beta_2)$:}} According to Lemma \ref{lemphase13}, with a probability close to $S_{10}/\beta_1$ the 
mutants $1$ survive 
the first phase, and the $0$- and $1$-population sizes get close to their coexisting equilibrium 
$(\lfloor\bar{n}_{01}^{(0)}K\rfloor,\lfloor\bar{n}_{01}^{(1)}K\rfloor)$ during the second phase. 
Then we get from \eqref{eq2lemmeA} that the $0$- and $1$-population sizes stay close to this 
coexisting equilibrium 
 during a time larger than $e^{VK}$ for large $K$ where $V$ is a positive constant independent of $K$. Hence they are still close to 
 this state 
 when the second mutant appears (time $\alpha \log K$). 
 The latter one has a probability close to $1-S_{201}/\beta_2$ to get extinct before hitting  $\lfloor \eps K\rfloor$ 
 (Lemma \ref{lemmephase3}).
 After the $2$-population extinction, the $0$- and $1$-population sizes stay close to their equilibrium value 
 $\lfloor \bar{n}_{01}^{(0)}K\rfloor$ and $\lfloor\bar{n}_{01}^{(1)}K\rfloor$ during a time 
larger than $e^{VK}$ where $V$ is a positive constant independent of $K$ (Equation \eqref{eq2lemmeA}).\\
 
\noindent \underline{\textbf{$S_{10}S_{201}/(\beta_1\beta_2)$:}} Applying again Lemma \ref{lemphase13} and Equation \eqref{eq2lemmeA}, 
we get that with a probability close to $S_{10}/\beta_1$ the $0$- and $1$-population sizes get close to $(\bar{n}_{01}^{(0)}K,\bar{n}_{01}^{(1)}K)$
 and are still in this configuration when the second mutant occurs.  
 The latter one has a probability close to $S_{201}/\beta_2$ to hit a size of order 
 $K$ (Lemma \ref{lemmephase3}) whereas $(N_0,N_1)$ still belongs to $I_\eps^{(K,01)}$. 
 Then the final state depends on the signs of the other invasion fitnesses. The approximating deterministic Lotka-Volterra system after the third 
 phase can belong to 7-12, 29, 31 or 33ird class described by Zeeman (see Figures \ref{classes1to33} and \ref{tableass2} case  
%  $\tikz \node[draw,circle,minimum size=.02cm, node distance=10.75cm]{B};$
 \ovalbox{B}).
 In all the cases the density of $2$-type individuals does not tend to $0$ during a time 
larger than $e^{VK}$ where $V$ is a positive constant independent of $K$.\\

 The proof of the second case follows the same ideas. We just have to take into account the value of $\alpha \log K$ 
 compared to some invasion time 
 to know whether the $0$-population gets extinct before the hitting of $\lfloor \eps K\rfloor$ by the $2$-population. We detail this kind of 
comparison in the proof of Proposition \ref{prolizards} and end here the proof of Proposition \ref{propdecreaseinvproba}.
\end{proof}

Let us conclude this section with the proof of Proposition \ref{prolizards}. The proof of Proposition \ref{proannulationeffet} 
follows the same outline and 
is simpler; hence we leave it to the reader.

\begin{proof}[Proof of Proposition \ref{prolizards}]
 Suppose that Assumptions \ref{asscondinitiale}, \ref{ass3} and Conditions \eqref{cond1RPS} and \eqref{cond2RPS} hold.\\

 \noindent \underline{\textbf{$1-S_{10}/\beta_1$:}} See the beginning of the proof of Proposition \ref{propdecreaseinvproba}. \\
 
 \noindent \underline{\textbf{$(S_{10}/\beta_1)(1-S_{21}/\beta_2)$:}} Lemma \ref{lemphase13} and Equation \eqref{eq1lemmeA}
 imply that with a probability close to $S_{10}/\beta_1$, 
 the $1$-population survives the first phase and the population state at the end of the second phase satisfies 
$ (N_0,N_1)(\phi_1^K)=(\lfloor n_0K\rfloor, \lfloor n_1 K \rfloor) $ with 
$$\inf_{z \in \mathcal{B}_\eps^K} \frac{n_0^{(z)}(t_\eps(0,1))}{2}<n_0<\eps^2\quad \text{and} \quad|n_1-\bar{n}_1|\leq \eps,$$
where $t_\eps(0,1)$ has been defined in \eqref{defteps}.
Then with a probability close to $1-S_{21}/\beta_2$ the $2$-population does not survive, and the $0$-population size, which can be compared 
to a subcritical birth and death process as $S_{01}<0$, also hits $0$ while the $1$-population size is still close to 
$\lfloor\bar{n}_1K \rfloor$ (Lemma \ref{lemmephase3}).
In this case, the $1$-population size stays close to its equilibrium value $\lfloor\bar{n}_1K\rfloor$ during a time 
larger than $e^{VK}$ where $V$ is a positive constant independent of $K$ (Equation \eqref{eq2lemmeA}).\\

 \noindent \underline{\textbf{$S_{10}S_{21}/(\beta_1\beta_2)$:}} {First applying again Lemma \ref{lemphase13} 
 and Equation \eqref{eq1lemmeA} we get that w}ith a probability close to $S_{10}/\beta_1$, 
 the $1$-population survives the first phase and the population state at the end of the second phase satisfies 
$ (N_0,N_1)(\phi_1^K)=(\lfloor n_0K\rfloor, \lfloor n_1 K \rfloor) $ with 
$$\inf_{z \in \mathcal{B}_\eps^K} \frac{n_0^{(z)}(t_\eps(0,1))}{2}<n_0<\eps^2\quad  \text{and} \quad |n_1-\bar{n}_1|\leq \eps.$$
Then Lemma \ref{lemmephase3} implies that with a probability close to $S_{21}/\beta_2$, the $2$-population size hits the 
value $\lfloor \eps K \rfloor $ before the extinction of the $0$-population, because 
$$\Big(\frac{1}{S_{10}}+\frac{1}{|S_{01}|}\Big)\log K,$$
which is the approximate 
extinction time of the 
$0$-population (Lemma \ref{lemdureephase2n+1ass2}), is bigger than 
$$\Big(\alpha + \frac{1}{S_{21}}\Big)\log K,$$
which is the approximate hitting time of $\lfloor \eps K \rfloor$ 
by the $2$-population size (again Lemma \ref{lemdureephase2n+1ass2}). 
Combining Lemmas \ref{lemphase13}, \ref{lemmephase3} and \ref{lemdureephase2n+1ass2} we 
even get that at the end of the third phase, the $0$-population 
size is of order 
$$K^{1-|S_{01}|(\alpha - \frac{1}{S_{10}}+\frac{1}{S_{21}})},$$
and that the duration of the third phase is 
\begin{equation}\label{dureethird} \Big( \alpha - \frac{1}{S_{10}}+\frac{1}{S_{21}} \Big)\log K. \end{equation}
During the fourth phase we can approximate the dynamics of the $1$- and $2$-populations, which have a size of order $K$, by 
the system \eqref{S} with $(i,j)=(1,2)$
(Lemma \ref{lemmephase2n2dim}). As $S_{12}<0<S_{21}$,
the $1$- and $2$-population states at the end of the fourth phase are 
$ (\lfloor n_1K\rfloor, \lfloor n_2 K \rfloor) $ with 
$$\inf_{z \in \mathcal{B}_\eps^K} \frac{n_1^{(z)}(t_\eps(1,2))}{2}<n_1<\eps^2\quad  \text{and} \quad |n_2-\bar{n}_2|\leq \eps.$$
During the fifth phase, the $0$-population size has an evolution comparable to this of a supercritical birth and death process 
($S_{02}>0$) and takes a time 
\begin{equation} \label{time1} \frac{|S_{01}|}{S_{02}}\Big(\alpha -\frac{1}{S_{10}}+\frac{1}{S_{21}} \Big)\log K \end{equation}
to hit $\lfloor \eps K\rfloor$ (Lemmas \ref{lemmephase2n+1} and \ref{lemdureephase2n+1ass}). The $1$-population size has an evolution comparable to this of a subcritical birth and death process 
($S_{12}<0$) and takes a time
\begin{equation}\label{time2} \frac{\log K}{|S_{12}|} \end{equation}
to get extinct. As according to Condition \eqref{cond2RPS}
$$  \frac{|S_{01}|}{S_{02}}\Big(\alpha -\frac{1}{S_{10}}+\frac{1}{S_{21}} \Big)<\frac{1}{|S_{12}|}, $$
\eqref{time1} and \eqref{time2} imply that the $0$-population size hits $\lfloor \eps K \rfloor$ before the extinction 
of the $1$-population, the fifth phase has a duration 
of order 
\begin{equation}\label{dureefifth} \frac{|S_{01}|}{S_{02}}\Big(\alpha -\frac{1}{S_{10}}+\frac{1}{S_{21}} \Big)\log K, \end{equation}
and at the end of the fifth phase, the $1$-population size is of order
$$ K^{1-\frac{|S_{12}||S_{01}|}{S_{02}}(\alpha - \frac{1}{S_{10}}+\frac{1}{S_{21}})} .$$
We then again apply the same reasoning: during the sixth phase, the $0$-population outcompetes the $2$-population. 
During the seventh phase, which has a duration of order
\begin{equation}\label{dureeseven} \frac{|S_{12}||S_{01}|}{S_{10}S_{02}}\Big(\alpha -\frac{1}{S_{10}}+\frac{1}{S_{21}} \Big)\log K, \end{equation}
the $1$-population size hits the value $\lfloor \eps K \rfloor$ and the $2$-population sizes ends at a value of order
$$ K^{1-\frac{|S_{20}||S_{12}||S_{01}|}{S_{10}S_{02}}(\alpha - \frac{1}{S_{10}}+\frac{1}{S_{21}})} ,$$
where we used Condition \eqref{cond2RPS} which implies that
$$ \frac{|S_{12}||S_{01}|}{S_{10}S_{02}}\Big(\alpha -\frac{1}{S_{10}}+\frac{1}{S_{21}} \Big)<\frac{1}{|S_{20}|}. $$
Adding the durations of the third, fifth and seventh phases in \eqref{dureethird}, \eqref{dureefifth} and \eqref{dureeseven}, we get the 
duration of the first cycle,
$$ \Big(\alpha -\frac{1}{S_{10}}+\frac{1}{S_{21}} \Big) \Big(1+\frac{|S_{01}|}{S_{02}} +\frac{|S_{01}||S_{12}|}{S_{02}S_{10}} \Big)
\log K .$$
Then by induction we prove that at the end of the phase $3+l$, $l \in \N$, the $0$-population size is of order
$$K^{1-|S_{01}|(\alpha - \frac{1}{S_{10}}+\frac{1}{S_{21}})(\frac{|S_{12}||S_{01}||S_{20}|}{S_{10}S_{02}S_{21}})^l},$$
at the end of the phase $5+l$, $l \in \N$, the $1$-population size is of order
$$K^{1-\frac{|S_{12}||S_{01}|}{S_{02}}(\alpha - \frac{1}{S_{10}}+\frac{1}{S_{21}})(\frac{|S_{12}||S_{01}||S_{20}|}{S_{10}S_{02}S_{21}})^l},$$
and at the end of the phase $7+l$, $l \in \N$, the $2$-population size is of order
$$K^{1-\frac{|S_{20}||S_{12}||S_{01}|}{S_{10}S_{02}}(\alpha - \frac{1}{S_{10}}+\frac{1}{S_{21}})(\frac{|S_{12}||S_{01}||S_{20}|}{S_{10}S_{02}S_{21}})^l}.$$
We also prove that the $3+l$th phase, $l \in \N$, has a duration close to
\begin{equation*} \Big(\frac{|S_{12}||S_{01}||S_{20}|}{S_{10}S_{02}S_{21}}\Big)^l\Big( \alpha - \frac{1}{S_{10}}+\frac{1}{S_{21}} \Big)\log K, \end{equation*}
the $5+l$th phase, $l \in \N$, has a duration close to
\begin{equation*} \Big(\frac{|S_{12}||S_{01}||S_{20}|}{S_{10}S_{02}S_{21}}\Big)^l\frac{|S_{01}|}{S_{02}}\Big(\alpha -\frac{1}{S_{10}}+\frac{1}{S_{21}} \Big)\log K, \end{equation*}
and the $7+l$th phase, $l \in \N$, has a duration close to
\begin{equation*} \Big(\frac{|S_{12}||S_{01}||S_{20}|}{S_{10}S_{02}S_{21}}\Big)^l \frac{|S_{12}||S_{01}|}{S_{10}S_{02}}\Big(\alpha -\frac{1}{S_{10}}+\frac{1}{S_{21}} \Big)\log K. \end{equation*}
This completes the proof of Proposition \ref{prolizards}.
\end{proof}

\appendix

\section{Technical results}\label{knownresults}

This section is dedicated to technical results needed in the proofs.
We first recall some facts about birth and death processes. They are used in Sections \ref{firstphase} to 
\ref{thirdphase} and can be found in \cite{MR2047480}:

\begin{lem}\label{lembdprocess}
Let $Z=(Z_t)_{t \geq 0}$ be a birth and death process with individual birth and death rates $b$ and $d $. For $a \in \R_+$, 
$T_a=\inf\{ t\geq 0, Z_t=\lfloor a \rfloor \}$ and $\P_i$ (resp. $\E_i$) is the law (resp. expectation) of $Z$ when $Z_0=i \in \N$. Then 
\begin{enumerate} 
  \item[$\bullet$] For $(i,j,k) \in \Z_+^3$ such that $j \in (i,k)$,
 \begin{equation} \label{hitting_times} \P_j(T_k<T_i)=\frac{1-(d/b)^{j-i}}{1-(d/b)^{k-i}} .\end{equation}
 \item[$\bullet$] If $0<d \neq b$, for every $i\in \Z_+$ and $t \geq 0$,
\begin{equation} \label{ext_times} \P_{i}(T_0\leq t )= \Big( \frac{d(1-e^{(d-b)t})}{b-de^{(d-b)t}} \Big)^{i}.\end{equation}
 \item[$\bullet$] If $0<d<b$, on the non-extinction event of $Z$, which has a probability $1-(d/b)^{Z_0}$, the following convergence holds:
\begin{equation} \label{equi_hitting}  T_N/\log N \underset{N \to \infty}{\to} (b-d)^{-1},\quad a.s.  \end{equation}
\end{enumerate}
\end{lem}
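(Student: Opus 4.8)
The plan is to treat the three assertions separately, each being a classical property of the linear birth and death process whose transition rates from state $n$ are $bn$ (up) and $dn$ (down); I will recover the stated formulas by elementary means, the statements themselves being recorded in \cite{MR2047480}.

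For \eqref{hitting_times} I would pass to the embedded jump chain. Away from $0$ the process jumps up with probability $b/(b+d)$ and down with probability $d/(b+d)$, independently of the current state, so the embedded chain is the simple biased random walk with these step probabilities; since $\P_j(T_k<T_i)$ depends only on the ordered sequence of visited states it is unaffected by the time change. Writing $h(n):=\P_n(T_k<T_i)$ for $i\le n\le k$, a one-step decomposition gives the recursion $h(n)=\frac{b}{b+d}h(n+1)+\frac{d}{b+d}h(n-1)$ with boundary values $h(i)=0$ and $h(k)=1$. This is a linear second order recurrence whose characteristic roots are $1$ and $d/b$, so $h(n)=A+B(d/b)^n$; imposing the boundary conditions and evaluating at $n=j$ yields $h(j)=\frac{1-(d/b)^{j-i}}{1-(d/b)^{k-i}}$, which is the classical gambler's ruin formula.

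For \eqref{ext_times}, since $0$ is absorbing we have $\P_i(T_0\le t)=\P_i(Z_t=0)$, and because distinct initial individuals generate independent subtrees (the branching property) this factorizes as $\P_1(Z_t=0)^i$. It therefore suffices to compute $g(t):=\P_1(Z_t=0)$. Setting $F(s,t):=\E_1[s^{Z_t}]$, the backward Kolmogorov equation for this Markov branching process reads $\partial_t F=u(F)$ with infinitesimal generating function $u(s)=(bs-d)(s-1)$ and initial datum $F(s,0)=s$. Evaluating at $s=0$ gives the scalar equation $g'(t)=(bg(t)-d)(g(t)-1)$ with $g(0)=0$; separating variables and integrating by partial fractions (the denominator factors at $g=d/b$ and $g=1$) produces $g(t)=\frac{d(1-e^{(d-b)t})}{b-d e^{(d-b)t}}$, and raising to the power $i$ gives \eqref{ext_times}. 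Letting $t\to\infty$ here, when $0<d<b$, also yields the extinction probability $(d/b)^i$ and hence the non-extinction probability $1-(d/b)^{Z_0}$ quoted in the third item.

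For \eqref{equi_hitting} I would work with the nonnegative martingale $M_t:=e^{-(b-d)t}Z_t$. A direct computation of $\E_i[Z_t^2]$ shows $\sup_t\E[M_t^2]<\infty$, so $M_t$ converges almost surely and in $L^2$ to a limit $W\ge 0$; on the event $\{W>0\}$ one has $Z_t\sim We^{(b-d)t}$, whence $\log Z_t=(b-d)t+\log W+o(1)$ and inverting this relation along the (almost surely divergent) trajectory gives $T_N/\log N\to(b-d)^{-1}$. The one delicate point is the identification of $\{W>0\}$ with the non-extinction event up to a null set: the inclusion $\{W>0\}\subset\{\text{non-extinction}\}$ is immediate, while the reverse requires ruling out degeneration of the limit on a non-extinction set. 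Here the $L^2$ (finite variance) bound is exactly the easy case of the Kesten--Stigum theorem, giving $\E[W]=Z_0>0$ and, via the branching property applied to $\P_1(W=0)$, that this quantity solves the same fixed-point equation as the extinction probability; this is the step I expect to be the main obstacle, everything else being routine recurrence and ODE computation.
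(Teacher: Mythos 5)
Your proof is correct; note however that the paper does not actually prove Lemma \ref{lembdprocess} at all --- it simply records these three facts as classical and refers to \cite{MR2047480}. Your write-up is therefore a self-contained reconstruction rather than an alternative to an argument in the text. The three ingredients you use are the standard ones and each is carried out correctly: the gambler's ruin recursion for the embedded biased walk gives \eqref{hitting_times} (the time change is irrelevant to the ordering of hitting times, as you say); the backward equation $g'=(bg-d)(g-1)$, $g(0)=0$, integrates by partial fractions exactly to the expression in \eqref{ext_times}, and its $t\to\infty$ limit recovers the extinction probability $(d/b)^{Z_0}$; and for \eqref{equi_hitting} the $L^2$-bounded martingale $e^{-(b-d)t}Z_t$ together with the fixed-point identification of $\P_1(W=0)$ with the extinction probability is precisely the easy Kesten--Stigum case, which is legitimate here because the offspring law of a linear birth--death process has finite variance. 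One small point you use implicitly and could make explicit: since the jumps are $\pm1$, on non-extinction the process visits every integer level below its supremum, so $T_N<\infty$ for all $N$ on that event and $Z_{T_N}=\lfloor N\rfloor$ exactly, which is what lets you invert $\log Z_t=(b-d)t+\log W+o(1)$ along the sequence $T_N$. With that remark the argument is complete.
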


We also need large deviation results to quantify the difference between the rescaled population process $N^K/K$ and the approximating 
deterministic Lotka-Volterra processes \eqref{S} and \eqref{S2}. The following statements can be found in \cite{champagnat2006microscopic} Theorem 3 (b) and (c) and 
in \cite{champagnat2011polymorphic} Proposition A.2. They follow from 
Dupuis and Ellis \cite{dupuisweak} (Theorem 10.2.6 in Chapter 10):

\begin{lem}
Let $C$ be a compact of $(\R_+^*)^2\times \{0\} $ or $(\R_+^*)^3$, and $T$ a finite positive constant. Then for every positive $\delta$,
\begin{equation}\label{eq1lemmeA}
  \lim_{K \to \infty} \sup_{z \in C} 
\P\Big(\sup_{ t \leq T}\|N^{K}(t)/K-n^{(z)}(t) \|>\delta \Big|N_0^K(0)=\lfloor zK \rfloor\Big) =0,\end{equation}
where $\lfloor zK\rfloor = (\lfloor z_0K\rfloor,\lfloor z_1K\rfloor,\lfloor z_2K\rfloor) $, and $n^{(z)}$ is the solution of 
\eqref{S2} with initial condition $z$.

Let $\bar{n}$ denote a stable equilibrium of a competitive Lotka-Volterra system in dimension one, two or 
three, with all coordinates positive. Let $\eps>0$ and $N^K$ denote the population process with the same ecological 
parameters as the considered Lotka-Volterra system and carrying capacity $K$. Then there exists a positive constant $V$ such that 
\begin{equation}\label{eq2lemmeA}
 \lim_{K \to \infty}\P\Big(\sup_{t \leq e^{VK}}
\Big\| \frac{N^K(t)}{K}-\bar{n} \Big\|\leq 2\eps \Big| \ \Big\| \frac{N^K(0)}{K}-\bar{n} \Big\|\leq \eps \Big)=1. \end{equation}
\end{lem}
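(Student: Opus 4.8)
The plan is to treat the two displays separately, both by viewing $N^K/K$ as a density-dependent jump process. Statement \eqref{eq1lemmeA} is a law-of-large-numbers (fluid limit) on a bounded horizon, while \eqref{eq2lemmeA} is a metastability estimate: confinement near a stable equilibrium for an exponentially long time. The natural tools are the semimartingale decomposition coming from the Poisson representation \eqref{defN} together with the sample-path large-deviation machinery of Dupuis and Ellis, which is exactly what the cited references invoke.

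For \eqref{eq1lemmeA}, I would first rescale the representation \eqref{defN} to write, for each coordinate $i$,
$$ \frac{N_i^K(t)}{K} = \frac{N_i^K(0)}{K} + \int_0^t F_i\!\left(\frac{N^K(s)}{K}\right) ds + M_i^K(t), $$
where $F$ is the vector field of the Lotka-Volterra system \eqref{S2} (the rescaled birth minus death rates) and $M_i^K$ is a martingale whose predictable quadratic variation is $O(1/K)$ on any event on which $N^K/K$ stays bounded. The first step is a confinement argument: because the competition (death) term is quadratic, one shows that with probability tending to $1$ the process does not leave a large compact set on $[0,T]$, so that $F$ may be taken Lipschitz there. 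On that event Doob's inequality gives $\E[\sup_{t\le T}|M_i^K(t)|^2]=O(1/K)$, and a Gronwall estimate applied to $\|N^K/K - n^{(z)}\|$ yields closeness to the ODE solution. The constants in these bounds are uniform in the initial condition over the compact $C$, which produces the supremum over $z\in C$.

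For \eqref{eq2lemmeA}, the argument is genuinely of large-deviation type. From the sample-path large deviation principle for $N^K/K$ (the analytic content borrowed from Dupuis and Ellis, Theorem 10.2.6), any trajectory starting in an $\eps$-neighborhood of $\bar{n}$ and reaching the boundary of the $2\eps$-neighborhood within a time of order $1$ must pay a strictly positive cost; positivity holds because $\bar{n}$ is a stable equilibrium of the competitive system, so the quasi-potential on the boundary of that neighborhood is bounded below by some $c>0$. Hence the probability of such an exit in bounded time is at most $e^{-cK}$ for $K$ large. To reach the horizon $e^{VK}$ I would then use the Markov property: partition $[0,e^{VK}]$ into $O(e^{VK})$ subintervals of bounded length, apply the one-step exit estimate on each, and conclude by a union bound that the probability of ever leaving the $2\eps$-tube is at most $e^{VK}e^{-cK}=e^{-(c-V)K}\to 0$, provided $V$ is chosen in $(0,c)$.

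The main obstacle in the first part is the unbounded (quadratic) death rate, which forces the preliminary confinement step before a Gronwall argument can be applied, and the uniformity over $C$ must be tracked through all the constants. In the second part the hard analytic input is the large deviation principle itself, which we take from the cited works; the remaining delicate point is selecting $V$ strictly below the barrier height $c$, so that the iteration over the exponentially many subintervals still closes.
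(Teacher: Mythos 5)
The paper gives no proof of this lemma at all: it is quoted from \cite{champagnat2006microscopic} (Theorem 3 (b) and (c)) and \cite{champagnat2011polymorphic} (Proposition A.2), whose arguments are exactly the two you sketch --- the Kurtz/Ethier--Kurtz fluid limit via the martingale decomposition, a confinement step to handle the quadratic death rate, Doob and Gronwall for \eqref{eq1lemmeA}, and a Freidlin--Wentzell (Dupuis--Ellis) exit-cost estimate iterated over exponentially many time blocks for \eqref{eq2lemmeA} --- so your proposal is correct and follows essentially the same route as the cited sources. One point to tighten in the second part: a union bound over the $O(e^{VK})$ blocks does not close if the one-step estimate is only available from the inner $\eps$-neighbourhood while the process may sit anywhere in the $2\eps$-neighbourhood at the start of a block, so the per-block event should be ``exit the $2\eps$-neighbourhood \emph{or} fail to return to the $\eps$-neighbourhood by the end of the block,'' both of probability at most $e^{-cK}$ because the deterministic flow draws the closed $2\eps$-neighbourhood back into the $\eps$-neighbourhood in bounded time when $\eps$ is small.
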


Let us now prove Lemma \ref{transivsnontransi} which precises the conditions needed to have transitive interactions 
between several types of individuals:

\begin{proof}[Proof of Lemma \ref{transivsnontransi}]
 Let $i,j,k$ be in $\mathcal{E}$ and recall notation \eqref{tildeC}. The relations
$$ i \prec j \quad \text{and} \quad j \prec k $$
 are equivalent to
 \begin{equation*}
  \frac{\rho_i}{\rho_j}- \tilde{C}_{ij}<0< \frac{\rho_j}{\rho_i}- \tilde{C}_{ji}\quad \text{and} \quad 
   \frac{\rho_j}{\rho_k}- \tilde{C}_{jk}<0< \frac{\rho_k}{\rho_j}- \tilde{C}_{kj},
 \end{equation*}
 or in other terms 
 \begin{equation*}
  \frac{\rho_i}{\rho_j}< \tilde{C}_{ij} \wedge \frac{1}{\tilde{C}_{ji}}\quad \text{and} \quad 
   \frac{\rho_j}{\rho_k}< \tilde{C}_{jk}\wedge \frac{1}{\tilde{C}_{kj}}.
 \end{equation*}
 \begin{enumerate}
  \item If \eqref{transicas1} holds, then 
  \begin{eqnarray*} S_{ik}&=& \rho_k \Big( \frac{\rho_i}{\rho_k}- \tilde{C}_{ik} \Big)= \rho_k \Big( \frac{\rho_i}{\rho_j}\frac{\rho_j}{\rho_k}- \tilde{C}_{ik} \Big)\\
  &< & \rho_k \Big( \Big(\tilde{C}_{ij}\wedge \frac{1}{\tilde{C}_{ji}}\Big) 
  \Big(\tilde{C}_{jk}\wedge \frac{1}{\tilde{C}_{kj}}\Big)- \tilde{C}_{ik} \Big)
   \leq  \rho_k \Big(\Big(C_2 \wedge \frac{1}{C_1}\Big)^2 - C_1\Big)\leq 0,
  \end{eqnarray*}
and
  \begin{eqnarray*} S_{ki}&=& \rho_i \Big( \frac{\rho_k}{\rho_i}- \tilde{C}_{ki} \Big)= 
  \rho_i \Big( \frac{\rho_k}{\rho_j}\frac{\rho_j}{\rho_i}- \tilde{C}_{ki} \Big)\\
  &> & \rho_i \Big( \Big(\tilde{C}_{kj}\vee \frac{1}{\tilde{C}_{jk}} \Big) \Big(\tilde{C}_{ji} \vee \frac{1}{\tilde{C}_{ij}} \Big)- \tilde{C}_{ki} \Big)
   \geq  \rho_k \Big(\Big(C_1 \vee \frac{1}{C_2}\Big)^2 - C_2\Big)\geq 0,
  \end{eqnarray*}
which implies that 
$$i \prec k.$$
  \item If \eqref{transicas2} holds, then 
one of the two previous inequalities is not satisfied. Hence either $i \prec k$ or $i = k$.
 \item Let us now assume that \eqref{transicas2} holds. We can choose $\eta>0$ such that 
 $$ \Big[\Big( C_2 \wedge \frac{1}{C_1} \Big)- \eta \Big]^2>C_1 \quad \text{and} \quad 
 \Big[\Big( C_2 \wedge \frac{1}{C_1} \Big)- \eta \Big]^{-2}<C_2.$$
 Assume now that 
 $$ \frac{\rho_i}{\rho_j}= \frac{\rho_j}{\rho_k}= \Big( C_2 \wedge \frac{1}{C_1} \Big)- \eta .$$
Then
 \begin{equation*}
  \frac{\rho_i}{\rho_j}- C_{2}<0< \frac{\rho_j}{\rho_i}- C_{1}, \quad 
\frac{\rho_j}{\rho_k}- C_{2}<0< \frac{\rho_k}{\rho_j}- C_{1}
  \quad \text{and} \quad 
   \frac{\rho_i}{\rho_k}- {C}_{1}>0> \frac{\rho_k}{\rho_i}- {C}_{2}.
 \end{equation*}
\end{enumerate}
This ends the proof of Lemma \ref{transivsnontransi}.
\end{proof}

We now present two examples of three dimensional Lotka-Volterra systems satisfying conditions of Proposition 
\ref{prolizards} and exhibiting different long time behaviours. In the first one, the solutions converge to a limit cycle 
with constant period whereas in the second one they converge to the unique globally attracting fixed point of the system.
Before these two examples, we need to give a definition and recall a result of Hofbauer and Sigmund.

\begin{defi}
 A matrix $A=(a_{ij})_{0\leq i ,j \leq 2} \in M_3(\R)$ is called Volterra-Lyapunov stable if there exist positive numbers $d_0,d_1,d_2$ such that
 $$ \underset{0\leq i,j \leq 2}{\sum} d_ia_{ij}n_in_j<0, \quad \forall n \neq 0. $$
\end{defi}

Let us introduce the matrix 
\[ A=\left( \begin{array}{ccc}
C_{0,0} & C_{0,1} & C_{0,2} \\
C_{1,0} & C_{1,1} & C_{1,2} \\
C_{2,0} & C_{2,1} & C_{2,2} \end{array} \right).\] 
Then we have:

\begin{theo}[Theorem 15.3.1 in \cite{hofbauer2003evolutionary}]\label{theoHS}
 If $-A$ is Volterra-Lyapunov stable then the Lotka-Volterra system \eqref{S2} has one globally stable fixed point.
\end{theo}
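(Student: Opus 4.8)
The plan is to produce a global strict Lyapunov function from the stabilizing weights $d_0,d_1,d_2$. Write $r_i := \beta_i-\delta_i$, so that \eqref{S2} reads $\dot{n}_i = n_i\big(r_i-(An)_i\big)$ with $A=(C_{i,j})$. Volterra-Lyapunov stability of $-A$ says precisely that $\sum_{i,j} d_i C_{i,j} w_i w_j>0$ for every $w\neq 0$; taking $Aw=0$ in this quadratic form forces $w=0$, so $A$ is invertible and $An=r$ has the unique solution $p:=A^{-1}r$. The first step I would carry out is to argue that, whenever the system admits an interior equilibrium, it must be this point $p\in int(\R_+^3)$, and that $p$ is then the only fixed point relevant to the dynamics on the open octant.

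Second, I would introduce the candidate
$$V(n) := \sum_{i=0}^{2} d_i\Big( n_i - p_i - p_i\log\frac{n_i}{p_i}\Big),$$
defined on $int(\R_+^3)$. Each summand equals $d_i p_i\big(\tfrac{n_i}{p_i}-1-\log\tfrac{n_i}{p_i}\big)$, which is nonnegative and vanishes only at $n_i=p_i$ by strict convexity of $x\mapsto x-1-\log x$; hence $V\geq 0$ with equality exactly at $n=p$, and $V\to+\infty$ on $\partial\R_+^3$ and as $\|n\|\to\infty$, so its sublevel sets are compact in the interior.

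Third comes the key computation. Since $\partial V/\partial n_i = d_i(n_i-p_i)/n_i$, differentiating along a trajectory gives
$$\dot{V}(n) = \sum_{i=0}^{2} d_i (n_i-p_i)\frac{\dot{n}_i}{n_i} = \sum_{i=0}^{2} d_i (n_i-p_i)\big(r_i-(An)_i\big),$$
and substituting $r_i=(Ap)_i$ yields
$$\dot{V}(n) = -\sum_{i,j} d_i C_{i,j}(n_i-p_i)(n_j-p_j).$$
With $w=n-p$, the Volterra-Lyapunov hypothesis gives $\dot{V}(n)=-\sum_{i,j} d_i C_{i,j} w_i w_j<0$ for all $n\neq p$, while $\dot{V}(p)=0$. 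Thus $V$ is a strict Lyapunov function, and the standard Lyapunov/LaSalle theorem delivers global asymptotic stability of $p$ on $int(\R_+^3)$.

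Finally, I would treat the case in which $A^{-1}r$ is not in the open octant, where the globally stable fixed point sits on a face. The argument then runs by induction on dimension, using that Volterra-Lyapunov stability passes to every principal submatrix (restrict the quadratic form to the corresponding coordinate subspace), so each subface carries its own globally attracting saturated equilibrium; one inspects the signs of the missing per-capita growth rates to single out the unique saturated equilibrium and adapts $V$ to the relevant face. I expect this boundary/saturation bookkeeping to be the main obstacle: the quadratic-form identity above settles the interior case almost mechanically, whereas locating the attractor and ruling out $\omega$-limit sets escaping to lower faces is the delicate part.
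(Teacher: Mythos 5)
The paper does not actually prove this statement: it is imported verbatim as Theorem 15.3.1 of \cite{hofbauer2003evolutionary}, so there is no internal proof to compare against. Your argument for the case where $p=A^{-1}r$ lies in the open octant is correct and complete, and it is essentially the classical proof given in that reference: the sign bookkeeping ($-A$ Volterra--Lyapunov stable meaning $\sum_{i,j} d_i C_{i,j} w_i w_j>0$ for $w\neq 0$), the invertibility of $A$, the properness of the weighted relative-entropy function $V$ on $int(\R_+^3)$, and the identity $\dot V=-\sum_{i,j} d_i C_{i,j}(n_i-p_i)(n_j-p_j)$ are all right, and Lyapunov's theorem then gives global asymptotic stability of $p$ on the open octant.

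The one genuine gap is the boundary case, which you only sketch. The induction on faces you propose does not by itself identify which face carries the attractor, nor does it exclude orbits whose $\omega$-limit sets straddle several faces. The standard way to close this (and the route taken in Hofbauer--Sigmund) is not an induction but a direct argument: Volterra--Lyapunov stability forces the associated linear complementarity problem to have a unique solution, i.e.\ there is a unique \emph{saturated} equilibrium $p$ (one with $r_i-(Ap)_i\leq 0$ for every $i$ with $p_i=0$); one then takes $V(n)=\sum_i d_i\,(n_i-p_i\log n_i)$ with the logarithmic terms dropped for the indices where $p_i=0$, and $\dot V$ splits into the same quadratic form plus the nonpositive contribution $\sum_{i:\,p_i=0} d_i n_i\,(r_i-(Ap)_i)$, after which LaSalle's invariance principle applies. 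That said, in the only place the paper invokes the theorem --- the second example illustrating Remark \ref{remarkRPS}, where the parameters satisfy the hypotheses of Proposition \ref{prolizards} and the system is permanent --- the relevant equilibrium is interior, so the part of the argument you did complete is the part that is actually used.
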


This theorem  
is needed to construct the second example in the following illustration:

\begin{proof}[Illustration of Remark \ref{remarkRPS}]
\begin{enumerate}
 \item In \cite{neumann2007continuous}, the authors model the RPS interactions of escherichia coli strains by the following 
system:
\begin{equation*}
\left\{\begin{array}{ll}
	\dot{n}_0=(\delta-\kappa_1n_0-\mu n_1-\mu n_2)n_0,\\
        \dot{n}_1=(\eta-\mu n_0-\kappa_3n_1-\mu n_2)n_1,\\
	\dot{n}_2=(\beta-(\mu+\gamma)n_0-\mu n_1-\mu n_2)n_2,
       \end{array}
\right.
 \end{equation*}
where all the parameters belong to $\R_+^*$. They
give an example for which the solutions converge to a limit cycle with constant period.
It corresponds to the following values of the parameters:
$$ \eta = \mu = 1, \quad   \kappa_1= \beta=2, \quad \kappa_2=1,75, \quad \gamma = 2,84, \quad 
\kappa_3= 0,844.. \quad \text{and} \quad  \delta = 1,156... $$
For these values of the parameters we have:
$$ S_{10}= 0,422..., \quad S_{01}= - 0,0288... $$
$$ S_{21}= 0,816..., \quad S_{12}= - 0,0143 $$
$$ S_{02}= 0,0131..., \quad S_{20}= - 0,220.... $$
These invasion fitnesses satisfy \eqref{cond1RPS} and 
$$ S_{21}>|S_{01}|, \quad S_{02}S_{21}>|S_{01}||S_{12}|\quad \text{and} \quad S_{02}S_{10}S_{21}>|S_{01}||S_{12}||S_{20}|  .$$
Hence we can choose $\alpha$ such that \eqref{cond2RPS} holds.

\item Recalling the definition of invasion fitnesses in \eqref{deffitinv} we get that $-A$ is 
Volterra-Lyapunov stable if there exist positive constants $d_0,d_1, d_2$ such that for every $n \neq 0$,
 $$ \underset{0\leq i,j \leq 2}{\sum} \frac{(\beta_i -\delta_i)d_i}{C_{i,i}}(\beta_i -\delta_i-S_{ij})
 \frac{C_{i,i}n_i}{\beta_i -\delta_i}\frac{C_{j,j}n_j}{\beta_j -\delta_j}>0, $$
which amounts to the existence of
positive constants $d_0,d_1, d_2$ such that for every $n \neq 0$,
 $$ \underset{0\leq i,j \leq 2}{\sum} d_i(\beta_i -\delta_i-S_{ij})
 n_in_j>0. $$
Expanding the sum and using the notation $\rho_i=\beta_i-\delta_i$ for $i \in \mathcal{E}$ we get the condition
\begin{multline*} 
 d_0\rho_0n_0^2+ d_1\rho_1n_1^2+ d_2\rho_2n_2^2+ 
 \Big[ d_0(\rho_0+|S_{01}|)+ d_1(\rho_1-S_{10}) \Big]n_0n_1+\\
 \Big[ d_0(\rho_0-S_{02})+ d_2(\rho_2+|S_{20}|) \Big]n_0n_2+
  \Big[ d_1(\rho_1+|S_{12}|)+ d_2(\rho_2-S_{21}) \Big]n_1n_2>0.
 \end{multline*}
Let us now make the following choice:
\begin{equation}\label{choixparaex2} \rho_i d_i =1, \quad i \in \mathcal{E}, 
\quad \frac{\rho_0-S_{02}}{\rho_0}=\frac{\rho_1-S_{10}}{\rho_1}=\frac{\rho_2-S_{21}}{\rho_1}=\eta, \quad \frac{|S_{01}|}{\rho_0}=\frac{|S_{12}|}{\rho_1}=
\frac{|S_{20}|}{\rho_2}=\eta, \end{equation}
where $0<\eta<1/2$.
The condition to satisfy becomes
\begin{multline*} 
 n_0^2+ n_1^2+ n_2^2+ \frac{1+2\eta}{2}(2n_0n_1+2n_0n_2+2n_1n_2)
 =  \frac{1-2\eta}{2}(n_0^2+ n_1^2+ n_2^2)+\frac{1+2\eta}{2}(n_0+n_1+n_2)^2 >0,
 \end{multline*}
which holds for every non null $n \in \R^3$.
The end of the proof consists in noticing that we can indeed choose the parameters as in \eqref{choixparaex2}. Assume for example that 
the $\rho_i$'s and the $C_{i,i}$'s are given.
Then it is enough to take
$$ C_{i+1,i}=\eta \frac{\rho_{i+1}}{\rho_i}C_{i,i},  \quad  C_{i,i+1}=(1+\eta) \frac{\rho_i}{\rho_{i+1}}C_{i+1,i+1}, \quad 
\text{where $i$ is modulo $2$}.  $$
 Finally it is easy to check that the conditions of Proposition \ref{prolizards} are satisfied.
 Applying \ref{theoHS} we get that the Lotka-Volterra system \eqref{S2} has one globally stable fixed point. This ends the construction of the second example.
 \end{enumerate}
\end{proof}

\section{Complete description of possible dynamics}\label{completedescr}

% $\tikz\node[circle,draw,inner sep=2pt,outer sep=1pt]{1};$
% $\pscirclebox[fillstyle=solid,fillcolor=red,linecolor=yellow]{\textcolor{white}{1}}$
% $\pscirclebox{\text{A}}$

We now give a complete description of the possible population dynamics. 
Sections \ref{annexeAss1} and \ref{annexeAss3} are dedicated to Assumptions \ref{ass1} and \ref{ass3}, respectively.
In Section \ref{annexeAss2} we explain how the dynamics under Assumptions \ref{asscondinitiale} and \ref{ass2} can be deduced from the dynamics of the process under 
Assumptions \ref{asscondinitiale} and \ref{ass1}.

\subsection{Assumption \ref{ass1}}\label{annexeAss1}

{In Figure \ref{arbresass1} and Table \ref{tableass1}, we assume that the $1$- and $2$-populations survive the first phase. This happens with a probability close to 
$S_{10}S_{20}/(\beta_1 \beta_2)$ (Lemma \ref{lemdureephase1ass1}) and is the case we are interested in, as otherwise there is no clonal interference and we are brought back
to the invasion of one mutant already studied in \cite{champagnat2006microscopic} and \cite{champagnat2011polymorphic}.
The behaviour of the population process after the first phase depends on the relations between the ecological parameters of the different individual types. We describe the different 
conditions which discriminate between different scenari in Figure \ref{arbresass1}, and list them in their chronological order of appearance. We also indicate the phase during which the conditions 
have an impact on the population process behaviour. 

In Table \ref{tableass1} we describe the "final state" of the population processes under the conditions $\ovalbox{A}$ to $\ovalbox{K}$.
For sake of simplicity we call "final state" in this setting the element of $\R_+^3$
\begin{equation}\label{defFS} FS:= \bigcap_{\eps>0} \{ n \in \R_+^3, \exists \beta(\eps),V(\eps)>0, \liminf_{K \to \infty}\P^{(1,2)}(n \in FS(\eps,K,\beta(\eps),V(\eps)))>0 \} ,\end{equation}
when this intersection is non empty, otherwise we call "final state" the long time behaviour of the three dimensional Lotka-Volterra system close to the rescaled population 
process once the three population types have a size of order $K$. In this case we indicate the corresponding class of the dynamical system in the Zeeman representation (Figure \ref{classes1to33}) and we write "$012$" when the invasion fitness 
signs do not allow to discriminate between a cyclical or stable coexistence of the three types of populations.

We have also indicated in Table \ref{tableass1} the durations of the sweep. They have to be understood in the sense of Lemma \ref{lemdureephase1ass1} for instance, 
when the final state $FS$ described in \eqref{defFS} is non empty. They are good approximations of the 
durations with a probability one up to a constant times $\eps$. When $FS$ is empty, these durations correspond to the time needed for the three populations to hit a size of order $K$. 

}

\begin{figure}[h]
  \centering
% \psfrag{S02pos}{$S_{02}>0$}
% \psfrag{P2}{phase $2$}
% \psfrag{P3}{phase $3$}
% \psfrag{P4}{phase $4$}
% \psfrag{P5}{phase $5$}
% \psfrag{P6}{phase $6$}
% \psfrag{A}{$\pscirclebox{\text{A}}$}
% \psfrag{B}{$\pscirclebox{\text{B}}$}
% \psfrag{C}{$\pscirclebox{\text{C}}$}
% \psfrag{D}{$\pscirclebox{\text{D}}$}
% \psfrag{E}{$\pscirclebox{\text{E}}$}
% \psfrag{F}{$\pscirclebox{\text{F}}$}
% \psfrag{G}{$\pscirclebox{\text{G}}$}
% \psfrag{H}{$\pscirclebox{\text{H}}$}
% \psfrag{I}{$\pscirclebox{\text{I}}$}
% \psfrag{J}{$\pscirclebox{\text{J}}$}
% \psfrag{K}{$\pscirclebox{\text{K}}$}
% \psfrag{S02neg}{$S_{02}<0$}
% \psfrag{Q102pos}{$S_{102}>0$}
% \psfrag{Q102neg}{$S_{102}<0$}
% \psfrag{alpha2}{\scriptsize{$\alpha S_{10} < \frac{S_{02}S_{21}}{|S_{12}||S_{01}|}-1$}}
% \psfrag{alpha3}{\scriptsize{$\alpha S_{10} > \frac{S_{02}S_{21}}{|S_{12}||S_{01}|}-1$}}
% \psfrag{alpha1}{\scriptsize{$\alpha < \frac{1}{S_{10}}+\frac{1}{S_{20}}(\frac{S_{21}}{|S_{01}|}-1)$}}
% \psfrag{alpha4}{\scriptsize{$\alpha > \frac{1}{S_{10}}+\frac{1}{S_{20}}(\frac{S_{21}}{|S_{01}|}-1)$}}
% \psfrag{S12neg}{$S_{12}<0$}
% \psfrag{S12pos}{$S_{12}>0$}
% \psfrag{S01neg}{$S_{01}<0$}
% \psfrag{S201neg}{$S_{201}<0$}
% \psfrag{S201pos}{$S_{201}>0$}
% \psfrag{S012neg}{$S_{012}<0$}
% \psfrag{S012pos}{$S_{012}>0$}
% \psfrag{S12neg}{$S_{12}<0$}
% \psfrag{S12pos}{$S_{12}>0$}
% \psfrag{S01pos}{$S_{01}>0$}
% \psfrag{S21pos}{$S_{21}>0$}
% \psfrag{S21neg}{$S_{21}<0$}
  \includegraphics[width=10cm,height=13cm]{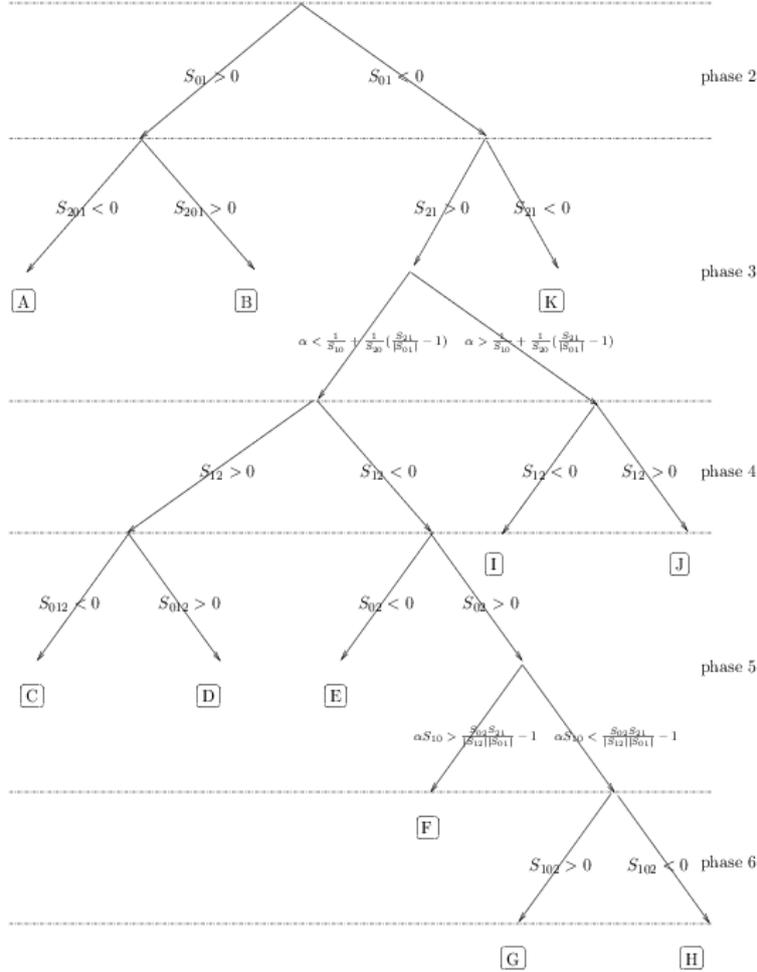}                 
  \caption{Different dynamics under Assumption \ref{ass1}. 
See Section \ref{annexeAss1} for the caption}
  \label{arbresass1}
\end{figure}

\begin{table}
$$\begin{array}{|c|c|c|}
\hline
 \text{Case} & \text{Duration of the sweep:} \log K . & \text{Final state} \\
 \hline
 {\scriptstyle \ovalbox{A}} & {\scriptstyle \frac{1}{S_{10}}} &{\scriptstyle (\bar{n}_{01}^{(0)},\bar{n}_{01}^{(1)},0)} \\
 \hline
{\scriptstyle  \ovalbox{B}} & {\scriptstyle\frac{1}{S_{10}}+\frac{1}{S_{201}}\Big(1-S_{20}\Big( \frac{1}{S_{10}}-\alpha \Big)\Big) } & {\scriptstyle
\begin{array}{l}
  {\scriptstyle S_{21}<0,S_{12}<0,S_{02}<0,\quad (0,0,\bar{n}_2)\ \text{(8)}}\\
 {\scriptstyle S_{21}>0,S_{12}<0,S_{02}<0,\quad 012\ \text{(29)}}\\
 {\scriptstyle S_{21}>0,S_{12}<0,S_{02}>0,S_{102}>0\quad 012\ \text{(31)}}\\
 {\scriptstyle S_{21}>0,S_{12}<0,S_{02}>0,S_{102}<0\quad (\bar{n}_{02}^{(0)},0,\bar{n}_{02}^{(2)})\ \text{(10)}}\\
  {\scriptstyle S_{21}>0,S_{12}>0,S_{02}<0,S_{012}<0\quad (0,\bar{n}_{12}^{(1)},\bar{n}_{12}^{(2)})\ \text{(9)}}\\
 {\scriptstyle S_{12}>0,S_{02}<0, \left\{\begin{array}{l} {\scriptstyle S_{21}<0 \text{ or}}\\ {\scriptstyle S_{21}>0, S_{012}>0}
\end{array}\right.}
\quad  {\scriptstyle\text{012}\ \text{(29 or 31)}}\\
 {\scriptstyle S_{21}>0,S_{12}>0,S_{02}>0,\quad (\bar{n}_{012}^{(0)},\bar{n}_{012}^{(1)},\bar{n}_{012}^{(2)})\ \text{(33)}}
\end{array}}\\
\hline 
{\scriptstyle\ovalbox{C} }&{\scriptstyle\frac{1}{S_{10}}+\frac{1}{S_{21}}\Big( \frac{1}{S_{10}}-\alpha \Big)} & {\scriptstyle(0,\bar{n}_{12}^{(1)},\bar{n}_{12}^{(2)})}\\
\hline 
{\scriptstyle\ovalbox{D}} & {\scriptstyle\frac{1}{S_{10}}+\frac{1}{S_{21}}\Big(1+\frac{|S_{01}|}{S_{012}}\Big)\Big( \frac{1}{S_{10}}-\alpha \Big)}
 & {\scriptstyle
\begin{array}{l}
 {\scriptstyle S_{102}<0,\quad (\bar{n}_{02}^{(0)},0,\bar{n}_{02}^{(2)})\ \text{(10)}}\\
 {\scriptstyle S_{102}>0,\quad \text{012}\ \text{(31)}}\end{array}}\\
\hline 
{\scriptstyle\ovalbox{E}} & {\scriptstyle\frac{1}{S_{10}}+\frac{1}{S_{21}}\Big(1-S_{20}\Big( \frac{1}{S_{10}}-\alpha \Big)\Big)}  &{\scriptstyle (0,0,\bar{n}_{2})}\\
\hline 
{\scriptstyle\ovalbox{F}} & {\scriptstyle\frac{1}{S_{10}}+\frac{1}{S_{21}}\Big(1+\frac{|S_{01}|}{S_{02}}\Big)\Big( \frac{1}{S_{10}}-\alpha \Big)} &{\scriptstyle (\bar{n}_{02}^{(0)},0,\bar{n}_{02}^{(2)})}\\
\hline 
{\scriptstyle\ovalbox{G}} & {\scriptstyle\frac{1}{S_{10}}+\frac{1}{S_{21}}\Big(1+\frac{|S_{01}|}{S_{02}}+\frac{|S_{01}|S|_{12}|}{S_{02}S_{102}}\Big)\Big( \frac{1}{S_{10}}-\alpha \Big)} & {\scriptstyle
\text{012}\ \text{(29)}}\\
\hline 
{\scriptstyle\ovalbox{H}} & {\scriptstyle\frac{1}{S_{10}}+\frac{1}{S_{21}}\Big(1+\frac{|S_{01}|}{S_{02}}\Big)\Big( \frac{1}{S_{10}}-\alpha \Big)} &{\scriptstyle (\bar{n}_{02}^{(0)},0,\bar{n}_{02}^{(2)})}\\
\hline 
{\scriptstyle\ovalbox{I}} & {\scriptstyle\frac{1}{S_{10}}+\frac{1}{S_{21}}\Big(1-S_{20}\Big( \frac{1}{S_{10}}-\alpha \Big)\Big) }  &{\scriptstyle (0,0,\bar{n}_{2})}\\
\hline 
{\scriptstyle\ovalbox{J}} & {\scriptstyle\frac{1}{S_{10}}+\frac{1}{S_{21}}\Big( \frac{1}{S_{10}}-\alpha \Big) } &{\scriptstyle (0,\bar{n}_{12}^{(1)},\bar{n}_{12}^{(2)})}\\
\hline 
{\scriptstyle\ovalbox{K}} & {\scriptstyle\frac{1}{S_{10}}} &{\scriptstyle (0,\bar{n}_{1},0)}\\
\hline 
\end{array}$$
\caption{Possible dynamics under Assumption \ref{ass1}. Encircled letters correspond to these in Figure \ref{arbresass1}.
See Section \ref{annexeAss1} for the caption.}
\label{tableass1}
\end{table}

\subsection{Assumption \ref{ass2}} \label{annexeAss2}

Let us now explain how we deduce the possible dynamics under Assumption \ref{ass2} from the possible dynamics under Assumption \ref{ass1}.
Applying Lemmas \ref{lemphase11} to \ref{lemdureephase1ass2} we get the dynamics represented in Figure \ref{figAss1toAss2} with 
a probability close to $S_{10}S_{20}/(\beta_1\beta_2)$ under Assumptions (\ref{asscondinitiale} and)
\ref{ass1} and \ref{ass2}, respectively. We deduce that to obtain an equivalent 
of Figure \ref{tableass1} it is enough to:
\begin{enumerate}
 \item Interchange the roles of the $1$- and $2$-populations,
 \item Replace $\alpha$ by $-\alpha$ in the conditions of Figure \ref{arbresass1} and in the duration of the sweep
 \item Add $\alpha\log K$ to the duration of the sweep.
\end{enumerate}
We see that we do not get new long time behaviours by comparison with Assumptions \ref{asscondinitiale} and \ref{ass1}.
This is why we did not detail this case.

\begin{figure}[h]
  \centering
  \includegraphics[width=12cm,height=5cm]{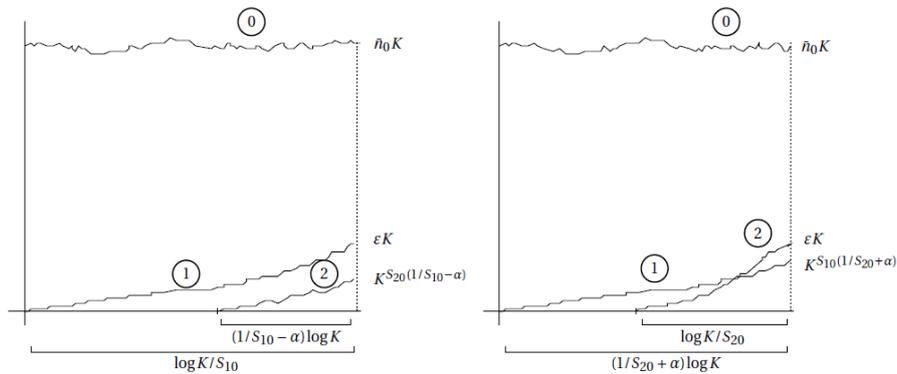}                 
  \caption{From Assumption \ref{ass1} to Assumption \ref{ass2} (see Section \ref{annexeAss2})}
  \label{figAss1toAss2}
\end{figure}

\subsection{Assumption \ref{ass3}}\label{annexeAss3}

Figure \ref{arbresass2} and Table \ref{tableass2} are the analogues of Figure \ref{arbresass1} and Table \ref{tableass1} for Assumption \ref{ass3}.
Here we only assume that the $1$-population survives the first phase (probability close to $S_{10}/\beta_1$ according to Lemma \ref{lemphase11}), 
as the second mutation occurs during the third phase. The captions are the same, except that we add in Table \ref{tableass2} a final 
state "cycles Rock-Paper-Scissors" which corresponds to the class 27 in Zeeman's classification (Proposition \ref{prolizards}).

\begin{figure}[h]
  \centering
  \includegraphics[width=10cm,height=13cm]{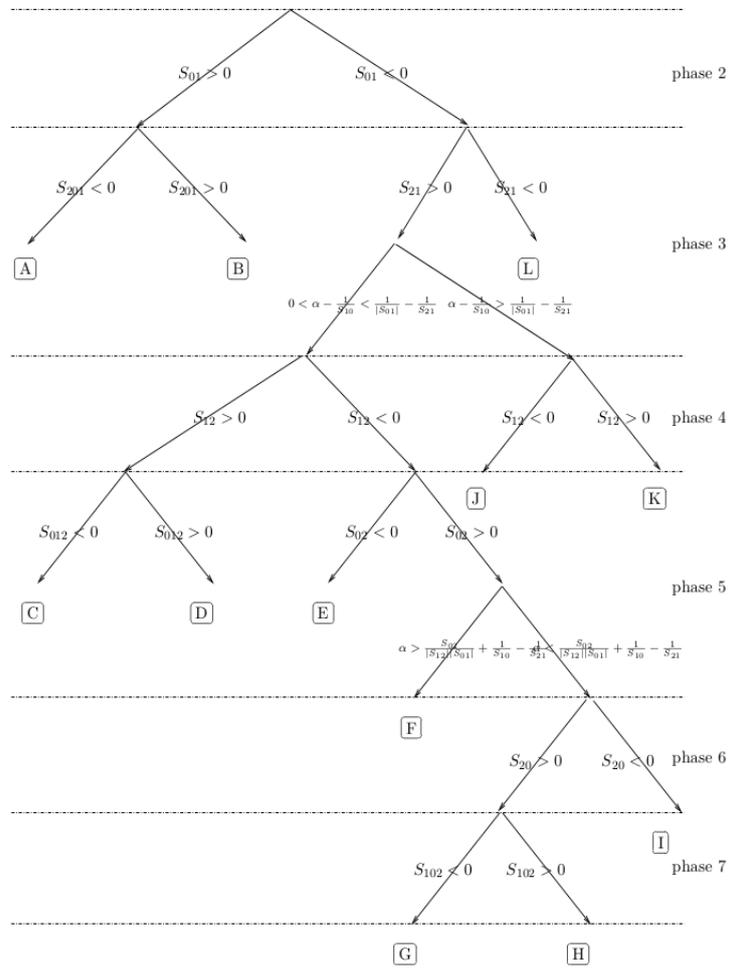}                 
  \caption{Different dynamics under Assumption \ref{ass3}. Same legend as in Figure \ref{arbresass1}}
  \label{arbresass2}
\end{figure}

\begin{table}
$$\begin{array}{|c|c|c|}
\hline
 \text{Case} & \text{Duration of the sweep:} \log K . & \text{Final state} \\
 \hline
  {\scriptstyle \ovalbox{A}} &  {\scriptstyle\frac{1}{S_{10}}} &  {\scriptstyle(\bar{n}_{01}^{(0)},\bar{n}_{01}^{(1)},0)} \\
 \hline
  {\scriptstyle \ovalbox{B}}& \alpha+\frac{1}{S_{201}}  & 
\begin{array}{l}
 {\scriptstyle S_{12}<0,S_{02}<0, S_{20}>0,\quad (0,0,\bar{n}_2)\quad \text{(7 or 8)}}\\
 {\scriptstyle S_{21}>0,S_{12}>0,S_{012}<0,\quad (0,\bar{n}_{12}^{(1)},\bar{n}_{12}^{(2)})\ \text{(9 to 12)}}\\
 {\scriptstyle S_{20}>0,S_{02}>0,S_{102}<0,\quad (\bar{n}_{02}^{(0)},0,\bar{n}_{02}^{(2)})\ \text{(9 to 12)}}\\
 {\scriptstyle S_{12}>0,S_{02}<0, S_{20}>0,\quad \text{012}\ \text{(29 or 31)}}\\
 {\scriptstyle S_{ij}>0,\forall (i,j)\in \{0,1,2\}^2,\quad (\bar{n}_{012}^{(0)},\bar{n}_{012}^{(1)},\bar{n}_{012}^{(2)})\ \text{(33)}}
\end{array}\\
\hline 
 {\scriptstyle \ovalbox{C}} &  {\scriptstyle\alpha+\frac{1}{S_{201}}} &  {\scriptstyle(0,\bar{n}_{12}^{(1)},\bar{n}_{12}^{(2)})}\\
\hline 
 {\scriptstyle \ovalbox{D}} & {\scriptstyle \alpha+\frac{1}{S_{21}}+\frac{|S_{01}|}{S_{012}}\Big( \alpha-\frac{1}{S_{10}}+\frac{1}{S_{21}} \Big)}
 & \begin{array}{l}
 {\scriptstyle S_{20}>0,S_{102}<0,\quad (\bar{n}_{02}^{(0)},0,\bar{n}_{02}^{(2)})\ \text{(10)}}\\
 {\scriptstyle S_{20}>0,S_{102}>0,\quad \text{012}\ \text{(31)}}\\
 {\scriptstyle S_{20}<0,\quad \text{012}\ \text{(29)}}
\end{array}\\
\hline 
 {\scriptstyle \ovalbox{E}} &  {\scriptstyle\alpha+\frac{1}{S_{201}}} &  {\scriptstyle(0,0,\bar{n}_{2})}\\
\hline 
 {\scriptstyle \ovalbox{F}} &  {\scriptstyle \alpha+\frac{1}{S_{21}}+\frac{|S_{01}|}{S_{02}}\Big( \alpha-\frac{1}{S_{10}}+\frac{1}{S_{21}} \Big)} & 
\begin{array}{l}
 {\scriptstyle S_{20}>0,\quad (\bar{n}_{02}^{(0)},0,\bar{n}_{02}^{(2)})}\\
 {\scriptstyle S_{20}<0,\quad (\bar{n}_0,0,0)}
\end{array}\\
\hline 
 {\scriptstyle \ovalbox{G}} &  {\scriptstyle \alpha+\frac{1}{S_{21}}+\frac{|S_{01}|}{S_{02}}\Big( \alpha-\frac{1}{S_{10}}+\frac{1}{S_{21}} \Big) } & 
 {\scriptstyle(\bar{n}_{02}^{(0)},0,\bar{n}_{02}^{(2)})}\\
\hline 
 {\scriptstyle \ovalbox{H}} & 
{\scriptstyle\alpha+\frac{1}{S_{21}}+\frac{|S_{01}|}{S_{02}}\Big(1+\frac{|S_{12}|}{S_{102}}\Big)\Big( \alpha-\frac{1}{S_{10}}+\frac{1}{S_{21}} \Big)}  & 
{\scriptstyle\text{012}\ \text{(29)}}\\
\hline 
 {\scriptstyle \ovalbox{I}} & {\scriptstyle \alpha+\frac{1}{S_{21}}+\frac{|S_{01}|}{S_{02}}\Big(1+\frac{|S_{12}|}{S_{10}}\Big)\Big( \alpha-\frac{1}{S_{10}}+\frac{1}{S_{21}} \Big) } & 
 {\scriptstyle \text{cycles Rock-Paper-Scissors}}\\
\hline 
 {\scriptstyle \ovalbox{J}} &{\scriptstyle \alpha+\frac{1}{S_{201}}}  & {\scriptstyle(0,0,\bar{n}_{2})}\\
\hline 
 {\scriptstyle \ovalbox{K}} & {\scriptstyle\alpha+\frac{1}{S_{201}}} & {\scriptstyle(0,\bar{n}_{12}^{(1)},\bar{n}_{12}^{(2)})}\\
\hline 
 {\scriptstyle \ovalbox{L}} & {\scriptstyle\frac{1}{S_{10}}} & {\scriptstyle(0,\bar{n}_{1},0)}\\
\hline 
\end{array}$$
\caption{Possible dynamics under Assumption \ref{ass3}. Encircled letters correspond to these in Figure \ref{arbresass2}.
See Section \ref{annexeAss3} for the caption.}
\label{tableass2}
\end{table}

\vspace{.5cm}
{\bf Acknowledgements:} {\sl
The authors would like to thank Olivier Tenaillon who suggested this research subject several years ago, and Sylvie Méléard 
for her comments. This work was partially funded by project MANEGE "Modèles Aléatoires en Ecologie,
Génétique et Evolution"
of the French national
research agency ANR-09-BLAN-0215 and Chair "Modélisation Mathémathique et Biodiversité"
of Veolia Environnement - Ecole,Polytechnique - Muséum National d'Histoire Naturelle -
Fondation X and the French national research agency ANR-11-BSV7- 013-03.}

%  \newpage
 \clearpage

\bibliographystyle{abbrv}
% \bibliography{biblio_clonal}

\end{document}